\newtheorem{Theorem}{Theorem}[section]
\newtheorem{proposition}[Theorem]{Proposition}
\newtheorem{lemma}[Theorem]{Lemma}
\newtheorem{remark}[Theorem]{Remark}
\begin{document}

\title{Automorphic Lie algebras and modular forms}

\author{V. Knibbeler}\address{Department of Mathematical Sciences,
Loughborough University, Loughborough LE11 3TU, UK}
\email{V.Knibbeler@lboro.ac.uk}

\author{S. Lombardo}\address{Department of Mathematical Sciences,
Loughborough University, Loughborough LE11 3TU, UK}
\email{S.Lombardo@lboro.ac.uk}

\author{A.P. Veselov}
\address{Department of Mathematical Sciences,
Loughborough University, Loughborough LE11 3TU, UK and
\newline 
Faculty of Mechanics and Mathematics, Moscow State University and
Steklov Mathematical Institute, Moscow, Russia}
\email{A.P.Veselov@lboro.ac.uk}

\newcommand{\mc}[1]{\mathcal{#1}}
\newcommand{\mb}[1]{\mathbb{#1}}
\newcommand{\mf}[1]{\mathfrak{#1}}

\newcommand{\modaut}[2][(\tau)]{\Phi_{#2}#1}

\def\beq#1#2\eeq{%
        \begin{equation}%
        \label{#1}%
            #2%
        \end{equation}%
    }
    
\newcommand{\ba}{\bar{a}}
\newcommand{\backspace}{\!\!\!} 
\newcommand{\n}{n_4}
\newcommand{\nn}{n_6}
\newcommand{\om}{\omega_4^2}
\newcommand{\omm}{\omega_6^2}    

\newcommand{\rd}{\textrm{d}}

\newcommand{\rg}{\Gamma}
\newcommand{\zn}[1]{\mb{Z}/#1\mb{Z}}

\newcommand{\SLNZ}[1][2]{\mathrm{SL}({#1},\mb{Z})}
\newcommand{\SLNC}[1][n]{\mathrm{SL}({#1},\mb{C})}
\newcommand{\slnc}[1][n]{\mf{sl}({#1},\mb{C})}

\newcommand{\mg}{{\Gamma(1)}}
\newcommand{\uh}{\mathbb H^2}
\newcommand{\mfz}[1][\rg]{M^!_\ast(#1)}
\newcommand{\mfk}[2][\rg]{M^!_{#2}(#1)}
\newcommand{\mfpz}[1][\rg]{M_\ast(#1)}
\newcommand{\mfpk}[2][\rg]{M_{#2}(#1)}

\newcommand{\dk}[1]{\rd\tau^{\frac{#1}{2}}}
\newcommand{\dmk}[1]{\rd\tau^{-\frac{#1}{2}}}
\newcommand{\hol}{\mc{F}}

\newcommand{\aliaz}[1][\rg]{\mf{M}^!_\ast(\mf{g},{#1},\rho)}
\newcommand{\alia}[1][\rg]{\mf{M}^!_0(\mf{g},{#1},\rho)}
\newcommand{\aliak}[2][\rg]{\mf{M}^!_{#2}(\mf{g},{#1},\rho)}
\newcommand{\aliavar}[4]{\mf{M}^!_{#1}(#2,#3,#4)}
\newcommand{\aliapz}[1][\rg]{\mf{M}_\ast(\mf{g},{#1},\rho)}
\newcommand{\aliap}[1][\rg]{\mf{M}_0(\mf{g},{#1},\rho)}
\newcommand{\aliapk}[2][\rg]{\mf{M}_{#2}(\mf{g},{#1},\rho)}
\newcommand{\aliapvar}[4]{\mf{M}_{#1}(#2,#3,#4)}

\newcommand{\vvmfz}[1][\rho]{M^!_\ast(#1)}
\newcommand{\vvmf}[1][\rho]{M^!_0(#1)}
\newcommand{\vvmfk}[2][\rho]{M^!_{#2} ( {#1} )}
\newcommand{\vvmfpz}[1][\rho]{M_\ast( {#1} )}
\newcommand{\vvmfp}[1][\rho]{M_0(#1)}
\newcommand{\vvmfpk}[2][\rho]{M_{#2} ( {#1} )}

\newcommand{\roots}{\mc{R}}
\newcommand{\sroots}{\Delta}

\newcommand{\Sym}{\textrm{Sym}}
\newcommand{\Id}{\mathrm{Id}}
\newcommand{\id}{\mathrm{id}}
\newcommand{\tr}{\mathrm{tr}\,}
\newcommand{\im}{\mathrm{Im }} 
\newcommand{\coker}{\textrm{coker}\,} 
\newcommand{\GL}{\mathrm{GL}}
\newcommand{\SL}{\mathrm{SL}}
\newcommand{\SO}{\mathrm{SO}}
\newcommand{\NSO}{\mathrm{O}}
\newcommand{\Sp}{\mathrm{Sp}}
\newcommand{\PSL}{\mathrm{PSL}}
\newcommand{\PSO}{\mathrm{PSO}}
\newcommand{\PGL}{\mathrm{PGL}}
\newcommand{\PG}{\mathrm{P}\mc{G}}
\newcommand{\Hom}{\mathrm{Hom}}
\newcommand{\End}{\mathrm{End}}
\newcommand{\Aut}[1]{\mathrm{Aut}\!\left(#1 \right)}
\newcommand{\Int}[1]{\mathrm{Int}\!\left(#1 \right)}
\newcommand{\Der}[1]{\mathrm{Der}\!\left(#1 \right)}
\newcommand{\Ad}{\mathrm{Ad}}
\newcommand{\ad}{\mathrm{ad}}
\newcommand{\Irr}{\mathrm{Irr}}
\newcommand{\lcm}{\mathrm{\,lcm}}
\newcommand{\codim}{\mathrm{codim}\,}
\newcommand{\diag}{\mathrm{diag}}
\newcommand{\rank}{\mathrm{rank}\,}
\newcommand{\res}{\mathrm{res}\,}
\newcommand{\On}{\mf{O}}

\newcommand\colorchain{gray}
\newcommand\colorp{red}
\newcommand\colorm{blue}
\tikzstyle{root}=[scale=0.5,shape=circle ]
\tikzstyle{2ndroot}=[scale=0.8,shape=circle ]
\tikzstyle{cross}=[scale=0.15,shape=circle, fill]
\tikzstyle{chain}=[color=\colorchain,->,shorten >=8pt,shorten <=8pt, >=stealth]
\tikzstyle{schain}=[color=\colorchain,-,shorten >=4pt,shorten <=4pt]
\tikzstyle{cochainp}=[thick, color=\colorp,->,shorten >=8pt,shorten <=8pt, >=stealth]
\tikzstyle{cochainm}=[thick, color=\colorm,->,shorten >=8pt,shorten <=8pt, >=stealth]
\tikzstyle{scochainp}=[color=\colorp,-,shorten >=3pt,shorten <=3pt]
\tikzstyle{scochainm}=[color=\colorm, dashed,-,shorten >=3pt,shorten <=3pt]
\newcommand{\scaleAA}{0.26}
\newcommand{\scaleBB}{1.0}
\newcommand{\scaleGG}{1.3}
\newcommand{\dangle}{10}
\newcommand{\bend}{15}

\newcommand{\dynkinscale}{0.9}
\newcommand{\dynkintablescale}{0.9}
\newcommand{\dynkinfont}{\footnotesize}
\tikzstyle{dynkinnode}=[draw, color=blue, shape=circle,minimum size=3.5 pt,inner sep=0]
\tikzstyle{ldynkinnode}=[draw, color=blue, shape=circle,minimum size=3.5 pt,inner sep=0]
\tikzstyle{sdynkinnode}=[draw, color=red, shape=circle,minimum size=3.5 pt,inner sep=0]
\tikzstyle{mdynkinnode}=[draw, color=magenta, shape=circle,minimum size=3.5 pt,inner sep=0]
\tikzstyle{fdynkinnode}=[draw, color=blue, shape=circle,minimum size=3.5 pt,inner sep=0,fill=black]
\tikzstyle{fldynkinnode}=[draw, color=blue, shape=circle,minimum size=3.5 pt,inner sep=0,fill=black]
\tikzstyle{fsdynkinnode}=[draw, color=red, shape=circle,minimum size=3.5 pt,inner sep=0,fill=black]
\tikzstyle{fmdynkinnode}=[draw, color=magenta, shape=circle,minimum size=3.5 pt,inner sep=0,fill=black]
\tikzstyle{brace}=[ decorate, decoration={brace, amplitude=5pt}]
\tikzstyle{mbrace}=[decorate, decoration={brace, amplitude=5pt, mirror}]

\begin{abstract}
We introduce and study certain hyperbolic versions of automorphic Lie algebras related to the modular group. 
Let $\rg$ be a finite index subgroup of $\SLNZ[2]$ with an action on a complex simple Lie algebra $\mathfrak g$, which can be extended to $\SLNC[2]$. We show that the Lie algebra of the corresponding $\mf{g}$-valued modular forms is isomorphic to the extension of $\mf{g}$ over the usual modular forms. This establishes a modular analogue of a well-known result by Kac on twisted loop algebras. The case of principal congruence subgroups $\Gamma(N), \, N\leq 6$ are considered in more details in relation to the classical results of Klein and Fricke and the celebrated Markov Diophantine equation. We finish with a brief discussion of the extensions and representations of these Lie algebras.
\end{abstract}

\maketitle

\section{Introduction}
Let $\mathfrak g$ be a simple finite-dimensional complex Lie algebra and $\mathfrak g \otimes_{\mathbb C} \mathbb C[z,z^{-1}]$ be its 
{\it loop algebra}
consisting of Laurent polynomials of $z$ with values in $\mathfrak g.$ It can be considered geometrically as the Lie algebra of Laurent polynomial maps
$$
f: \mathbb C^* \to \mathfrak g,
$$
where $\mathbb C^*=\mathbb C \setminus 0$, which can be considered as a complex version of $S^1.$ 

These infinite-dimensional Lie algebras play a very important role in the theory of $\mathbb Z$-graded Lie algebras initiated in 1968 by Victor Kac \cite{kac1968simple}.
They have a famous unique central extension which together with the derivation $z\frac{\mathrm d}{\mathrm d z}$ form the important class of the affine Lie algebras, but for us the starting point will be the loop algebra itself. It has the following natural twisted versions.

Let $\sigma$ be an automorphism of $\mathfrak g$ with $\sigma^m=\id$ and consider the twisted loop algebra $\mc{L}(\mf{g},\sigma,m)$ consisting of equivariant maps
$f: \mathbb C^* \to \mathfrak g,$ meaning
$$
f(\varepsilon z)=\sigma f(z), \quad \varepsilon^m=1.
$$ 
Kac proved that for every inner automorphism $\sigma$ the corresponding twisted Lie algebra is isomorphic to the untwisted one
as $\mathbb Z$-graded Lie algebras:
\beq{kac}
\mc{L}(\mf{g},\sigma,m) \cong \mc{L}(\mf{g},\id,1). 
\eeq
This important result implies that the isomorphism class of $\mc{L}(\mf{g},\sigma,m)$ only depends on the connected component of the automorphism group of $\mf{g}$ containing $\sigma$ \cite[Theorem 1]{kac1969automorphisms}, \cite[Proposition 8.5]{kac1990infinite}.

The isomorphism (\ref{kac}) can be rewritten in the suggestive form 
\[\left(\mathfrak g \otimes_{\mathbb C} \mathbb C[z,z^{-1}]\right)^{\zn{m}}\cong \mathfrak g \otimes_{\mathbb C} \mathbb C[z,z^{-1}]^{\zn{m}}\] where $\zn{m}$ acts on $\mathbb C[z,z^{-1}]$ by $z\mapsto\varepsilon^{-1} z $. This isomorphism is constructed for every simple Lie algebra $\mf{g}$ and any inner automorphism $\sigma$ of $\mf{g}$ in \cite[Proposition 8.5]{kac1990infinite}. Since $\mathbb C[z,z^{-1}]^{\zn{m}}\cong \mathbb C[z,z^{-1}]$, this leads to (\ref{kac}).

The {\it automorphic Lie algebras} are natural generalisations of twisted loop algebras, where $\mathbb C^*$ is replaced by a Riemann surface $X$ and the cyclic group $\zn{m}$ is replaced by a discrete group $\rg$ acting by Lie algebra automorphisms on $\mathfrak g$ and by holomorphic automorphisms on $X.$ 
The study of these Lie algebras  for $X$ being the punctured Riemann sphere was initiated by Lombardo and Mikhailov \cite{lombardo2004reductions, lombardo2005reduction}, 
who were motivated by the theory of integrable systems. The systematic classification for the same $X$ and $\mathfrak g=\slnc[2]$ was carried out by Lombardo and Sanders \cite{lombardo2010on},
and for $\mathfrak g=\slnc$ by Knibbeler, Lombardo and Sanders \cite{knibbeler2017higher}.

It is interesting to note that the first non-loop example of automorphic Lie algebras was introduced by Onsager in his pioneering  1944 paper \cite{onsager1944crystal} 
on the exact solution of the famous Ising model, although it was recognised only relatively recently \cite[Proposition 1]{roan1991onsager}. 
The Onsager Lie algebra corresponds to $X=\mathbb C^*, \, \mathfrak g=\slnc[2]$ and $\rg=\zn{2}$ acting on $X$ with the involution $z \mapsto z^{-1}$ and on $\slnc[2]$ with any involution. 

The Onsager algebra also provides a simple example for which the analogue of (\ref{kac}) does not hold: \[(\mf{g} \otimes_{\mathbb C} \mb{C}[z,z^{-1}])^\rg\ncong \mf{g} \otimes_{\mathbb C} \mb{C}[z,z^{-1}]^\rg.\] To see this, one can for instance check that the right hand side is perfect, whereas the left hand side is not. More generally, for a punctured compact Riemann surface $X$ we have $(\mf{g} \otimes_{\mathbb C} \mc{O}_X)^\rg\ncong \mf{g}\otimes_{\mathbb C}\mc{O}_X^\rg$ whenever $X\rightarrow \rg\backslash X$ contains an orbifold point $x_0$, granted $\rg_{x_0}$ acts nontrivially on $\mf{g}$ (see  \cite{duffield2020wild} for details).  
In this sense, (\ref{kac}) is a rare phenomenon.

In this paper we consider the first hyperbolic examples of automorphic Lie algebras, when $\rg$ is a finite index subgroup of the modular group $\SLNZ[2]$, acting on the upper half plane $X=\uh=\{\tau \in \mathbb C: \im \,\tau>0\}$ by M\H{o}bius transformations \[\begin{pmatrix}a&b\\c&d\end{pmatrix}\tau=\frac{a\tau+b}{c \tau+d},\quad \begin{pmatrix}a&b\\c&d\end{pmatrix}\in\rg,\quad \tau\in\uh\] and on $\mf{g}$ via some representation $\rho:\rg\rightarrow\Aut{\mf{g}}.$

The classical theory (see e.g. \cite{bruinier2008the,rankin1977modular}) suggests that in this case one should consider not only the modular functions $f(\tau)$, but also the modular forms $f(\tau)\dk{k}.$

Let $\mfpz=\bigoplus_{k\in\mb{Z}}\mfpk{k}$ be the algebra of the holomorphic modular forms for the group $\rg$, and $\mfz=\bigoplus_{k\in\mb{Z}}\mfk{k}$ be the algebra of the corresponding {\it weakly holomorphic} modular forms with possible poles at the cusps.

Consider the following Lie algebra of  $\mf{g}$-valued modular forms 
\[\aliapz=\bigoplus_{k\in\mb{Z}}\aliapk{k}\]
where the weight $k$ subspace $\aliapk{k}$ consists of the holomorphic maps $f: \uh \to \mf{g}$ satisfying the property
\beq{modk}
 f\left(\gamma\tau\right)=(c\tau+d)^k \rho(\gamma) f(\tau), \quad \forall \gamma=\left( \begin{array}{cc}
a&b\\
c&d
\end{array}\right)\in \rg,\, \forall \tau\in\uh,
\eeq
and growing at most polynomially at the cusps (see Section \ref{sec:vector} for the precise definition).
The Lie algebra structure is defined by the pointwise Lie bracket  $[f,g](\tau)=[f(\tau),g(\tau)]$ and is naturally $\mb{Z}$-graded. 

We consider also the Lie algebra of  {\it weakly holomorphic} $\mf{g}$-valued modular forms
$$
\aliaz=\bigoplus_{k \in \mathbb Z}\aliak{k}
$$
by allowing exponential growth at every cusp (see Section \ref{sec:vector}). Its subalgebra $\alia$ of elements of weight zero can be viewed as a straightforward analogue of the automorphic Lie algebras from the previous literature and will be of particular interest. We call the corresponding Lie algebras {\it automorphic Lie algebras of modular type}.
Note that if we ignore the Lie algebra structure, then $\aliapz$ and  $\aliaz$ will simply become the spaces of certain vector-valued modular forms, going back to Shimura and Selberg and studied more recently, in particular, by Knopp and Mason (see more history in \cite{gannon2014the}).

In this paper we 
focus our investigation to {\it automorphic Lie algebras of restricted modular type} when the representation $\rho:\rg\rightarrow\Aut{\mf{g}}$ is restricted from a representation of the Lie group $\SLNC[2]$. 

Our main result is the proof of the following isomorphism. Let, as before, $\mf{g}$ be a complex finite-dimensional simple Lie algebra and $G$ be a corresponding Lie group, which in this case can be chosen as the connected component $\Aut{\mf{g}}^0$ of the automorphism group of $\mf{g}$.

For every group embedding $\bar{\rho}:\SLNC[2]\to G$, we have the natural restricted homomorphism $\rho:\rg\to\Aut{\mf{g}}$ for any subgroup $\rg\subset\SLNZ[2]$. 
Note that any such embedding naturally induces a $\mb{Z}$-grading on $\mf{g}.$

\begin{Theorem}
\label{thm:main introduction}
For any finite index subgroup $\rg\subset\SLNZ[2]$ and restricted homomorphism $\rho:\rg\to\Aut{\mf{g}}$, we have the following isomorphism of the $\mb{Z}$-graded Lie algebras and $\mb{Z}$-graded $\mfpz$-modules
\beq{isom10}
\aliapz \cong  \mf{g} \otimes_\mathbb C \mfpz
\eeq
where the grading of $\mf{g}$ in the right hand side is induced from $\rho.$

In the weakly holomorphic case we have an isomorphism of  $\mb{Z}$-graded Lie algebras  and $\mb{Z}$-graded $\mfz$-modules
\beq{isom20}
\aliaz \cong  \mf{g} \otimes_\mathbb C \mfz.
\eeq
\end{Theorem}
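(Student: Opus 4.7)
The plan is to exhibit an explicit isomorphism $\Phi\colon \aliapz \to \mf{g} \otimes_{\mb{C}} \mfpz$ of the form $\Phi(F)(\tau) = \bar\rho(N(\tau))^{-1} F(\tau)$ for a suitable holomorphic $N\colon \uh \to \SLNC[2]$, exploiting the hypothesis that $\rho = \bar\rho|_\rg$ factors through $\SLNC[2]$. The idea is to choose $N$ so that after conjugation the automorphy cocycle of $F$ is reduced to a diagonal one with values in the Cartan torus; the $\SLNC[2]$-weight decomposition $\mf{g} = \bigoplus_j \mf{g}_j$ then decouples $\Phi(F)$ into scalar modular forms on each weight component.

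\textbf{Constructing the gauge.} Starting from $M(\tau) = \begin{pmatrix} \tau & -1 \\ 1 & 0 \end{pmatrix} \in \SLNC[2]$, a direct computation yields
\[
M(\gamma\tau)^{-1}\gamma M(\tau) \;=\; \begin{pmatrix} c\tau+d & -c \\ 0 & (c\tau+d)^{-1} \end{pmatrix} \;=\; D(\tau,\gamma)\,U(\tau,\gamma),
\]
with $D = \diag(c\tau+d,(c\tau+d)^{-1})$ and $U = \bigl(\begin{smallmatrix} 1 & -c/(c\tau+d) \\ 0 & 1 \end{smallmatrix}\bigr)$. To absorb the unipotent factor, multiply $M$ on the right by $u(\tau) = \bigl(\begin{smallmatrix} 1 & \nu(\tau) \\ 0 & 1 \end{smallmatrix}\bigr)$ with $\nu(\tau) = -\tfrac{\pi i}{6}E_2(\tau)$; the quasi-modular transformation law of the Eisenstein series $E_2$ is precisely what is needed so that $u(\gamma\tau)^{-1}(DU)u(\tau) = D$. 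Setting $N(\tau) = M(\tau)u(\tau)$, one then checks that for $F\in\aliapk{k}$ the modified function $\hat F := \bar\rho(N)^{-1}F$ satisfies
\[
\hat F(\gamma\tau) \;=\; (c\tau+d)^{k}\,\bar\rho(D(\tau,\gamma))\,\hat F(\tau).
\]
Since $\bar\rho(D(\tau,\gamma))$ acts on $\mf{g}_j$ by $(c\tau+d)^j$, decomposing $\hat F = \sum_j \hat F^j$ according to the $\rho$-induced grading on $\mf{g}$ gives $\hat F^j(\gamma\tau) = (c\tau+d)^{k+j}\hat F^j(\tau)$, so $\hat F^j \in \mf{g}_j \otimes_{\mb{C}} \mfpk{k+j}$. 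Summing over $j$ produces the graded isomorphism $\aliapk{k} \cong \bigoplus_j \mf{g}_j \otimes \mfpk{k+j}$, which is the degree-$k$ subspace of $\mf{g}\otimes\mfpz$ in the natural $\mb{Z}$-grading.

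\textbf{Lie structure, modules, and cusp behaviour.} The map $\Phi$ is automatically an isomorphism of $\mb{Z}$-graded Lie algebras and of $\mb{Z}$-graded $\mfpz$-modules: at each $\tau$ it applies the Lie algebra automorphism $\bar\rho(N(\tau))^{-1} \in \Aut{\mf{g}}$ to the value of $F$, and such automorphisms preserve Lie brackets and commute with multiplication by scalars. The main technical hurdle is to verify that $\Phi$ and its inverse $\hat F \mapsto \bar\rho(N(\tau))\hat F$ respect the cusp growth conditions. Since $M(\tau)$ has entries linear in $\tau$, $E_2$ is holomorphic and bounded at every cusp of every finite index subgroup of $\SLNZ[2]$, and $\bar\rho\colon\SLNC[2]\to G$ is an algebraic representation, the gauge $\bar\rho(N(\tau))^{\pm 1}$ is a polynomial function of $\tau$, preserving both the polynomial growth class defining $\aliapz$ and the exponential growth class defining $\aliaz$. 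This simultaneously yields the isomorphisms (\ref{isom10}) and (\ref{isom20}).
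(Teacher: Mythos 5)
Your proof is correct and follows essentially the same route as the paper: your gauge $N(\tau)=M(\tau)u(\tau)$ is, up to right multiplication by the constant matrix $S=\bigl(\begin{smallmatrix}0&-1\\1&0\end{smallmatrix}\bigr)$ (which merely reverses the sign of the induced grading on $\mf{g}$, an immaterial change of convention), exactly the paper's intertwiner $\modaut{}=\exp(\tau E)\exp\bigl(2\pi i\tfrac{E_2(\tau)}{12}F\bigr)\exp(\ln(\dk{1})H)$, with your diagonal cocycle $D(\tau,\gamma)$ doing the weight bookkeeping that the paper encodes in the formal $\dk{1}$ factor, and your computation of $u(\gamma\tau)^{-1}DU\,u(\tau)=D$ reproducing the paper's key Lemma on $\modaut[(\gamma\tau)]{n}=\rho_n(\gamma)\modaut{n}$. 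One small imprecision: $E_2$ is not bounded at cusps other than $i\infty$ (in the local parameter $\tau_s$ it grows like $|\tau_s|^2$), and $\bar\rho(N(\tau))^{\pm1}$ is a polynomial in $\tau$ and $E_2(\tau)$ rather than in $\tau$ alone, but since all that is needed is polynomial growth of the gauge in $|\tau_s|$, your conclusion that both growth classes are preserved still holds.
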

This result can be considered as a hyperbolic version of the isomorphism (\ref{kac}) of Kac, since the action of $\Gamma$ on $\mathfrak g$ is inner.
It reduces the current problem to the classical problem of describing $\mfpz$ and $\mfz$. 

It is known that for any congruence subgroup of $\SLNZ[2]$ the graded ring $\mfpz$ is finitely generated  with generators of weight at most 6 and with relations in weight at most 12, see \cite{voight2015canonical, landesman2016spin}.

 In particular, Wagreich
\cite{wagreich1980algebras} 
showed that this ring is generated by two elements only for the modular group $\Gamma(1)=\SLNZ[2]$ itself and its congruence subgroups
$$
\Gamma_0(2)=\left\{\left( \begin{array}{cc}
a&b\\
c&d
\end{array}\right) \in \SLNZ[2]: c \equiv 0 \mod{2}\right\},
$$
$$
\Gamma(2)=\left\{\left( \begin{array}{cc}
a&b\\
c&d
\end{array}\right) \in \SLNZ[2]: \left( \begin{array}{cc}
a&b\\
c&d
\end{array}\right)\equiv \left( \begin{array}{cc}
1&0\\
0&1
\end{array}\right) \mod{2}\right\}.
$$
The first two groups play a fundamental role in the classical theory of elliptic functions. Namely, we have two canonical forms of elliptic curves due to Weierstrass and Jacobi:
$$y^2=4x^3-g_2x-g_3, \quad y^2=1-2\delta x^2+\varepsilon x^4.$$ The coefficients $g_2$ and $g_3$ are modular forms of $\Gamma(1)$ of weight 4 and 6, while Jacobi coefficients $\delta$ and $\varepsilon$ are modular forms of $\Gamma_0(2)$ of weight 2 and 4. The corresponding graded rings for these groups are
$\mfpz[\mg]=\mathbb C[g_2,g_3]$ and $\mfpz[\Gamma_0(2)]=\mathbb C[\delta,\varepsilon]$ respectively.
The ring $\mfpz[\Gamma(2)]$ is freely generated by the following two modular forms of weight 2: 
$$
F_2(\tau)=2E_2(2\tau)-E_2(\tau), \quad H_2(\tau)=F_2(\tau/2),
$$
where $E_2$ is the classical Eisenstein series, see the next section.

The automorphic Lie algebra $\alia$ depends nontrivially on the the choice of $\rho$ and the structure of $\mfz$, and is not in general isomorphic to $\mf{g} \otimes_\mathbb C \mfk{0}$. 
In particular, for the full modular group $\Gamma=\Gamma(1)=\SLNZ[2]$ and $\mf{g}=\slnc[2]$ we have  $\mfk{0}= \mathbb C[j],$ where $j$ is the {\it Hauptmodul (or Klein's absolute invariant)} \cite{rankin1977modular}, and the following result.
\begin{Theorem}
\label{thm:main 2 introduction}
For $\mf{g}=\slnc[2]$ and the adjoint action of $\rg=\SLNZ[2]$ on $\mf{g}$, we have the following isomorphism
$$
\alia\cong \mathbb C\langle h,e,f\rangle \otimes_\mathbb C \mathbb C[j],
$$
where 
$$
[h,e]=2 e, \quad [h,f]=-2 f, \quad [e,f]=j(j-1728) h.
$$
\end{Theorem}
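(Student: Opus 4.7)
The strategy is to apply Theorem \ref{thm:main introduction} to $(\mf{g},\rg,\rho)=(\slnc[2],\SLNZ[2],\Ad)$ and unpack the weight-zero component using the classical structure of weakly holomorphic modular forms on $\SLNZ[2]$. Theorem \ref{thm:main introduction} supplies the graded Lie algebra isomorphism $\aliaz \cong \slnc[2] \otimes_{\mathbb C} \mfz$. The diagonal torus of $\SLNC[2]$ acts on $\slnc[2]$ via $\Ad$ with weights $0$, $+2$, $-2$ on $h$, $e$, $f$ respectively, so the induced $\mb{Z}$-grading on the right-hand side is the standard Cartan grading, and equating weight-zero components on both sides gives
\[
\alia \cong (h\otimes\mfk{0}) \oplus (e\otimes\mfk{-2}) \oplus (f\otimes\mfk{2}).
\]

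The remaining task is to identify these three $\mathbb{C}[j]$-modules together with the multiplication $\mfk{-2}\otimes\mfk{2}\to\mfk{0}$. Using the standard fact $\mfz=\mathbb{C}[g_2,g_3,\Delta^{-1}]$ with $\Delta=g_2^3-27g_3^2$ and $j=1728\,g_2^3/\Delta$, a direct monomial inspection shows $\mfk{0}=\mathbb{C}[j]$ and that $\mfk{\pm 2}$ are free of rank one over $\mathbb{C}[j]$ with canonical generators $g_2 g_3/\Delta$ and $g_2^2 g_3/\Delta$ respectively. This structural step is the only one requiring more than bookkeeping; it reflects the fact that the compactified modular curve for $\SLNZ[2]$ is $\mathbb{P}^1_j$ with a single cusp at $j=\infty$.

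The product of the two weight-$\pm 2$ generators is then computed using $g_3^2/\Delta=(g_2^3-\Delta)/(27\Delta)=(j-1728)/(27\cdot 1728)$:
\[
(g_2 g_3/\Delta)(g_2^2 g_3/\Delta) = (g_2^3/\Delta)(g_3^2/\Delta) = \frac{j(j-1728)}{1728^2\cdot 27}.
\]
Rescaling the $\mfk{-2}$-generator by the constant $1728^2\cdot 27$, call it $\alpha$, and setting $\beta=g_2^2 g_3/\Delta$, the elements $\tilde h = h\otimes 1$, $\tilde e = e\otimes\alpha$, $\tilde f=f\otimes\beta$ form a free $\mathbb{C}[j]$-basis of $\alia$. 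Combining the pointwise bracket with the $\slnc[2]$-relations $[h,e]=2e$, $[h,f]=-2f$, $[e,f]=h$ yields at once $[\tilde h,\tilde e]=2\tilde e$, $[\tilde h,\tilde f]=-2\tilde f$, and $[\tilde e,\tilde f]=j(j-1728)\,\tilde h$, producing the advertised isomorphism $\alia \cong \mathbb{C}\langle h,e,f\rangle\otimes_{\mathbb{C}}\mathbb{C}[j]$.
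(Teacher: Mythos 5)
Your proposal is correct and follows essentially the same route as the paper: it combines the main isomorphism $\aliaz\cong\mf{g}\otimes_{\mathbb C}\mfz$ with the fact that each graded piece $\mfk[{\mg}]{k}$ is a free rank-one $\mathbb C[j]$-module, then computes the product of the weight-$\pm2$ generators to get $j(j-1728)$ up to a constant absorbed by rescaling. The only cosmetic differences are that you verify the rank-one structure of $\mfk[{\mg}]{\pm2}$ by direct monomial inspection of $\mathbb C[E_4,E_6,\Delta^{-1}]$ where the paper cites the Duke--Jenkins lemma, and you work in the $g_2,g_3$ normalisation (forcing a constant rescaling) where the paper's choice $F_{-2}=\Delta^{-1}E_4E_6$, $F_{2}=\Delta^{-1}E_4^2E_6$ gives $F_{-2}F_{2}=j(j-1728)$ exactly.
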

As a corollary we show that in this case the automorphic Lie algebra $\alia$  is rather surprisingly isomorphic to the Onsager algebra.

The structure of the paper is as follows.

In the next section we consider the simplest example of our automorphic Lie algebras when $\rg=\SLNZ[2]$ is the modular group itself acting by conjugation on $\mathfrak g=\slnc[2]$.
Then we discuss the theory of vector-valued modular forms (VVMF) for all finite index subgroups of modular groups and the particular class of representations restricted from irreducible representations of $\SLNC[2].$ We describe the space of the corresponding VVMF using a special intertwining operator $\modaut{n}$ (see Theorem 3.2).
In Sections \ref{sec:automorphic} and \ref{sec:zero} we use all this to prove our main results on the structure of automorphic Lie algebras of restricted modular type.
In the zero weight case, the Lie algebra structures of $\aliapk{0}$ and $\aliak{0}$ essentially depend on the algebraic properties of the modular forms $\mfpz$ and $\mfz$. 
In Section \ref{sec:full} we consider in more details the automorphic Lie algebra $\aliak[{\mg}]{0}$ for the full modular group using the language of root cohomology introduced in \cite{knibbeler2020cohomology}. 
In Section \ref{sec:principal} we discuss the case of the genus zero principal congruence subgroups $\Gamma(N), \, 2\leq N\leq 5$, which are known after Klein to be closely related to the regular polyhedra. In Section \ref{sec:gamma6} the genus 1 cases of $\Gamma(6)$ and related modular commutator subgroup are discussed in relation to the Markov Diophantine equation.
We finish with a brief discussion of the extensions and representations of our algebras.

\section{The Simplest Modular Group Case}
\label{sec:simplest}
In this section we consider the simplest case when $\mathfrak g=\slnc[2]$ and  $\rg=\SLNZ[2]$ is the modular group \footnote{Sometimes the modular group is defined as $\PSL(2, \mathbb Z)=\SLNZ[2]/\pm \Id$, which is more natural from the hyperbolic geometry point of view, but we will follow the classical arithmetic tradition here.} acting 
on $\mf{g}$ by conjugation, which will be assumed throughout this section.

We will need the following deep results going back to the classical paper by Ramanujan \cite{ramanujan1916on}.
Recall that the Eisenstein series are defined by
\beq{def1}
E_k(\tau)=\frac{1}{2}\sum\frac{1}{(m\tau+n)^{k}}, \quad k>2,\, \im \tau>0,
\eeq
where the summation goes over coprime pairs $m,n$ of integers (see e.g. \cite{bruinier2008the}).
Their Fourier coefficients have a deep arithmetic meaning: for even $k$
$$
E_{k}(\tau)=1-\frac{2k}{B_{k}}\sum_{n=1}^\infty \sigma_{k-1}(n)q^n, \quad q=e^{2\pi i \tau},
$$
where $B_k$ are Bernoulli numbers and $\sigma_m(n)$ is the sum of $m$-th powers of all the divisors of $n$.

The last sum converges for $k=2$, which allows to define the Eisenstein series $E_2,$ a special case since the initial summation (\ref{def1}) for $k=2$ is divergent.
The special role of the Eisenstein series $E_2$ also becomes apparent in its transformation properties. It is not a modular form but it transforms as
\beq{eq:e2transformation}
E_2(\tau+1)=E_2(\tau),\quad E_2(-\tau^{-1})=\tau^2E_2(\tau)+\frac{12\tau}{2\pi i}
\eeq
and for any $\gamma\in\rg$ as $E_2(\gamma \tau)=(c\tau+d)^2E_2(\tau)+\frac{12}{2\pi i}c(c\tau+d)$.

Geometrically, it defines the canonical connections (also known as {\it modular}, or {\it Serre derivatives} \cite{serre1973congruences}) on the space of modular forms of weight $k$ by 
\newcommand{\D}{\frac{1}{2\pi i}\frac{\rd}{\rd\tau}}
\beq{ser}
D_k=\D-\frac{k}{12}E_2.
\eeq
This observation essentially goes back to Ramanujan \cite{ramanujan1916on}, who found a remarkable closed system of differential equations for the Eisenstein series $E_2, E_4, E_6$
\beq{ram1}
\D E_2=\!\frac{E_2^2-E_4}{12},\quad \D E_4=\!\frac{E_2E_4-E_6}{3},\quad \D E_6=\!\frac{E_2E_6-E_4^2}{2},
\eeq
which can be re-written as
$$
D_1 E_2=-\frac{E_4}{12},\quad D_4 E_4=-\frac{E_6}{3},\quad D_6 E_6=-\frac{E_4^2}{2},
$$
(the initial mismatch of subscripts is intended), 
see \cite[Proposition 15, p.49]{bruinier2008the}.

It is well-known that the ring of modular forms for $\rg=\SLNZ[2]$ is freely generated by the Eisenstein series $E_4$ and $E_6$, which are different from the Weierstrass parameters $g_2$ and $g_3$ only by a constant factor, see \cite{bruinier2008the}.
Ramanujan's relations imply that the quasi-modular extension $\mb{C}[E_2,E_4,E_6]$ of the ring of modular forms $\mfpz=\mb{C}[E_4,E_6]$ is closed under the usual derivatives.

We consider the standard {\it modular discriminant} $$\Delta=\frac{E_4^3-E_6^2}{1728}=q\prod_{n=1}^\infty(1-q^n)^{24}$$ and the {\it $j$-invariant} (which is a particular Hauptmodul)
$$
j=\frac{E_4^{3}}{\Delta}=\frac{1}{q}+744+196884q+21493760q^2+\dots,
$$
(see e.g. \cite{bruinier2008the}). 
Note that $\Delta$ vanishes at the cusp and $$j-1728=E_6^2/\Delta.$$
The algebra of weakly holomorphic forms $\mfz[{\rg}]$ is the localisation of $\mfpz$ at the cusp: 
\beq{whol}
\mfz[{\rg}]=\mb{C}[E_4,E_6,\Delta^{-1}]
\eeq
with $\mfk{0}= \mathbb C[j].$

We are now ready to study our Lie algebras. Let us start with an observation that
\beq{formg}
a_{-2}(\tau)=\begin{pmatrix}\tau&-\tau^2\\1&-\tau\end{pmatrix}
\eeq
belongs to our space $\aliapvar{-2}{\mf{g}}{\rg}{\rho}$ of forms of weight $-2,$
which follows from two simple identities
$$
a_{-2}(T(\tau))=\begin{pmatrix}\tau+1&-(\tau+1)^2\\1&-\tau-1\end{pmatrix}=T\begin{pmatrix}\tau&-\tau^2\\1&-\tau\end{pmatrix}T^{-1},
$$
$$
a_{-2}(S(\tau))=\begin{pmatrix}-\tau^{-1}&-\tau^{-2}\\1&\tau^{-1}\end{pmatrix}=\tau^{-2}S\begin{pmatrix}\tau&-\tau^2\\1&-\tau\end{pmatrix}S^{-1}, 
$$
where 
$$T=\begin{pmatrix}1&1\\0&1\end{pmatrix}, \, S=\begin{pmatrix}0&-1\\1&0\end{pmatrix}
.$$
Note that $a_{-2}(\tau)$ does not look like a typical modular form depending polynomially on $\tau$, but this phenomenon is well known in the theory of vector-valued modular forms, see Knopp and Mason \cite[Theorem 2.2]{knopp2011logarithmic} and Section \ref{sec:vector} below.

The form $a_{-2}(\tau)$ plays a similar role here as the form $\mathfrak{A}$ in \cite[Theorem 2.1]{lombardo2010on}, and it is interesting to view this in the context of the `duality' between modular forms and binary forms discussed in \cite{olver2000transvectants}.

By taking Serre derivatives we produce two more forms $a_0=D_{-2}\,a_{-2}$ and $a_2=D_0\,a_{0}$ of weights 0 and 2 respectively. 

Due to Knopp and Mason \cite[Theorem 3.13]{knopp2011logarithmic} we know that $\aliapz$ is a free module over $\mb{C}[E_4,E_6]$. From the results of Franc and Mason \cite{franc2018on} we can derive that the corresponding Hilbert series is
$$
\frac{t^{-2}+1+t^2}{(1-t^4)(1-t^6)}.
$$ 
Since $a_{-2}(\tau), \, a_{0}(\tau), \, a_{2}(\tau)$ are linearly independent over $\mb{C}[E_4,E_6]$ (see \cite[Lemma 3.1]{franc2018on}), it follows from the last formula that $\{a_{-2}, a_0, a_2\}$ is a basis for $\aliapvar{\ast}{\mf{g}}{\rg}{\rho}$ as a $\mb{C}[E_4,E_6]$-module.

Computing their commutators, we find that the matrix of $\ad(a_0)$ with respect to this basis has the form  \[\frac{1}{2\pi i} \begin{pmatrix}-2&0&0\\0&0&0\\E_4/18&0&2\end{pmatrix}.\] 
This motivates introducing the new basis 
\beq{efh}
f=a_{-2},\quad h=2\pi i\, a_0,\quad e=\pi^2/36\, E_4\, a_{-2}+2\pi^2\, a_2,
\eeq
or explicitly
\begin{align*}
f(\tau)&=\!\left(\begin{array}{rr}
\tau & -\tau^{2} \\
1 & -\tau
\end{array}\right),
\\h(\tau)&=\!\left(\begin{array}{rr}
\frac{1}{3} i \, \pi \tau E_{2}\left(\tau\right) + 1 & -\frac{1}{3} i \, \pi \tau^{2} E_{2}\left(\tau\right) - 2 \, \tau \\
\frac{1}{3} i \, \pi E_{2}\left(\tau\right) & -\frac{1}{3} i \, \pi \tau E_{2}\left(\tau\right) - 1
\end{array}\right),
\\e(\tau)&=\!\left(\begin{array}{rr}
\frac{1}{36} \, \pi^{2} \tau E_{2}\left(\tau\right)^{2} - \frac{1}{6} i \, \pi E_{2}\left(\tau\right) & -\frac{1}{36} \, \pi^{2} \tau^{2} E_{2}\left(\tau\right)^{2} + \frac{1}{3} i \, \pi \tau E_{2}\left(\tau\right) + 1 \\
\frac{1}{36} \, \pi^{2} E_{2}\left(\tau\right)^{2} & -\frac{1}{36} \, \pi^{2} \tau E_{2}\left(\tau\right)^{2} + \frac{1}{6} i \, \pi E_{2}\left(\tau\right)
\end{array}\right).
\end{align*}
The remarkable fact is that they form a standard $\mf{g}$-triple $$[h,e]=2e, \quad [h,f]=-2f, \quad [e,f]=h,$$ 
where the last relation can be checked by direct calculation.
Thus we have the following result, which is the simplest nontrivial case of Theorem \ref{thm:main introduction}.

\begin{Theorem}
\label{thm:main simplest}
For $\mathfrak g=\slnc[2], \, \Gamma=\SLNZ[2]$ and the adjoint action $\rho$ of $\Gamma \subset \SLNC[2]$ we have 
$$
\aliapvar{\ast}{\mathfrak g}{\Gamma}{\rho} =\mb{C}\langle h,e,f\rangle\otimes_\mathbb C\mb{C}[E_4,E_6]. 
$$
In the weakly holomorphic case we have 
$$
\aliaz =  \mb{C}\langle h,e,f\rangle\otimes_\mathbb C \mb{C}[E_4,E_6,\Delta^{-1}].
$$
The weight of $h$ is $0$, and $e$ and $f$ have weights $2$ and $-2$ respectively.
\end{Theorem}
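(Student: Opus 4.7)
The plan is to exhibit $\{a_{-2}, a_0, a_2\}$ as a free $\mb{C}[E_4,E_6]$-basis of $\aliapvar{\ast}{\mf{g}}{\rg}{\rho}$ by a Hilbert-series argument and then to rescale it into the claimed $\mf{sl}_2$-triple, using the weight grading to reduce the most delicate bracket identity to a one-dimensional scalar comparison.

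First I would verify that $a_{-2}$ as defined in (\ref{formg}) lies in $\aliapvar{-2}{\mf{g}}{\rg}{\rho}$: the two displayed matrix identities check equivariance of weight $-2$ under the generators $T$ and $S$ of $\SLNZ[2]$, and the polynomial-in-$\tau$ growth at the cusp is admissible for vector-valued modular forms by \cite[Theorem 2.2]{knopp2011logarithmic}. Setting $a_0 = D_{-2}\, a_{-2}$ and $a_2 = D_0\, a_0$ via the Serre derivative (\ref{ser}) then produces elements of weights $0$ and $2$, since the Serre derivative intertwines the slash actions of consecutive even weights, acts componentwise on matrix-valued forms, and preserves admissibility at the cusp because $E_2$ is holomorphic there. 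By \cite[Theorem 3.13]{knopp2011logarithmic} the module $\aliapvar{\ast}{\mf{g}}{\rg}{\rho}$ is free of finite rank over $\mb{C}[E_4, E_6]$, and the results of \cite{franc2018on} yield the Hilbert series $(t^{-2}+1+t^2)/((1-t^4)(1-t^6))$, giving rank $3$ with generators in weights $-2, 0, 2$. Linear independence of $\{a_{-2}, a_0, a_2\}$ over $\mb{C}[E_4, E_6]$ follows from \cite[Lemma 3.1]{franc2018on}, whence they form a free basis.

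With the basis in hand, the matrix of $\ad(a_0)$ is computed by direct matrix multiplication, and after the rescaling (\ref{efh}) the identities $[h, e] = 2e$ and $[h, f] = -2f$ are read off directly from this matrix. The main obstacle is the relation $[e, f] = h$, which a priori involves cross terms in $E_2$ and $E_2^2$ coming from the explicit entries of $h$ and $e$. The decisive observation is that $[e, f]$ has weight $0$, and the weight-$0$ subspace of $\aliapvar{\ast}{\mf{g}}{\rg}{\rho}$ equals $\mb{C}[E_4, E_6]_0 \cdot a_0 = \mb{C} \cdot a_0 = \mb{C}\cdot h/(2\pi i)$, since $E_4$ and $E_6$ have strictly positive weights. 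Hence $[e, f] = \lambda\, h$ for some $\lambda \in \mb{C}$, and matching a single matrix entry pins down $\lambda = 1$; the Ramanujan relations (\ref{ram1}) are the algebraic mechanism forcing the cancellation of the $E_2$-contributions in an explicit calculation, but the weight argument bypasses the need to track them.

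Finally, the weakly holomorphic statement is formal: $\mfz = \mb{C}[E_4, E_6, \Delta^{-1}]$ is the localisation of $\mfpz$ at $\Delta$, and $\aliaz$ is obtained from $\aliapvar{\ast}{\mf{g}}{\rg}{\rho}$ by the same localisation on coefficients, since $\Delta$ vanishes only at the cusp. The basis $\{f, h, e\}$ therefore persists with coefficient ring extended accordingly.
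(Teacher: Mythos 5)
Your proposal is correct and follows essentially the same route as the paper: exhibiting $a_{-2}$, $a_0=D_{-2}a_{-2}$, $a_2=D_0a_0$, invoking the Knopp--Mason freeness theorem and the Franc--Mason Hilbert series to conclude these three elements form a free $\mathbb{C}[E_4,E_6]$-basis, rescaling to the triple $(h,e,f)$, and localising at $\Delta$ for the weakly holomorphic case. The one place you diverge is a welcome refinement rather than a different approach: where the paper disposes of $[e,f]=h$ by ``direct calculation,'' you note that $[e,f]$ lies in the weight-zero graded piece, which equals $\mathbb{C}\,a_0$ because $M_2(\Gamma(1))=M_{-2}(\Gamma(1))=0$, so only a single scalar entry need be compared to pin down the coefficient.
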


Since all $\slnc[2]$-triples in $\slnc[2]$ are conjugate it is instructive to look for a conjugation sending 
$$\left(\begin{pmatrix}1&0\\0&-1\end{pmatrix},\begin{pmatrix}0&1\\0&0\end{pmatrix},\begin{pmatrix}0&0\\1&0\end{pmatrix}\right)\mapsto (h(\tau),e(\tau),f(\tau)).$$
\begin{proposition}
\label{prop:conjugation}
Define the matrix
$$
\modaut{1}
=\begin{pmatrix}2\pi i \tau\frac{E_2(\tau)}{12}+1&\tau\\2\pi i \frac{E_2(\tau)}{12}&1\end{pmatrix}.
$$
Then
\begin{align*}
h(\tau)&=\modaut{1}\begin{pmatrix}1&0\\0&-1\end{pmatrix}\modaut{1}^{-1},
\\e(\tau)&=\modaut{1}\begin{pmatrix}0&1\\0&0\end{pmatrix}\modaut{1}^{-1},
\\f(\tau)&=\modaut{1}\begin{pmatrix}0&0\\1&0\end{pmatrix}\modaut{1}^{-1}.
\end{align*}
\end{proposition}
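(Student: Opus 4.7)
I would prove the proposition by direct matrix computation, exploiting that $\modaut{1}$ is a single element of $\SLNC[2]$ depending on $\tau$. The first step is to verify invertibility. Writing $c:=2\pi i\,\tfrac{E_2(\tau)}{12}$ for brevity, so that $\modaut{1}=\bigl(\begin{smallmatrix} c\tau+1 & \tau\\ c & 1\end{smallmatrix}\bigr)$, one sees immediately that $\det\modaut{1}=(c\tau+1)-\tau c=1$ and hence
\[
\modaut{1}^{-1}=\begin{pmatrix} 1 & -\tau \\ -c & c\tau+1\end{pmatrix}.
\]

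With $\modaut{1}^{-1}$ in hand, the three conjugations reduce to routine $2\times 2$ multiplications. The identity for $f$ is the easiest: the nilpotent $\bigl(\begin{smallmatrix}0&0\\1&0\end{smallmatrix}\bigr)$ picks out the second row of $\modaut{1}^{-1}$ and places it as the second row of the product, so all $E_2$-dependent terms cancel automatically, and one obtains $\bigl(\begin{smallmatrix}\tau & -\tau^2\\ 1 & -\tau\end{smallmatrix}\bigr)=f(\tau)$ directly. The identity for $h$ is next: a short expansion produces the matrix $\bigl(\begin{smallmatrix}2c\tau+1 & -2c\tau^2-2\tau\\ 2c & -2c\tau-1\end{smallmatrix}\bigr)$, which matches the given formula for $h(\tau)$ after substituting $2c=\tfrac{i\pi}{3}E_2(\tau)$.

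The identity for $e$ is the most involved and I expect it to be the main obstacle. Since the standard nilpotent $\bigl(\begin{smallmatrix}0&1\\0&0\end{smallmatrix}\bigr)$ sits in the top right, the conjugation produces entries of the form $\pm c(c\tau+1)$, $\pm c^2$ and $(c\tau+1)^2$, and hence quadratic $E_2^2$ contributions appear in every entry. Expanding each of the four entries and substituting $c^2=-\pi^2 E_2(\tau)^2/36$, one checks entry-by-entry that the resulting matrix agrees with the stated formula for $e(\tau)$. The obstacle is purely notational bookkeeping; conceptually there is no content beyond the statement that $\modaut{1}$ is precisely the change-of-basis matrix identifying the standard $\slnc[2]$-triple $(H_0,E_0,F_0)$ with the modular triple $(h(\tau),e(\tau),f(\tau))$, and in particular the $\slnc[2]$ commutation relations for the latter follow automatically from those for the former, giving an independent cross-check on the previously announced relation $[e,f]=h$.
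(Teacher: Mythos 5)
Your computation is correct: $\det\modaut{1}=1$ gives the stated inverse, and the three conjugations expand exactly as you describe (I checked that the $(c\tau+1)^2$, $\pm c(c\tau+1)$, $-c^2$ entries for $e$ reproduce the paper's displayed matrix after substituting $c^2=-\pi^2E_2(\tau)^2/36$). The paper omits this proof entirely, treating the proposition as a direct verification, so your argument supplies precisely the intended computation; your closing remark that the $\slnc[2]$ relations for $(h,e,f)$ then follow for free is also consistent with how the paper uses $\modaut{1}$ as the starting point for the general theory.
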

The computation of the structure of automorphic Lie algebras using Serre derivatives is hard to generalise as the Serre derivatives do not respect the Lie bracket. Instead, we will develop a general theory starting from Proposition \ref{prop:conjugation}.

Before proceeding with the general theory, let us study the zero weight automorphic Lie algebra $\aliak{0}$ in this simplest case.

Duke and Jenkins studied a basis of $\mfk[{\rg}]{k}$ with particularly interesting $q$-expansions \cite{duke2008on}. A preliminary result of theirs is the following.
\begin{lemma}[Duke-Jenkins]
\label{lem:meromorphic weight k}
If $k$ is odd then $\mfk[{\rg}]{k}=\{0\}$. If $k$ is even, then $\mfk[{\rg}]{k}$ is the one-dimensional module over $\mb{C}[j]$,  generated by \[F_{k}=\Delta^{\ell}E_s\] with $\ell$ and $s$ uniquely defined by $k=12\ell+s$ and $s\in\{0,4,6,8,10,14\}$.
\end{lemma}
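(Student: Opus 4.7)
The plan is to treat the two parities separately. For odd $k$, applying the transformation law $f(\gamma\tau)=(c\tau+d)^k f(\tau)$ with $\gamma=-I\in\mg$ gives $f=(-1)^kf=-f$, so $f=0$. The substantive content is the even case, where I aim to show that the $\mb{C}[j]$-linear map
\[\mb{C}[j]\to\mfk[{\mg}]{k},\qquad P\mapsto P\cdot F_k\]
is an isomorphism. Injectivity is immediate: $F_k=\Delta^\ell E_s$ (with the convention $E_0=1$) is holomorphic on $\uh$ since $\Delta$ is, and is meromorphic at the cusp, so $F_k\in\mfk[{\mg}]{k}$; nonvanishing of $F_k$ together with non-constancy of $j$ forces $P(j)F_k\equiv 0$ to imply $P=0$.

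For surjectivity, given $f\in\mfk[{\mg}]{k}$ I form $g=f/F_k$, a weight-zero $\mg$-invariant meromorphic function on $\uh$. By the already recorded identity $\mfk[{\mg}]{0}=\mb{C}[j]$ it suffices to prove that $g$ has no poles on $\uh$. Since $\Delta$ is nonvanishing on $\uh$, the divisor of $F_k$ on $\uh$ is supported on the $\mg$-orbits of $\rho=e^{2\pi i/3}$ and $i$, so the task reduces to establishing the two inequalities
\[v_\rho(f)\ge v_\rho(F_k)\qquad\text{and}\qquad v_i(f)\ge v_i(F_k),\]
where $v_p(\cdot)$ denotes the usual order of vanishing at $p\in\uh$.

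These inequalities follow from a congruence extracted from the orbifold structure of $X=\mg\backslash\uh$. The $\mg$-invariant function $g$ descends to a meromorphic function on $X$, and since the quotient map $\uh\to X$ has ramification indices $3$ at $\rho$ and $2$ at $i$, the orders of $g$ pulled back to $\uh$ lie in $3\mb{Z}$ and $2\mb{Z}$ respectively. Translating via $g=f/F_k$ gives
\[v_\rho(f)\equiv v_\rho(F_k)\pmod{3},\qquad v_i(f)\equiv v_i(F_k)\pmod{2}.\]
A direct enumeration over $s\in\{0,4,6,8,10,14\}$, using the zero data of $E_4$ (simple zero at $\rho$) and $E_6$ (simple zero at $i$) together with the identities $E_8=E_4^2$, $E_{10}=E_4E_6$, $E_{14}=E_4^2E_6$, shows that $v_\rho(F_k)\in\{0,1,2\}$ and $v_i(F_k)\in\{0,1\}$ are the minimal non-negative representatives of the relevant congruence classes modulo $3$ and $2$. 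Combined with $v_\rho(f),v_i(f)\ge 0$ this promotes the congruences to the required inequalities, and hence $g\in\mb{C}[j]$ as needed.

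The only nontrivial step is the orbifold-ramification argument producing the mod-$2$ and mod-$3$ congruences; everything else (that $F_k\in\mfk[{\mg}]{k}$, the low-weight Eisenstein identities, and the minimality of $v_p(F_k)$ for each $s$) reduces to routine bookkeeping.
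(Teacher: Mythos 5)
Your proof is correct. Note that the paper itself does not prove this lemma: it is imported verbatim from Duke--Jenkins with a citation, so there is no in-paper argument to compare against; your write-up is essentially the standard proof from that source. The two pivots are both sound: (i) for two nonzero meromorphic forms of the same weight the ratio is $\mg$-invariant, and invariance under the order-$3$ (resp.\ order-$2$) stabiliser of $\rho$ (resp.\ $i$) forces the order of the ratio at that point into $3\mb{Z}$ (resp.\ $2\mb{Z}$), giving your congruences; (ii) the enumeration $v_\rho(E_s)\in\{0,1,2\}$, $v_i(E_s)\in\{0,1\}$ over $s\in\{0,4,6,8,10,14\}$, which together with $v_\rho(f),v_i(f)\ge 0$ upgrades the congruences to inequalities. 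Two small points worth making explicit if you write this up: the claim that the divisor of $F_k$ on $\uh$ is supported on the orbits of $\rho$ and $i$ rests on the valence formula (which is also what gives the simple zeros of $E_4$ at $\rho$ and $E_6$ at $i$ and nothing else), and for $\ell<0$ the factor $\Delta^{\ell}$ is holomorphic on $\uh$ because $\Delta$ is \emph{nonvanishing} there, not merely holomorphic. Neither affects the validity of the argument. Your use of $\mfk[{\mg}]{0}=\mb{C}[j]$ is legitimate since the paper records this identity before the lemma.
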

For our purposes it is more illustrative to write the generator in the form
\[F_{k}=\Delta^{\ell}E_4^{\n}E_6^{\nn}\]
where $\n$ and $\nn$ are the unique positive integers such that $$4\n+6\nn=s.$$
To see that this is the same function, notice that $E_4^{\n}E_6^{\nn}$ and $E_s$ are elements of the one-dimensional vector space $\mfpk{s}$ and the first term in their $q$-expansion is $1$.

Lemma \ref{lem:meromorphic weight k} and Theorem \ref{thm:main simplest} prove Theorem \ref{thm:main 2 introduction}, which we can now restate in terms of explicit matrices.
\begin{Theorem}
\label{thm:alias sl2 weight zero}
Define the matrices
$$
\bar{f}=\Delta^{-1}E_4^2 E_6\,f,\quad\bar{h}=h, \quad\bar{e}=\Delta^{-1}E_4 E_6\,e,
$$
where
$f, h, e$ are given by (\ref{efh}). 
Then the zero weight automorphic Lie algebra
\beq{isom22}
\aliavar{0}{\mf{g}}{\rg}{\rho}= \mb{C}\langle \bar{h},\bar{e},\bar{f}\rangle\otimes_\mathbb C\mb{C}[j],
\eeq
and $\bar h, \bar e, \bar f$
satisfy the commutation relations 
\beq{barrel}
[\bar{h},\bar{e}]=2\bar{e}, \quad
[\bar{h},\bar{f}]=-2\bar{f}, \quad
[\bar{e},\bar{f}]=j(j-1728)\, \bar{h}.
\eeq
\end{Theorem}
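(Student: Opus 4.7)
The plan is to extract the weight-zero component of the graded Lie algebra $\aliaz$ described in Theorem \ref{thm:main simplest} and then compute the bracket directly, identifying the resulting scalar coefficient with $j(j-1728)$.

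First I would invoke Theorem \ref{thm:main simplest}, which gives $\aliaz = \mb{C}\langle h,e,f\rangle \otimes_{\mb{C}} \mb{C}[E_4,E_6,\Delta^{-1}]$ together with the assignment of weights $0,2,-2$ to $h,e,f$. An element has total weight zero exactly when it is of the form $h\otimes m_0 + e\otimes m_{-2} + f\otimes m_{2}$ with $m_k\in\mfk{k}$. Thus the weight-zero subalgebra splits as a $\mb{C}[j]$-module into the direct sum $h\cdot\mfk{0}\oplus e\cdot \mfk{-2}\oplus f\cdot\mfk{2}$. By Lemma \ref{lem:meromorphic weight k} (Duke--Jenkins), applied with $k=-2$ and $k=2$, the spaces $\mfk{-2}$ and $\mfk{2}$ are free of rank one over $\mb{C}[j]$, with generators given respectively by the weight choices $(n_4,n_6,\ell)=(1,1,-1)$ and $(2,1,-1)$. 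Explicitly these generators are $\Delta^{-1}E_4E_6$ and $\Delta^{-1}E_4^2E_6$. Setting $\bar h=h$, $\bar e = \Delta^{-1}E_4E_6\, e$, $\bar f=\Delta^{-1}E_4^2E_6\, f$ therefore produces a basis of $\aliavar{0}{\mf{g}}{\rg}{\rho}$ as a free $\mb{C}[j]$-module of rank three, establishing the isomorphism (\ref{isom22}) at the level of $\mb{C}[j]$-modules.

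Next I would compute the Lie brackets. Since the pointwise bracket on $\aliaz$ is $\mfz$-bilinear, and since the $\mb{C}[E_4,E_6,\Delta^{-1}]$-valued scalars commute with one another, the first two relations are immediate:
\[
[\bar h,\bar e] = \Delta^{-1}E_4E_6\,[h,e] = 2\bar e, \qquad [\bar h,\bar f] = \Delta^{-1}E_4^2E_6\,[h,f] = -2\bar f.
\]
For the third relation, the same bilinearity gives
\[
[\bar e,\bar f] = \Delta^{-2}E_4^3E_6^2\,[e,f] = \frac{E_4^3E_6^2}{\Delta^2}\,\bar h.
\]
The final step is to rewrite the scalar. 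From the identities $j = E_4^3/\Delta$ and $j - 1728 = E_6^2/\Delta$ recalled just after (\ref{whol}), we see immediately that $E_4^3E_6^2/\Delta^2 = j(j-1728)$, giving the desired commutation relation (\ref{barrel}).

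There is no substantial obstacle: the result is a direct consequence of Theorem \ref{thm:main simplest}, the Duke--Jenkins lemma for the low-weight strata of $\mfz$, and the Weierstrass-style identities relating $E_4^3$ and $E_6^2$ to $j$ and $j-1728$. The only detail requiring care is tracking the weights when solving $4n_4+6n_6=s$ for $s=10,14$ so that the generators of $\mfk{-2}$ and $\mfk{2}$ are of the stated form; once this is correct, the bracket calculation and the identification with $j(j-1728)$ are essentially automatic.
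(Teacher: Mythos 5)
Your proposal is correct and follows essentially the same route as the paper, which proves this theorem by combining Theorem \ref{thm:main simplest} with the Duke--Jenkins Lemma \ref{lem:meromorphic weight k} and the identities $j=E_4^3/\Delta$, $j-1728=E_6^2/\Delta$. Your identification of the generators $\Delta^{-1}E_4E_6$ of $\mfk{-2}$ and $\Delta^{-1}E_4^2E_6$ of $\mfk{2}$, and the resulting bracket computation $[\bar e,\bar f]=\Delta^{-2}E_4^3E_6^2\,\bar h=j(j-1728)\,\bar h$, are exactly the content the paper leaves implicit.
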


Note that the Lie algebra $\aliapvar{\ast}{\mf{g}}{\rg}{\rho}$ is perfect in the sense that it coincides with its commutator, while for $\aliavar{0}{\mf{g}}{\rg}{\rho}$ the abelianisation is $2$-dimensional.

Note also that (\ref{barrel}) describes a flat deformation of Lie algebras depending on parameter $j\in\mb{C}$. It is isomorphic to $\mf{g}$ at generic points, while at orbifold points $j=0$ and $j=1728$ we have the contraction to the solvable Lie algebra of the group of plane isometries: 
$$[\bar{h},\bar{e}]=2\bar{e}, \quad [\bar{h},\bar{f}]=-2\bar{f}, \quad [\bar{e},\bar{f}]=0.$$

It is interesting that the formulae (\ref{isom22}), (\ref{barrel}) are similar to the Riemann sphere case, see Lombardo and Sanders \cite{lombardo2010on}.

Remarkably, this algebra is isomorphic to the Onsager algebra.
The algebra $\On$ was introduced in \cite{onsager1944crystal} as the Lie algebra with complex basis $A_k,\,G_m$, $k\in\mb{Z}, m\in\mb{N}$ and commutation relations
\beq{eq:Onsager}
 \begin{array}{ll}
[G_m,G_n]&=0
\\{[}G_m,A_k]&=2A_{k+m}-2A_{k-m}
\\{[}A_k,A_\ell]&=G_{k-\ell}
\end{array}
\eeq
with $G_{-m}=-G_m$ and $G_0=0$.

\begin{Theorem}
For $\mathfrak g=\slnc[2], \, \Gamma=\SLNZ[2]$ and the adjoint action $\rho$ of $\Gamma \subset \SLNC[2]$ the zero weight automorphic Lie algebra $\aliavar{0}{\mf{g}}{\rg}{\rho}$ is isomorphic to the Onsager algebra. 
An isomorphism $\mathfrak O\to\alia$ is determined by $$A_0\mapsto \bar{h}, \quad A_1\mapsto\frac{(2j-1728)\bar{h}-2\bar{e}+2\bar{f}}{1728}.$$
\end{Theorem}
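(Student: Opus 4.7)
My plan is to realize both Lie algebras inside a common framework — essentially $\slnc[2]$ tensored with an appropriate polynomial ring — and align them via an explicit change of variable on the Hauptmodul $j$.

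First, I will recall the classical realization of the Onsager algebra due to Roan~\cite{roan1991onsager}: $\On$ is isomorphic to the fixed subalgebra of $\slnc[2]\otimes\mb{C}[t,t^{-1}]$ under the involution $x\otimes t^n\mapsto \psi(x)\otimes t^{-n}$, where $\psi\in\Aut{\slnc[2]}$ is determined by $\psi(H)=-H$, $\psi(E)=F$, $\psi(F)=E$. Introducing $a=E+F$, $b=H$, $c=E-F$ together with $u=t+t^{-1}$, $v=t-t^{-1}$ (so that $v^2=u^2-4$), this fixed subalgebra decomposes as
\[
\On \;=\; a\,\mb{C}[u]\;\oplus\; bv\,\mb{C}[u]\;\oplus\; cv\,\mb{C}[u],
\]
and the Onsager generators in~\eqref{eq:Onsager} take the form $A_n=\tfrac{1}{2}a(t^n+t^{-n})+\tfrac{1}{2}c(t^n-t^{-n})$ and $G_m=b(t^m-t^{-m})$.

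Next, I will exploit the arithmetic identity
\[
(j-864)^2-4\cdot 432^2 \;=\; j(j-1728)
\]
to identify $\mb{C}[u]\cong\mb{C}[j]$ via $u\leftrightarrow (j-864)/432$, under which $u^2-4\mapsto j(j-1728)/432^2$. I will then define the candidate isomorphism $\phi:\On\to\aliavar{0}{\mf{g}}{\rg}{\rho}$ on generators by
\[
\phi(a)=\bar h,\quad \phi(bv)=\tfrac{\bar e+\bar f}{432},\quad \phi(cv)=\tfrac{\bar f-\bar e}{432},
\]
extended $\mb{C}[u]$-linearly. The core of the proof is then to verify that $\phi$ respects the Lie bracket in the three nontrivial cases $[a,bv]=-2cv$, $[a,cv]=-2bv$ and $[bv,cv]=2a(u^2-4)$. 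The first two follow immediately from $[\bar h,\bar e]=2\bar e$ and $[\bar h,\bar f]=-2\bar f$; the third is the crux, and amounts to
\[
[\phi(bv),\phi(cv)]=\tfrac{1}{432^2}[\bar e+\bar f,\bar f-\bar e]=\tfrac{2j(j-1728)}{432^2}\bar h = \phi\!\bigl(2a(u^2-4)\bigr),
\]
which invokes both $[\bar e,\bar f]=j(j-1728)\bar h$ and the arithmetic identity. Surjectivity is immediate since $\bar h=\phi(a)$, $\bar e,\bar f$ are recovered from $\phi(bv)\pm\phi(cv)$ up to scalars, and multiplication by $j$ is induced by $\phi(u)$; injectivity follows by comparing $\mb{C}[j]$-module bases on both sides. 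A direct computation gives $\phi(A_0)=\bar h$ and $\phi(A_1)=\tfrac{1}{2}\phi(au)+\tfrac{1}{2}\phi(cv)=\tfrac{(2j-1728)\bar h-2\bar e+2\bar f}{1728}$, confirming the explicit form asserted in the theorem.

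The principal obstacle is locating the correct affine change of variable $j=432u+864$. It is constrained by having to match the orbifold locus $\{j=0,1728\}$ with the branch points $\{u=\pm 2\}$ of the double cover hidden in Roan's realization, but ultimately its correctness rests on the arithmetic coincidence $(j-864)^2-4\cdot 432^2=j(j-1728)$. Once this identification is in place, all remaining checks are routine algebra in $\slnc[2]$.
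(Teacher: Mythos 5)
Your proof is correct and follows essentially the same route as the paper: both pass through Roan's realization of the Onsager algebra as the fixed-point subalgebra of $\mathfrak{sl}(2,\mathbb C)\otimes_{\mathbb C}\mathbb C[t,t^{-1}]$, exhibit it as a free rank-three module over the invariant ring $\mathbb C[u]$, $u=t+t^{-1}$, and match the structure constants with those of Theorem \ref{thm:alias sl2 weight zero} by an affine change of Hauptmodul. Your explicit substitution $j=432u+864$, justified by the identity $(j-864)^2-4\cdot 432^2=j(j-1728)$, is exactly the paper's rescaling of its Hauptmodul $\hat{j}$ written out in the variable $u$, so the only real difference is your choice of intermediate basis ($a$, $bv$, $cv$ rather than the triple $\hat h,\hat e,\hat f$).
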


\begin{proof}
It was shown by Roan \cite{roan1991onsager} that the Onsager algebra is isomorphic to the subalgebra of $\slnc[2]\otimes_{\mathbb C}\mathbb C[z,z^{-1}]$ consisting of all fixed points of the involution
\[M(t)\mapsto \begin{pmatrix}0&1\\1&0\end{pmatrix}M(1/z)\begin{pmatrix}0&1\\1&0\end{pmatrix}.\]
Notice that this Lie algebra does not belong to the class of twisted loop algebras used in the construction of affine Kac-Moody algebras. It does however belong to the class of automorphic Lie algebras \cite{lombardo2005reduction}.

Roan's presentation of $\On$ is the following. Let $$\theta=\Ad\begin{pmatrix}0&i\\i&0\end{pmatrix}\in\Aut{\slnc[2]}$$ and define $\theta_0\in\Aut{\slnc[2]\otimes_{\mathbb C}\mb{C}[z,z^{-1}]}$ by $$\theta_0 (A\otimes p(z))=\theta A\otimes p(z^{-1}).$$ 
Denote the automorphic Lie algebra of fixed points by $\mathfrak A=\left(\slnc[2]\otimes_{\mathbb C}\mb{C}[z,z^{-1}]\right)^{\theta_0}$.
Then one can check that the linear map $\phi:\On\to \mathfrak A$ defined by
$$
\phi(A_k)= \begin{pmatrix}0&z^k\\z^{-k}&0\end{pmatrix}, \quad \phi(G_m)= (z^m-z^{-m})\begin{pmatrix}1&0\\0&-1\end{pmatrix}
$$
for $k\in\mb{Z}$ and $m\in\mb{N}$, is an isomorphism of these Lie algebras.

Moreover, the following elements
$$
\hat e =\frac{z-z^{-1}}{8}\begin{pmatrix}1&-1\\1&-1\end{pmatrix}, \quad
\hat f = \frac{z-z^{-1}}{8}\begin{pmatrix}1&1\\-1&-1\end{pmatrix}, \quad
\hat h=\begin{pmatrix}0&1\\1&0\end{pmatrix}
$$
generate the algebra $\mathfrak A$ as $\mathbb C[\hat{j}]$-module and satisfy the relations
$$
[\hat {h},\hat{e}]=2\hat{e}, \,\, 
[\hat {h},\hat{f}]=-2\hat{f}, \,\,[\hat{e},\hat{f}]=\hat{j}(\hat{j}-1)\, \hat{h},
$$
where
$$
\hat{j}=\frac{z^2+2+z^{-2}}{4}
$$
is a Hauptmodul of the group $\mathbb Z/2\mathbb Z$.

Thanks to Theorem \ref{thm:alias sl2 weight zero} we now find an isomorphism $\mathfrak{A}\to\alia$ by scaling, and thus we obtain an isomorphism $\mathfrak{O}\to\alia$. Since the Lie algebra $\mathfrak{O}$ is generated by $A_0$ and $A_1$, this isomorphism is defined by the image of these elements.
\end{proof}

To study the general case we need some results from the theory of vector-valued modular forms.

\section{Vector-Valued Modular Forms}
\label{sec:vector}

The history of vector-valued modular forms could be traced back to Poincar\'{e} \cite{poincare1882memoire}, but it became of significance after the work of Selberg and Shimura (see the history in Gannon \cite{gannon2014the}).

Vector-valued modular forms (VVMF) are defined as follows.
Consider a representation $\rho:\rg\rightarrow\GL(V)$ of a subgroup $\rg$ of $\SLNZ$. A holomorphic map $f:\mb{H}\rightarrow V$ is an \emph{unrestricted vector-valued modular form} of weight $k$ if 
\begin{equation}
\label{eq:equivariance2}
f(\gamma \tau)=(c\tau+d)^k\rho(\gamma)f(\tau),\quad \forall \gamma=\begin{pmatrix}a&b\\c&d\end{pmatrix}\in\rg,\,\forall \tau\in\uh.
\end{equation}
Alternatively, we can use the notation $f(\tau)\dk{k}$ where (\ref{eq:equivariance2}) is an equivariance condition (see \cite{gunning1962lectures}).

We consider the two subclasses of unrestricted VVMF, denoted $\vvmfk{k}$ and $\vvmfpk{k}$ respectively, one with at most exponential growth at the cusps, and the other with at most polynomial growth at the cusps, in the following sense.

Any cusp $s\in\mb{Q}$ can be mapped to $i\infty$ by an element $g=\begin{pmatrix}a&b\\c&d\end{pmatrix}$ of $\SLNZ[2]$ which allows one to define the parameter 
$$\tau_s=\frac{a\tau+b}{c\tau+d}\in\uh$$ 
which approaches $i\infty$ when $\tau$ approaches the cusp $s$. Note that $\tau_s$ is defined up to addition of integers.

We say that $f:\uh\to V$ has \emph{exponential growth} at a cusp $s$ with parameter $\tau_s$ if 
\beq{eq:taue}
||f(\tau)||<C e^{M|\tau_s|}
\eeq 
for some $M$, and all $\tau_s$ with imaginary part large enough. 
Here $||\cdot ||$ is a norm on the vector space $V$. Its choice is not important since all norms are known to be equivalent in finite dimension.

Similarly, we say that $f$ has \emph{polynomial growth} at the cusp $s$ if it satisfies 
\beq{eq:taup}
||f(\tau)||<C|\tau_s|^N
\eeq 
for some $N$, and all $\tau_s$ with imaginary part large enough.

Usual (scalar-valued) modular forms with polynomial growth are actually modular forms which are holomorphic at the cusps, but this is not true in the vector-valued case.
Indeed, the simplest nontrivial example of a vector-valued modular form is 
\beq{eq:tau1}
\begin{pmatrix}\tau\\1\end{pmatrix}\dmk{1}
\eeq
which is an element of $\vvmfpk{-1}$ where $\rho$ is the natural representation of $\SLNZ[2]$ (see Shimura \cite{shimura1959sur}).

\newcommand{\coc}{\nu}
The component functions of VVMF are logarithmic $q$-series; we make this precise in the following proposition, which is a slight generalisation of a result of Knopp and Mason \cite[Sections 2.2 and 2.3]{knopp2011logarithmic}.

Let $\gamma\in\rg$ be a parabolic element fixing the cusp $s$ such that $\gamma=g^{-1} T^m g$ for some $g\in \SLNZ[2]$, $\tau_s=g\tau$ and $q_s=e^{2\pi i \tau_s/m}$ be the corresponding cusp parameter.
\begin{proposition}
Let $f=(f_1,\ldots,f_d)$ be a vector-valued modular form on $\rg$ of weight $k$. 
Then at any cusp $s$ any component $f_i(\tau)$ has the following convergent logarithmic $q$-series
\beq{eq:series in tau_s}
f_i(\tau)\dk{k}=\sum_{\ell=0}^{N}\tau_s^\ell h_{i,\ell}(q_s)\rd\tau_s^{\frac{k}{2}},\quad  h_{i,\ell}(q)=\sum_{n\in\mb{Z}}a_{i,\ell,n} q^n
\eeq
where $N=d-1$.
The form $f$ has the exponential growth at cusp $s$ if and only if for all $i=1,\ldots,d$ and $\ell=0\ldots,N$ the coefficients $a_{i,\ell,n}=0$ for all $n<M$, while the polynomial growth corresponds to $a_{i,\ell,n}=0$ for all $n<0$.
\end{proposition}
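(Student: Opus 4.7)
The plan is to reduce the statement at an arbitrary cusp $s$ to the cusp at $i\infty$ and then carry out the standard monodromy/Jordan-block analysis in the spirit of Knopp--Mason. First I would use the chosen $g\in\SLNZ[2]$ with $g\cdot s=i\infty$ to transport $f$ to a VVMF on the conjugate subgroup. Concretely, setting $\tilde f(\tau)=f|_k g^{-1}(\tau)$ and $\tilde\rho(\gamma')=\rho(g^{-1}\gamma' g)$ one checks directly that $\tilde f$ satisfies \eqref{eq:equivariance2} for $g\rg g^{-1}$ with representation $\tilde\rho$ and weight $k$, the stabiliser of $i\infty$ contains $T^m$, and the expansion of $f$ at $s$ in $(\tau_s,q_s)$ becomes, by construction, the expansion of $\tilde f$ at $i\infty$ in $(\tau,q)$ with $q=e^{2\pi i\tau/m}$. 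This reduces the proposition to the case $s=i\infty$.

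Next I would exploit translation equivariance at infinity: $\tilde f(\tau+m)=M\tilde f(\tau)$ with $M=\tilde\rho(T^m)=\rho(\gamma)\in\GL(V)$. Pick a matrix logarithm $L\in\End(V)$ with $e^{2\pi iL}=M$ and a Jordan decomposition $L=S+N$ with $S$ semisimple, $N$ nilpotent, $SN=NS$, and $N^{d}=0$. Define
\[F(\tau)=e^{-2\pi i\tau L/m}\,\tilde f(\tau).\]
A direct computation gives $F(\tau+m)=F(\tau)$, so $F$ is a holomorphic $m$-periodic $V$-valued function on $\uh$ and hence admits a Laurent expansion $F(\tau)=\sum_{n\in\mb{Z}}c_n q^n$, $c_n\in V$, convergent on all of $\uh$. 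Recovering $\tilde f(\tau)=e^{2\pi i\tau L/m}F(\tau)$ and using the factorisation
\[e^{2\pi i\tau L/m}=e^{2\pi i\tau S/m}\sum_{\ell=0}^{d-1}\frac{(2\pi i\tau/m)^\ell}{\ell!}\,N^\ell,\]
one sees that the nilpotent part contributes a polynomial in $\tau$ of degree at most $d-1$, while the semisimple part merely shifts the exponents of $q$ by eigenvalues of $S$. Componentwise this yields exactly the form \eqref{eq:series in tau_s} with $N=d-1$; in the restricted modular type case $M$ is unipotent, so $S=0$ and all exponents lie in $\mb{Z}$, as stated.

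The growth dichotomy is then almost formal. Since $|q_s|=e^{-2\pi\im\tau_s/m}$, the monomial $\tau_s^\ell a_{i,\ell,n}q_s^n$ contributes a term of size $O(|\tau_s|^\ell e^{2\pi|n|\im\tau_s/m})$ for $n<0$ and $O(|\tau_s|^\ell)$ for $n\geq 0$; so an exponential estimate \eqref{eq:taue} is equivalent to $a_{i,\ell,n}=0$ for $n<M$, and a polynomial estimate \eqref{eq:taup} is equivalent to $a_{i,\ell,n}=0$ for $n<0$. For the converse, the coefficients $a_{i,\ell,n}$ are extracted from $\tilde f$ via contour integrals in $q_s$ together with comparison of coefficients of $\tau_s^\ell$; a standard Cauchy estimate bounds them in terms of $\sup\|\tilde f\|$ on a small circle $|q_s|=r$, yielding the implication from the growth bound to the coefficient vanishing.

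The main obstacle I anticipate is the bookkeeping in step two: the decomposition $\tilde f(\tau)=\sum_\ell \tau^\ell h_\ell(q)$ is non-canonical unless one works in a basis of $V$ adapted to the Jordan form of $L$, and one must check that the coefficients $h_{i,\ell}$ are then genuinely $q$-Laurent (rather than logarithmic in $q$) series. This is exactly what the Jordan normal form ensures and is the point where the bound $N=d-1$ enters. The rest is a routine conjugation argument reducing the finite-index case to the statement at $i\infty$ treated by Knopp and Mason.
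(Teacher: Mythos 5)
Your proposal is correct, and it reaches the statement by a genuinely different route from the paper. Where the paper, after transporting the cusp $s$ to $i\infty$ by $g$, simply invokes the expansion theorem of Knopp and Mason (\cite[Theorem 2.2]{knopp2011logarithmic}) as a black box, you reprove it: you take a logarithm $L$ of the local monodromy $\rho(\gamma)$, pass to the periodic function $F(\tau)=e^{-2\pi i \tau L/m}\tilde f(\tau)$, and read off the logarithmic $q$-series from the Jordan decomposition of $L$. This makes the bound $N=d-1$ transparent and also surfaces a point the paper leaves implicit, namely that integral exponents in $h_{i,\ell}$ require the local monodromy to be unipotent, which you correctly note holds for representations of restricted modular type. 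The larger divergence is in the growth dichotomy: the paper isolates the coefficient forms $h_{i,\ell}(q_s)\rd\tau_s^{\frac{k}{2}}$ by a Vandermonde argument on the shifts $f(\gamma^j\tau)$, $j=0,\dots,N$, and then excludes infinitely many negative-index coefficients via the Great Picard Theorem, whereas you apply Cauchy estimates to the Fourier coefficients $c_n$ of the genuinely periodic $F$ and let $\im \tau_s\to\infty$, recovering the $a_{i,\ell,n}$ from the $c_n$ through the constant, invertible Jordan data. Your route is more elementary and quantitative --- no appeal to Picard, and the separation of powers of $\tau_s$ is handled automatically by $F$ rather than by the shift trick --- at the cost of the Jordan-basis bookkeeping you flag yourself; the only place to be more explicit is the phrase ``comparison of coefficients of $\tau_s^\ell$'', which is cleanest if you bound the $c_n$ directly and then observe that in the unipotent case $h_\ell=\frac{(2\pi i/m)^\ell}{\ell!}N^\ell F$, so that the $a_{i,\ell,n}$ are fixed linear images of the $c_n$.
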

\begin{proof}
The forms \[f_i(g^{-1}\tau)\rd(g^{-1}\tau)^{\frac{k}{2}},\quad i=1\ldots,d\] span a $T^m$-invariant vector space due to modularity (\ref{eq:equivariance2}) of $f$:
\begin{align*}
f(g^{-1}T^{m}\tau)
=f(\gamma g^{-1}\tau)
=\rho(\gamma)f(g^{-1}\tau).
\end{align*}
Therefore we can apply \cite[Theorem 2.2]{knopp2011logarithmic} and conclude that these components have convergent logarithmic $q$-series
\[
f_i(g^{-1}\tau)\rd(g^{-1}\tau)^{\frac{k}{2}}=\sum_{\ell=0}^{N}\tau^\ell h_{i,\ell}(q)\rd\tau^{\frac{k}{2}},\quad  h_{i,\ell}(q)=\sum_{n\in\mb{Z}}a_{i,\ell,n} q^n.
\]
If we now replace $\tau$ by $\tau_s=g\tau$ we find (\ref{eq:series in tau_s}), proving the first part.

To prove the second part about growth, consider the series (\ref{eq:series in tau_s}) and its shifts:
$$
f_i(\gamma^j\tau)\rd(\gamma^j\tau)^{\frac{k}{2}}=\sum_{\ell=0}^{N}(\tau_s+jm)^\ell h_{i,\ell}(q_s)\rd\tau_s^{\frac{k}{2}}, \quad j=0,\dots, N.
$$
This allows us to express each form $h_{i,\ell}(q_s)\rd\tau_s^{\frac{k}{2}}$ as a linear combination of $f_i(\gamma^j\tau)\rd(\gamma^j\tau)^{\frac{k}{2}}$, $j=0,\dots, N$, with coefficients polynomial in $\tau_s.$
This means that the exponential growth of $f_i(\tau)$ at $\tau=s$ is equivalent to the exponential growth of $h_{i,\ell}(q_s)$ for all $\ell$ at $q_s=0$.

It is clear that if $a_{i,\ell,n}=0$ for all $n<M$ for some $M$, then $h_{i,\ell}(q)=\sum_{n\in\mb{Z}}a_{i,\ell,n} q^n$ has the exponential growth as function of $\tau$.
Conversely, suppose that there exists $\ell$ such that $a_{i,\ell,n}\neq 0$ for infinitely many negative $n.$ Then $q=0$ is an essential singularity of $h_{i,\ell}(q)$ and by the Great Picard Theorem \cite{conway1978functions} at each neighbourhood of $q=0$, $\,h_{i,\ell}$ assumes each complex number with only one possible exception, which contradicts the exponential growth in $\tau.$

The case of the polynomial growth is similar.
\end{proof}

Notice that the corresponding spaces of VVMF $\vvmfz=\bigoplus_{k\in\mb{Z}} \vvmfk{k}$ and $\vvmfpz=\bigoplus_{k\in\mb{Z}} \vvmfpk{k}$ constitute graded modules over the graded algebras of classical modular forms $\mfz$ and $\mfpz$.
Recall that a module $M$ over a graded algebra $A=\bigoplus_{k\ge0}A_k$ is graded if there is a vector space direct sum decomposition $M=\bigoplus_{\ell\in\mb{Z}}M_\ell$ with $M_\ell=0$ for all $\ell<-N$ for some $N\in\mb{N}$
such that $A_kM_\ell\subset M_{k+\ell}$.
The Hilbert series of $M$ is the generating function
\[H(M,t)=\sum_{k\in\mb{Z}}\dim M_k t^k.\]

Consider now a particular class of representations of $\Gamma \subset \SLNC[2]$, the restrictions from the irreducible representations of $\SLNC[2]$, which are all known to be the symmetric powers of the natural representation $V=\mb{C}^2$ of $\SLNC[2]$. 
Let \[\rho_n:\rg\rightarrow \SL(V^n)\] be the corresponding representation, where $V^n$ is the $n+1$-dimensional vector space $\Sym^n \mb{C}^2$, which can be realised as the space of binary forms of degree $n$.

$V^n$-valued modular forms were already introduced by Shimura in 1959 \cite{shimura1959sur} and further studied by Kuga and Shimura \cite{kuga1960on} (see also recent work by Zemel \cite[Proposition 3.1]{zemel2015on}). We will produce a different description with different proof, more suitable for our purposes.

The representations $\rho_n$ of $\rg$ extend to representations of the Lie group \[\bar{\rho}_n:\SLNC[2]\rightarrow \SL(V^n).\] This enables us to use the powerful techniques of Lie theory. In particular, we have the corresponding representations of the Lie algebra 
\[\rd\bar{\rho}_n:\slnc[2]\rightarrow \mf{sl}(V^n)\]
at our disposal, where the exponential map $\exp:\mf{sl}(V)\rightarrow\SL(V)$ intertwines the two: $\exp \rd\bar{\rho}_n=\bar{\rho}_n\exp$. 
For notational convenience we will define
\[H=\rd\bar{\rho}_n\left(\begin{pmatrix}1&0\\0&-1\end{pmatrix}\right),\quad E=\rd\bar{\rho}_n\left(\begin{pmatrix}0&1\\0&0\end{pmatrix}\right),\quad F=\rd\bar{\rho}_n\left(\begin{pmatrix}0&0\\1&0\end{pmatrix}\right). \]
The space $V^n$ decomposes into the direct sum of $1$-dimensional eigenspaces $V^n_k$ of $H$ with eigenvalue $k\in\{-n,-n+2,\ldots,n\}$. This direct sum defines a grading on $V^n$.

Let $\hol$ be the space of holomorphic $V^n$-valued forms on $\uh$ and define $\modaut{n}$ to be the linear endomorphism of $\hol$ given by
\beq{eq:modaut}
\modaut{n}=\exp\left(\tau E\right)\exp\left(2\pi i \frac{E_2(\tau)}{12} F\right)\exp\left(\ln(\dk{1}) H\right).
\eeq
Here the expression $\exp\left(\ln(\dk{1}) H\right)$ is defined as the linear endomorphism of $\hol$, multiplying the eigenspace of $H$ with eigenvalue $k$ by $\dk{k}$.
 Note that Kuga and Shimura used the left factor $\exp\left(\tau E\right)$ in their work \cite{kuga1960on}.

Our key observation
is that the operator $\modaut{n}$
sends any vector of (scalar-valued) modular forms to a vector-valued modular form, and all $V^n$-valued modular form are obtained this way.

\newcommand{\holtriv}{\hol_{\id}}
\newcommand{\holrho}{\hol_{\rho}}
Note that on the space $\hol$ we have two $\rg$-module structures, corresponding to two different actions on $V^n$: the trivial action and the action defined by $\rho_n$. 
We claim that the operator $\modaut[]{n} :\hol\to\hol$ is an intertwiner between these two actions, which preserves the polynomial growth.
 
\begin{Theorem} 
\label{thm:isomorphism vvmf}
The operator $\modaut[]{n}$ establishes an isomorphism of graded $\mfpz[\rg]$-modules
$$V^n\otimes_{\mathbb C} \mfpz \xrightarrow{\modaut[]{n}}\vvmfpz[\rho_n].
$$

In particular, $\vvmfpz[\rho_n]$ is a free $\mfpz[\rg]$-module of dimension $n+1$ with 
Hilbert series
\begin{equation}
\label{genf}
H(\vvmfpz[\rho_n],t)=(t^{-n}+t^{-n+2}+\ldots+t^n)H(\mfpz,t).
\end{equation}
\end{Theorem}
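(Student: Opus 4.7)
The plan is to deduce the isomorphism from a single matrix identity in $\SLNC[2]$ which packages the quasi-modular behaviour of the Eisenstein series $E_2$.

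Set $\Psi(\tau):=\exp(\tau E)\exp\!\left(\tfrac{2\pi i E_2(\tau)}{12}F\right)$ and $D(\lambda):=\exp(\ln\lambda\cdot H)=\diag(\lambda,\lambda^{-1})$, so that the operator $\modaut[]{n}$ is $\bar\rho_n(\Psi(\tau))$ composed with the grading shift $\exp(\ln\dk{1}\cdot H)$. Because $E^2=F^2=0$ in $\slnc[2]$, a direct computation gives $\Psi(\tau)=\left(\begin{smallmatrix}1+\tau\alpha & \tau\\ \alpha & 1\end{smallmatrix}\right)$ with $\alpha=\frac{2\pi i E_2(\tau)}{12}$, consistent with Proposition \ref{prop:conjugation} at $n=1$. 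The crux of the argument is to verify, using the transformation $E_2(\gamma\tau)=(c\tau+d)^2 E_2(\tau)+\frac{12 c(c\tau+d)}{2\pi i}$, the $\SLNC[2]$-identity
\[\Psi(\gamma\tau)=\gamma\cdot\Psi(\tau)\cdot D(c\tau+d)\qquad\text{for every }\gamma=\left(\begin{smallmatrix}a&b\\c&d\end{smallmatrix}\right)\in\rg.\]
This is a short entry-by-entry check in which the inhomogeneous term in $E_2(\gamma\tau)$ conspires with $ad-bc=1$ to cancel the discrepancy between $\gamma\Psi(\tau)$ and $\Psi(\gamma\tau)D(c\tau+d)^{-1}$.

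Granting this identity, I apply the homomorphism $\bar\rho_n$ and use that $\bar\rho_n(D(\lambda))$ acts on the $H$-eigenspace $V^n_k$ by $\lambda^k$ to obtain $\bar\rho_n(\Psi(\gamma\tau))v=(c\tau+d)^k\rho_n(\gamma)\bar\rho_n(\Psi(\tau))v$ for every $v\in V^n_k$. Combined with $h(\gamma\tau)=(c\tau+d)^m h(\tau)$ for $h\in\mfpk{m}$, this shows that $\modaut{n}(v\otimes h)=h(\tau)\bar\rho_n(\Psi(\tau))v\cdot\dk{k+m}$ satisfies the equivariance \eqref{eq:equivariance2} for $\rho_n$ with weight $k+m$. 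Consequently $\modaut[]{n}$ sends $V^n\otimes\mfpz$ into unrestricted VVMF, preserves the $\mb{Z}$-grading (with $V^n$ graded by $H$-eigenvalues), and is tautologically $\mfpz$-linear. Bijectivity is then clear since $\Psi(\tau)$ is pointwise invertible in $\SLNC[2]$: the inverse sends $\tilde g\mapsto\bar\rho_n(\Psi(\tau))^{-1}\tilde g(\tau)$, and running the computation in reverse shows that decomposing the output along the $H$-eigenspaces of $V^n$ produces, in each component, a scalar modular form of the expected weight.

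It remains to show that polynomial growth at the cusps is preserved in both directions. The matrix entries of $\bar\rho_n(\Psi(\tau))^{\pm 1}$ are polynomials in $\tau$ and $E_2(\tau)$; at any cusp $s=g^{-1}\cdot i\infty$, both $\tau=g^{-1}\tau_s$ and $E_2(\tau)$ grow polynomially in the cusp parameter $\tau_s$ (for $E_2$ this is again the quasi-modular transformation, applied with $\gamma=g^{-1}$), so the entries grow polynomially in $\tau_s$, and the polynomial-growth condition on VVMF transfers under $\modaut[]{n}$. The Hilbert series formula (\ref{genf}) is then immediate from the multiplicativity of Hilbert series under tensor products together with $H(V^n,t)=t^{-n}+t^{-n+2}+\cdots+t^n$. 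The main obstacle throughout is the $\SLNC[2]$-identity $\Psi(\gamma\tau)=\gamma\Psi(\tau)D(c\tau+d)$; all other steps are formal consequences of this identity, the multiplicativity of $\bar\rho_n$, and the $H$-grading of $V^n$, and it is precisely this identity that dictates the particular occurrence of $E_2$ in the definition of $\modaut{n}$.
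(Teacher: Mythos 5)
Your proposal is correct and follows essentially the same route as the paper: your identity $\Psi(\gamma\tau)=\gamma\,\Psi(\tau)\,D(c\tau+d)$ is exactly the paper's Lemma \ref{lem:modaut} with the factor $\exp(\ln(\dk{1})H)$ peeled off and its automorphy written explicitly, and the remaining steps (lifting via $\bar\rho_n$, the $H$-grading, invertibility, preservation of polynomial growth, and the Hilbert series count) coincide with the paper's argument. The only difference is cosmetic: you spell out the cusp-growth transfer via the quasi-modular transformation of $E_2$, which the paper merely asserts.
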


\begin{remark}
The freeness problem of the modules of vector-valued modular forms for wide class of Fuchsian groups and their representations is studied in important papers by Marks and Mason \cite{marks2010structure}, Gannon \cite{gannon2014the}, Candelori and Frank \cite{candelori2019vector} and Gottesman \cite{gottesman2020the}. Our result only provides an elementary proof in the simplest situation.
\end{remark}

\begin{proof} We use the following key Lemma.
\begin{lemma}For all $n\in\mb{N}$, $\gamma\in\SLNZ[2]$ and $\tau\in\uh$ we have
\label{lem:modaut}
\begin{equation}
\label{klemma}
\modaut[(\gamma\tau)]{n}=\rho_n(\gamma)\modaut{n}.
\end{equation}
\end{lemma}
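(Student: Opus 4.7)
The plan is to reduce the lemma to a matrix identity in $\SLNC[2]$ and then bootstrap to arbitrary $n$ via the Lie group homomorphism $\bar{\rho}_n$. I would first factor $\modaut{n}$ as $\modaut{n}=M_n(\tau)\,W_n(\tau)$, where
\[M_n(\tau)=\exp(\tau E)\exp\!\bigl(2\pi i\tfrac{E_2(\tau)}{12}F\bigr)=\bar{\rho}_n\bigl(M(\tau)\bigr),\qquad M(\tau)=\begin{pmatrix}1+\tau\,\varepsilon(\tau)&\tau\\ \varepsilon(\tau)&1\end{pmatrix}\in\SLNC[2],\]
with $\varepsilon(\tau):=2\pi i\, E_2(\tau)/12$, and where $W_n(\tau)=\exp(\ln(\dk{1})H)$ is the weight operator multiplying the $k$-eigenspace of $H$ by $\dk{k}$. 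Since $\bar{\rho}_n$ is a homomorphism and $\rho_n(\gamma)=\bar{\rho}_n(\gamma)$ on $\rg\subset\SLNZ[2]$, the lemma reduces to two facts: the $\SLNC[2]$-valued identity $M(\gamma\tau)=\gamma\, M(\tau)\, D_\gamma(\tau)$ with $D_\gamma(\tau)=\diag(c\tau+d,(c\tau+d)^{-1})$, together with a compatibility between $D_\gamma(\tau)$ and the weight operator under the change $\tau\mapsto\gamma\tau$.

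For the matrix identity, I would compute directly. Using $ad-bc=1$, $\gamma\tau=(a\tau+b)/(c\tau+d)$ and the quasi-modular transformation (\ref{eq:e2transformation}) rewritten as $\varepsilon(\gamma\tau)=(c\tau+d)^2\varepsilon(\tau)+c(c\tau+d)$, each of the four entries of $M(\gamma\tau)$ matches the corresponding entry of $\gamma\, M(\tau)\, D_\gamma(\tau)$. The geometric content is transparent: the second column of $M(\tau)$ is Shimura's weight $-1$ form $(\tau,1)^\top$ from (\ref{eq:tau1}), while the correction term in the transformation of $E_2$ is exactly what upgrades the first column $(1+\tau\varepsilon,\varepsilon)^\top$ to a weight $+1$ vector-valued modular form. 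The matrix identity is the joint statement that these two columns are VVMFs of weights $\pm 1$ for the defining representation of $\rg$.

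For the compatibility of the weight operator, I would observe that $\rd(\gamma\tau)^{1/2}=(c\tau+d)^{-1}\dk{1}$ gives $W_n(\gamma\tau)=W_n(\tau)\exp(-\ln(c\tau+d)\,H)$, and that the operators $W_n(\tau)$ and $\exp(\ln(c\tau+d)H)$ commute since both are diagonal on the $H$-eigenspace decomposition of $V^n$. Applying $\bar{\rho}_n$ to the matrix identity and multiplying on the right by $W_n(\gamma\tau)$ then produces
\[\modaut[(\gamma\tau)]{n}=\rho_n(\gamma)\,M_n(\tau)\,\exp(\ln(c\tau+d)H)\,W_n(\tau)\,\exp(-\ln(c\tau+d)H)=\rho_n(\gamma)\,\modaut{n},\]
as required.

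I do not anticipate a serious obstacle: the only delicate point is careful bookkeeping of the several $(c\tau+d)^{\pm 1}$ factors and the distinction between the $\bar{\rho}_n$-multiplier (living inside the group $\bar{\rho}_n(\SLNC[2])$) and the weight operator $W_n$ (living on the ``form side''). The underlying picture is that $M(\tau)$ serves as a ``modular Iwasawa factor'' whose columns trivialize the weight $\pm 1$ VVMFs for the natural representation of $\rg$, and $\modaut{n}$ is the corresponding object for $\bar{\rho}_n=\Sym^n$.
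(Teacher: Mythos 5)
Your proof is correct and follows essentially the same route as the paper: a direct $2\times 2$ computation in the defining representation using the quasi-modular transformation of $E_2$ and $ad-bc=1$, followed by an application of the homomorphism $\bar{\rho}_n$ to promote the identity to general $n$. The only (cosmetic) difference is that you factor out the weight operator $W_n$ and prove a clean $\SLNC[2]$-valued identity $M(\gamma\tau)=\gamma M(\tau)D_\gamma(\tau)$, whereas the paper carries the diagonal $\rd\tau^{\pm 1/2}$ factor through a single chain of matrix manipulations; both bookkeepings check out.
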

\begin{proof} We first prove the relation (\ref{klemma}) for $n=1$. Indeed the transformation property (\ref{eq:e2transformation}) of the Eisenstein series $E_2(\tau)$ says that for $\gamma=\begin{pmatrix}a&b\\c&d\end{pmatrix}$
 \[2\pi i\frac{E_2(\gamma\tau)}{12}= 2\pi i \frac{E_2(\tau)}{12}(c\tau+d)^2+c(c\tau+d).\]
Using this we have
{
\begin{align*}
\modaut[(\gamma\tau)]{1}&=
\begin{pmatrix}1&\gamma\tau\\0&1\end{pmatrix}
\begin{pmatrix}1&0\\2\pi i \frac{E_2(\gamma\tau)}{12}&1\end{pmatrix}
\begin{pmatrix}\rd(\gamma\tau)^{\frac{1}{2}}&0\\0&\rd(\gamma\tau)^{-\frac{1}{2}}\end{pmatrix}
\\&=
\begin{pmatrix}1&\frac{a\tau+b}{c\tau+d}\\0&1\end{pmatrix}
\begin{pmatrix}1&0\\2\pi i \frac{E_2(\tau)}{12}(c\tau+d)^2+c(c\tau+d)&1\end{pmatrix}\begin{pmatrix}(c\tau+d)^{-1}\rd\tau^{\frac{1}{2}}&0\\0&(c\tau+d)\rd\tau^{-\frac{1}{2}}\end{pmatrix}
\\&=
\begin{pmatrix}1&\frac{a\tau+b}{c\tau+d}\\0&1\end{pmatrix}
\begin{pmatrix}1&0\\2\pi i \frac{E_2(\tau)}{12}(c\tau+d)^2+c(c\tau+d)&1\end{pmatrix}
\begin{pmatrix}(c\tau+d)^{-1}&0\\0&c\tau+d\end{pmatrix}
\begin{pmatrix}\rd\tau^{\frac{1}{2}}&0\\0&\rd\tau^{-\frac{1}{2}}\end{pmatrix}
\\&=
\begin{pmatrix}1&\frac{a\tau+b}{c\tau+d}\\0&1\end{pmatrix}
\begin{pmatrix}(c\tau+d)^{-1}&0\\0&c\tau+d\end{pmatrix}
\begin{pmatrix}1&0\\2\pi i \frac{E_2(\tau)}{12}+c(c\tau+d)^{-1}&1\end{pmatrix}
\begin{pmatrix}\rd\tau^{\frac{1}{2}}&0\\0&\rd\tau^{-\frac{1}{2}}\end{pmatrix}
\\&=
\begin{pmatrix}1&\frac{a\tau+b}{c\tau+d}\\0&1\end{pmatrix}
\begin{pmatrix}(c\tau+d)^{-1}&0\\0&c\tau+d\end{pmatrix}
\begin{pmatrix}1&0\\c(c\tau+d)^{-1}&1\end{pmatrix}
\begin{pmatrix}1&0\\2\pi i \frac{E_2(\tau)}{12}&1\end{pmatrix}
\begin{pmatrix}\rd\tau^{\frac{1}{2}}&0\\0&\rd\tau^{-\frac{1}{2}}\end{pmatrix}
\\&=
\begin{pmatrix}1&\frac{a\tau+b}{c\tau+d}\\0&1\end{pmatrix}
\begin{pmatrix}(c\tau+d)^{-1}&0\\0&c\tau+d\end{pmatrix}
\begin{pmatrix}1&0\\c(c\tau+d)^{-1}&1\end{pmatrix}
\begin{pmatrix}1&-\tau\\0&1\end{pmatrix}
\modaut{1}
\\&=
\begin{pmatrix}(1+c(a\tau+b))(c\tau+d)^{-1}&(a\tau+b)-\tau(1+c(a\tau+b))(c\tau+d)^{-1}\\c&(c\tau+d)-c\tau\end{pmatrix}
\modaut{1}
\\&=
\begin{pmatrix}a&b\\c&d\end{pmatrix}
\modaut{1}=\rho_1(\gamma)\modaut{1}.
\end{align*}
}

Now we apply the homomorphism $\bar{\rho}_n$ to see that 
$$\modaut[(\gamma\tau)]{n}
=\bar{\rho}_n(\modaut[(\gamma\tau)]{1})
=\bar{\rho}_n(\gamma\modaut{1})
=\bar{\rho}_n(\gamma)\bar{\rho}_n(\modaut{1})
={\rho}_n(\gamma)\modaut{n},$$
which completes the proof of Lemma \ref{lem:modaut}.
\end{proof}

Observe now that $\modaut[]{n}$ preserves the grading, the polynomial growth and is invertible, which implies the first part of the claim.

The Hilbert series of the vector-valued and scalar-valued modular forms are related by (\ref{genf})
which can be verified by mapping a basis $v_{-n}\otimes1,v_{-n+2}\otimes1,\ldots,v_n\otimes1$ of $V^n\otimes \mfpz$, where $H v_k=kv_k$, to a basis of $\vvmfpz[\rho_n]$, as free $\mfpz$-modules.
\end{proof}

\begin{remark}
The right column of $\modaut[]{n}$ is $(\tau^n,\tau^{n-1},\ldots,1)^t\dmk{n}$, which is a VVMF playing a central role in the work of Shimura \cite{shimura1959sur}. It is interesting that it is holomorphic on the plane $\mb{C}$, rather than just the half plane. In contrast, there is no scalar modular form that can be extended holomorphically to any real number: the real line is a natural boundary. 

Knopp and Mason studied this phenomenon \cite{knopp2013vector} and found that if we restrict to the representations where $\rho(T)$ has eigenvalues with absolute value one, then $(\tau^n,\tau^{n-1},\ldots,1)\dmk{n}$ is, up to isomorphism, the only VVMF that does not have the real line as a natural boundary.
In that sense it is exceptional.
\end{remark}

\section{Automorphic Lie Algebras of Restricted Modular Type}
\label{sec:automorphic}

Let now $\rg \subset \SLNZ[2]$ be as before a finite index subgroup of the modular group, and consider the representations $\rho: \rg \rightarrow\Aut{\mf{g}}$ restricted from the representations $\bar{\rho}:\SLNC[2]\rightarrow\Aut{\mf{g}}.$ 

Such representations are related to the embeddings of $\SLNC[2]$ into automorphism groups of simple Lie algebras, which are classified. 
The problem is equivalent to the classification of nilpotent orbits: orbits of nilpotent elements in $\mf{g}$ under the action of the connected component $G=\Aut{\mf{g}}^0$ of the automorphism group. 
The latter classification is well described and listed in \cite{collingwood1993nilpotent}. We first describe the equivalence between the two classifications and then briefly recap the theorems of Jacobson-Morozov and of Kostant that enable the classification.

To classify Lie group morphisms $\bar{\rho}:\SLNC[2]\rightarrow\Aut{\mf{g}}$ when $\mf{g}$ is semisimple, it is sufficient to classify Lie algebra morphisms $\phi:\slnc[2]\rightarrow{\mf{g}}$. Indeed, since $\SLNC[2]$ is connected and $\bar{\rho}$ continuous, one only needs to consider maps $\bar{\rho}:\SLNC[2]\rightarrow\Aut{\mf{g}}^0=G$. The tangent map is $\rd\bar{\rho}:\slnc[2]\rightarrow T_1G$, and by semisimplicity of $\mf{g}$ we have $T_1G \cong \mf{g}$ \cite[Propositions 1.120 and 1.121]{knapp2002lie}. The exponential map intertwines these maps in the sense that $\bar{\rho}(\exp(X))=\exp(\rd\bar{\rho}(X))$. Since $\exp(\slnc[2])$ generates $\SLNC[2]$, we see that $\bar{\rho}$ is determined by its derivative $\rd\bar{\rho}$. Vice versa, any homomorphism $\phi:\slnc[2]\rightarrow{\mf{g}}$ defines a homomorphism $\bar{\rho}:\SLNC[2]\rightarrow\Aut{\mf{g}}$ by $\bar{\rho}(\exp(X))=\exp(\ad(\phi(X)))$.

The $G$-orbits of $\Hom(\slnc[2],\mf{g})$ are in turn in one-to-one correspondence with nilpotent orbits, by the following theorems, as outlined in \cite[Section 3.2]{collingwood1993nilpotent}. A standard triple will be a triple of elements $(H,E,F)$ of a Lie algebra with Lie brackets $[H,E]=2E$, $[H,F]=-2F$ and $[E,F]=H$, hence spanning a subalgebra isomorphic to $\slnc[2]$. The element $E$ is the nilpositive element of the triple.
\begin{Theorem}[Jacobson-Morozov] Let $\mf{g}$ be a complex semisimple Lie algebra. If $X$ is
a nonzero nilpotent element of $\mf{g}$, then it is the nilpositive element of a standard
triple. Equivalently, for any nilpotent element $X$, there exists a homomorphism $\phi:\slnc\rightarrow\mf{g}$ such that $\phi\left(\begin{pmatrix}0&1\\0&0\end{pmatrix}\right)= X$.
\end{Theorem}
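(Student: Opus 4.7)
This is the classical Jacobson--Morozov theorem. The two formulations are equivalent because a Lie algebra homomorphism $\phi : \slnc[2] \to \mf{g}$ is determined by the images of the standard generators, and the defining relations of $\slnc[2]$ are exactly the standard triple relations. So the task reduces to producing $H, F \in \mf{g}$ with $(H, X, F)$ a standard triple.

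The plan is to build the triple in two stages. Stage 1: find $H$ with $[H, X] = 2X$. This reduces to showing $X \in \operatorname{im}(\ad X)$, since any $Z$ with $[Z, X] = X$ yields $H := 2Z$. Using non-degeneracy and invariance of the Killing form $\kappa$ on the semisimple algebra $\mf{g}$, one has $(\operatorname{im} \ad X)^{\perp} = \ker \ad X$, the centralizer of $X$. Hence $X \in \operatorname{im}\ad X$ if and only if $\kappa(X, Y) = 0$ whenever $[X, Y] = 0$. For such $Y$ the operators $\ad X$ and $\ad Y$ commute, and because $X$ is nilpotent so is $\ad X$; thus $\ad X \circ \ad Y$ is nilpotent and has trace zero, giving the required vanishing.

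Stage 2: given $H$ as above, construct $F$ with $[X, F] = H$ and $[H, F] = -2F$. I would analyze $\mf{g}$ as a module for the two-dimensional solvable subalgebra $\mb{C} H \oplus \mb{C} X$. From $[H, X] = 2X$ one obtains $[\ad H, \ad X] = 2 \ad X$, so $\ad X$ shifts $\ad H$-weights by $+2$. Combined with the nilpotency of $\ad X$ and Lie's theorem, this forces the eigenvalues of $\ad H$ on $\mf{g}$ to be integers and yields a suitable weight space decomposition; tracking the maps $\ad X : \mf{g}_k \to \mf{g}_{k+2}$ one shows that $H$ lies in $[X, \mf{g}_{-2}]$, producing a preliminary $F_0$ with $[X, F_0] = H$. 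A correction of $F_0$ by an element of $\ker \ad X \cap \mf{g}_{-2}$ then enforces $[H, F] = -2F$ exactly.

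The main obstacle is Stage 2: since $\ad H$ is not \emph{a priori} known to act semisimply, the usable weight structure must be carefully extracted from the module-theoretic setup together with the nilpotency of $\ad X$. Once $(H, X, F)$ is obtained, the homomorphism sending the standard generators of $\slnc[2]$ to this triple realizes $X$ as the required nilpositive element.
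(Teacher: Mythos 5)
The paper does not prove this statement: it is quoted as the classical Jacobson--Morozov theorem and used as a black box, with the reader referred to Collingwood--McGovern for the argument. So your attempt can only be judged on its own merits, not against an in-paper proof.

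Your Stage 1 is correct and complete: the identity $(\mathrm{im}\,\ad X)^{\perp}=\ker \ad X$ from invariance of the Killing form, together with the vanishing of $\tr(\ad X\,\ad Y)$ for $Y$ in the centralizer (commuting with a nilpotent operator), does give $X\in\mathrm{im}\,\ad X$ and hence an $H$ with $[H,X]=2X$. Stage 2, however, is where all the real content of the theorem lives, and your sketch leaves both of its hard points unestablished. First, to produce $F_0$ with $[X,F_0]=H$ you need $H\in\mathrm{im}\,\ad X$, and this does \emph{not} follow from Stage 1: the element $H=2Z$ you construct there is only determined up to $\ker\ad X$, and a wrong choice genuinely fails. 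For instance in $\mf{g}=\slnc[2]\times\slnc[2]$ with $X=(e,0)$, the element $H'=(h,h)$ satisfies $[H',X]=2X$ but does not lie in $\mathrm{im}\,\ad X$; one must either correct $H$ (e.g.\ by passing to its semisimple part and then arguing that the degree-zero component of the centralizer is Killing-orthogonal to it) or run an induction on $\dim\mf{g}$. Your phrase ``tracking the maps $\ad X:\mf{g}_k\to\mf{g}_{k+2}$ one shows that $H$ lies in $[X,\mf{g}_{-2}]$'' asserts exactly this missing step without supplying it, and moreover presupposes the eigenspace decomposition of $\ad H$, whose existence (semisimplicity and integrality of the eigenvalues of $\ad H$) is itself part of what must be proved. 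Second, the final correction of $F_0$ by an element of $\ker\ad X$ requires that $\ad H+2$ be invertible on $\ker\ad X$, i.e.\ that the eigenvalues of $\ad H$ on the centralizer of $X$ are nonnegative integers; this is the heart of Morozov's lemma and again needs a separate argument exploiting the nilpotency of $\ad X$. You candidly name the obstacle (``$\ad H$ is not a priori known to act semisimply'') but do not overcome it, so the proposal as written is an outline of the standard strategy rather than a proof.
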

\begin{Theorem}[Kostant] Let $\mf{g}$ be a complex semisimple Lie algebra. Any two standard
triples $(H,E, F)$ and $(H',E, F')$ with the same nilpositive element are conjugate
by an element of $\Aut{\mf{g}}^0$.
\end{Theorem}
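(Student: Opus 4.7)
The plan is to realise the conjugating element as an inner automorphism coming from the centraliser $Z_G(E) \subset G := \Aut{\mf{g}}^0$, so that it automatically lies in the identity component. The argument splits into two stages: first conjugate $H'$ to $H$ by a suitable unipotent element of $Z_G(E)$, then show that the nilnegative element of any standard triple is rigidly determined by the other two, forcing the conjugated $F'$ to coincide with $F$.

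View $\mf{g}$ as an $\slnc[2]$-module via $\ad$ applied to the fixed triple $(H, E, F)$ and decompose into irreducible summands $V(n_i)$. Since $\mf{z}_{\mf{g}}(E) = \ker(\ad E)$ is exactly the span of the highest weight vectors of these summands, its $\ad H$-grading lives in non-negative degrees: $\mf{z}_{\mf{g}}(E) = \bigoplus_{k \geq 0} \mf{z}_{\mf{g}}(E)_k$. Set $\mf{n} := \bigoplus_{k \geq 1} \mf{z}_{\mf{g}}(E)_k$, a nilpotent subalgebra (grading increases under bracket), which exponentiates to a unipotent subgroup $U \subset Z_G(E)$. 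The weight-uniqueness lemma is now immediate: if $(H, E, F)$ and $(H, E, F'')$ are both standard triples, then $F - F'' \in \mf{z}_{\mf{g}}(E)$ has $\ad H$-weight $-2$, hence vanishes by non-negativity of weights on $\mf{z}_{\mf{g}}(E)$.

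The heart of the proof is to show that $H' - H$ lies in $\mf{n}$ and can be eliminated by a product of adjoint exponentials of elements of $\mf{n}$. Indeed, $H' - H \in \mf{z}_{\mf{g}}(E)$ since $[H' - H, E] = 2E - 2E = 0$, and $H' - H = [E, F' - F] \in \mathrm{im}(\ad E)$ from $H' = [E, F']$. A direct check on each $V(n_i)$ shows $\ker(\ad E) \cap \mathrm{im}(\ad E) = \mf{n}$. Decompose $H' - H = X_{k_0} + X_{k_0+1} + \ldots$ with $X_{k_0} \neq 0$ and $k_0 \geq 1$, and choose $Y = -\frac{1}{k_0} X_{k_0} \in \mf{z}_{\mf{g}}(E)_{k_0}$, so that $[Y, H] = X_{k_0}$. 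A term-by-term expansion of $\exp(-\ad Y)(H')$ exhibits exact cancellation of $X_{k_0}$ against $-[Y, H]$, while every Baker--Campbell--Hausdorff correction lands in $\ad H$-degree $\geq 2k_0 > k_0$. Hence the lowest non-vanishing grade of the new $H' - H$ strictly increases. Since $\mf{g}$ is finite-dimensional, $\mf{n}_k$ vanishes for large $k$, so iterating produces $g \in Z_G(E)$ with $\Ad(g)(H') = H$ after finitely many steps. Applying the uniqueness lemma to the standard triple $(H, E, \Ad(g)(F'))$ then yields $\Ad(g)(F') = F$.

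I expect the main technical point to be the bookkeeping in the iterative step: one needs every successive conjugation to remain inside $Z_G(E)$ (automatic from $Y \in \mf{z}_{\mf{g}}(E)$) and a uniform grade-shift ensuring termination. Both are handled by the single observation that $\ad Y$ raises the $\ad H$-degree by the strictly positive integer $k_0$, so higher commutators cannot interfere with the cancellation at degree $k_0$, and the process terminates once the bounded grading is exhausted.
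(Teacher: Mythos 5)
Your proof is correct, and in fact the paper does not prove this statement at all: it quotes Kostant's theorem as a classical result, citing Collingwood--McGovern, Section 3.2. Your argument (non-negativity of the $\ad H$-grading on $\mf{z}_{\mf{g}}(E)$, the identification $\ker(\ad E)\cap\mathrm{im}(\ad E)=\mf{n}$, the graded unipotent conjugation killing $H'-H$ degree by degree, and the weight $-2$ rigidity of the nilnegative element) is precisely the standard proof found in that reference, so there is nothing in the paper to compare it against.
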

If $\mc{N}$ is the set of $G$-orbits of nilpotent elements of $\mf{g}$, then there is a map
\[G\backslash\Hom(\slnc[2],\mf{g})\rightarrow\mc{N},\quad G\cdot\phi\mapsto G\cdot\phi\left(\begin{pmatrix}0&1\\0&0\end{pmatrix}\right).\]
It is clear that this map is well defined. It is surjective by the Theorem of Jacobson-Morozov. It is injective by the Theorem of Kostant.

The classification of nilpotent orbits $\mc{N}$ is described in \cite{collingwood1993nilpotent} (see the lists in Section 8.4 for all exceptional Lie algebras). At the same time, this classifies the embeddings $\bar{\rho}:\SLNC[2]\rightarrow\Aut{\mf{g}}$ needed for the current research.

Let $\bar{\rho}:\SLNC[2]\rightarrow\Aut{\mf{g}}$ and $\rd\bar{\rho}:\slnc[2]\rightarrow\Der{\mf{g}}$ be a representation and its derivative. Introduce an analogue of operator (\ref{eq:modaut}) as
\beq{eq:modautlie}
\modaut{}=\exp\left(\tau E\right)\exp\left(2\pi i \frac{E_2(\tau)}{12} F\right)\exp\left(\ln(\dk{1}) H\right),
\eeq
\beq{eq:HEF}
H=\rd\bar{\rho}\left(\begin{pmatrix}1&0\\0&-1\end{pmatrix}\right),\quad E=\rd\bar{\rho}\left(\begin{pmatrix}0&1\\0&0\end{pmatrix}\right),\quad F=\rd\bar{\rho}\left(\begin{pmatrix}0&0\\1&0\end{pmatrix}\right).
\eeq 
\begin{Theorem}
\label{thm:aliahz}
The operator $\modaut[]{}$ given by (\ref{eq:modautlie}) establishes an isomorphism of graded Lie algebras and graded $\mfpz$-modules
\beq{isom11}
\mf{g}\otimes_{\mathbb C}\mfpz\xrightarrow{\modaut[]{}}\aliapz
\eeq
where $\mf{g}$ is endowed with the grading defined by the eigenvalues of $H$.

Similarly, in the weakly holomorphic case we have the isomorphism of the Lie algebras and $\mb{Z}$-graded $\mfk{0}$-modules
\beq{isom21}
\mf{g}\otimes_{\mathbb C}\mfz\xrightarrow{\modaut[]{}}\aliaz.
\eeq
\end{Theorem}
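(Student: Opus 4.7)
The plan is to reduce Theorem \ref{thm:aliahz} to the vector-valued case already established in Theorem \ref{thm:isomorphism vvmf}, by exploiting the fact that $\mf{g}$ becomes an $\slnc[2]$-module via $\rd\bar\rho$.

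First I would observe that the operator $\modaut{}$ in \eqref{eq:modautlie} equals $\bar\rho$ applied to the $\SLNC[2]$-valued expression $\modaut{1}$ from Proposition \ref{prop:conjugation}, since each of the three exponential factors transforms under $\bar\rho$ via the identity $\bar\rho(\exp X)=\exp(\rd\bar\rho(X))$. Applying the key Lemma \ref{lem:modaut} in the case $n=1$ and then $\bar\rho$ yields the intertwining
\[
\modaut[(\gamma\tau)]{}=\bar\rho(\modaut[(\gamma\tau)]{1})=\bar\rho(\gamma\,\modaut{1})=\rho(\gamma)\,\modaut{}
\]
as automorphisms of $\mf{g}$; consequently each image $\modaut{}(X)\,g(\tau)$ satisfies the modularity \eqref{modk} of the appropriate weight.

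Next I would decompose $\mf{g}$ as an $\slnc[2]$-module via $\rd\bar\rho$ into irreducible subrepresentations $\mf{g}=\bigoplus_i V^{n_i}$. This decomposition is $\rg$-equivariant under $\rho$ and respects the $H$-grading, so
\[
\aliapz=\bigoplus_i\vvmfpz[\rho_{n_i}]\qquad\text{and}\qquad\mf{g}\otimes_\mathbb{C}\mfpz=\bigoplus_i V^{n_i}\otimes_\mathbb{C}\mfpz.
\]
On each summand $\modaut{}$ restricts to the operator $\modaut[]{n_i}$ of Theorem \ref{thm:isomorphism vvmf}, which supplies a grading-preserving isomorphism of graded $\mfpz$-modules. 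Summing over $i$ produces the module isomorphism \eqref{isom11}; the identical argument with $\mfz$ in place of $\mfpz$ gives \eqref{isom21}, the only change being that the exponential-growth condition at each cusp is preserved by $\modaut{}$ on each irreducible summand.

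Finally, the Lie algebra compatibility is essentially automatic once the module isomorphism is in hand. Since $\modaut{}\in\Aut{\mf{g}}$ pointwise in $\tau$,
\[
[\modaut{}(X)g(\tau),\modaut{}(Y)h(\tau)]=[\modaut{}(X),\modaut{}(Y)]\,g(\tau)h(\tau)=\modaut{}([X,Y])\,g(\tau)h(\tau),
\]
which is precisely the image of $[X,Y]\otimes gh=[X\otimes g,Y\otimes h]$, so $\modaut{}$ is a Lie algebra homomorphism. The main technical point to watch is that the formal factor $\exp(\ln(\dk{1})H)$ inside $\modaut{}$ encodes the grading on $\mf{g}$ so that the $H$-eigenvalue of $X\in\mf{g}$ matches the weight contribution to $\modaut{}(X)g(\tau)$; this bookkeeping is precisely what Theorem \ref{thm:isomorphism vvmf} delivers on each irreducible summand, which is why invoking that theorem is the crux of the reduction.
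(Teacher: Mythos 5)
Your proposal is correct and follows essentially the same route as the paper: the intertwining property comes from Lemma \ref{lem:modaut} (obtained by applying $\bar\rho$ to the $n=1$ case), the module isomorphism is reduced to Theorem \ref{thm:isomorphism vvmf} via the decomposition $\bar\rho=\bigoplus_i\bar\rho_{n_i}$, and the Lie bracket is preserved because $\modaut{}$ acts pointwise by Lie algebra automorphisms. You have merely spelled out details that the paper's two-sentence proof leaves implicit.
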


\begin{proof}
Indeed, by Lemma  \ref{lem:modaut} the operator $\modaut[]{}$ intertwines the action of $\Gamma$ on both sides, preserves the growth condition and is invertible.

If we ignore the Lie algebra structure, then we can write $\bar{\rho}=\bigoplus_{i}\bar{\rho}_{n_i}$ and thus $\modaut{}=\bigoplus_{i}\modaut{n_i},$
so the claim also follows from Theorem \ref{thm:isomorphism vvmf}.
\end{proof}
Theorem \ref{thm:aliahz} can be viewed as a ``modular analogue'' of the following result of Kac (see \cite[Proposition 8.5]{kac1990infinite}). 
In Theorems \ref{thm:alias odd group} and \ref{thm:alias even group}, we will prove an even closer analogue of Kac's result concerning functions rather than forms.
\begin{Theorem}[Kac]
Let $\sigma$ be an inner automorphism of order $n$ of a simple finite-dimensional Lie algebra $\mf{g}$ of the form
\[\sigma=\exp\left(\frac{2\pi i}{n}\ad(h)\right).\]
Then the automorphism $\modaut[(z)]{}$ of the Lie algebra $\mf{g}\otimes_{\mathbb C}\mb{C}[z,z^{-1}]$ defined by \[\modaut[(z)]{}=\exp(\ln(z)\ad(h))\] 
establishes an isomorphism between $\mf{g}\otimes_{\mathbb C}\mb{C}[z^n,z^{-n}]$ and the twisted Lie algebra $L(\mf{g},\sigma)$.
\end{Theorem}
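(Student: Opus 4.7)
The plan is to exploit the $\mb{Z}$-grading on $\mf{g}$ that the hypothesis $\sigma^n=\id$ forces. Since $\sigma=\exp(\tfrac{2\pi i}{n}\ad(h))$ has order $n$, the derivation $\ad(h)$ must have integer eigenvalues, producing a decomposition $\mf{g}=\bigoplus_{k\in\mb{Z}}\mf{g}_k$ with $\mf{g}_k=\ker(\ad(h)-k\,\Id)$. Writing $\varepsilon=e^{2\pi i/n}$, the $\varepsilon^{j}$-eigenspace of $\sigma$ is $\bigoplus_{k\equiv j\pmod{n}}\mf{g}_k$, and the twisted loop algebra decomposes accordingly as $L(\mf{g},\sigma)=\bigoplus_{m\in\mb{Z}}\mf{g}_{\overline{m}}\otimes z^m$, where $\overline{m}$ denotes $m$ modulo $n$.

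First I would reinterpret $\modaut[(z)]{}=\exp(\ln(z)\ad(h))$ as the $\mb{C}$-linear operator on $\mf{g}\otimes_{\mb{C}}\mb{C}[z,z^{-1}]$ acting on a homogeneous vector $x\otimes z^m$ with $x\in\mf{g}_k$ by the rule $x\otimes z^m\mapsto x\otimes z^{m+k}$. Interpreted this way there is no ambiguity from the multivalued $\ln(z)$, and the operator is manifestly a bijection. I would then verify that it is a Lie algebra automorphism using only the grading property $[\mf{g}_j,\mf{g}_l]\subseteq\mf{g}_{j+l}$: for $x\in\mf{g}_j$ and $y\in\mf{g}_l$,
\begin{equation*}
\modaut[(z)]{}\bigl([x\otimes z^m,\,y\otimes z^p]\bigr)=[x,y]\otimes z^{m+p+j+l}=[x\otimes z^{m+j},\,y\otimes z^{p+l}],
\end{equation*}
which equals $[\modaut[(z)]{}(x\otimes z^m),\,\modaut[(z)]{}(y\otimes z^p)]$.

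The core step is then to identify the image of $\mf{g}\otimes_{\mb{C}}\mb{C}[z^n,z^{-n}]$ under $\modaut[(z)]{}$ as $L(\mf{g},\sigma)$. A basis element $x\otimes z^{nm}$ with $x\in\mf{g}_k$ is sent to $x\otimes z^{nm+k}$, and since $\sigma(x)=\varepsilon^k x=\varepsilon^{nm+k}x$ this image satisfies the equivariance $f(\varepsilon z)=\sigma f(z)$ and therefore lies in $L(\mf{g},\sigma)$. Conversely, any element of $L(\mf{g},\sigma)$ splits into pieces $x\otimes z^p$ with $x\in\mf{g}_k$ and $k\equiv p\pmod{n}$; writing $p=k+nm$ makes each piece exactly $\modaut[(z)]{}(x\otimes z^{nm})$, giving surjectivity, while injectivity is immediate from the explicit formula.

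The only real obstacle is careful bookkeeping between the integer grading index $k$ and its reduction modulo $n$ determining the $\sigma$-eigenspace, together with the need to read $\exp(\ln(z)\ad(h))$ as a single-valued $\mb{C}$-linear operator on the graded tensor product despite $\ln(z)$ being multivalued as a scalar function. Once these bookkeeping points are settled, the proof is the direct graded-algebra analogue of the argument behind Theorem \ref{thm:aliahz}, with the monomial shift $\modaut[(z)]{}$ playing the role of the modular intertwiner~(\ref{eq:modautlie}).
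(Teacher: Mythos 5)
Your proof is correct and follows essentially the same route as the paper: the paper's one-line argument rests on the intertwining relation $\modaut[(\varepsilon z)]{}=\sigma\modaut[(z)]{}$, and your eigenspace bookkeeping (reading $\exp(\ln(z)\ad(h))$ as the shift $x\otimes z^m\mapsto x\otimes z^{m+k}$ for $x\in\mf{g}_k$ and using $\sigma|_{\mf{g}_k}=\varepsilon^k\,\id$) is exactly that relation made explicit on homogeneous components. Nothing is missing.
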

The proof follows from the relation $\modaut[(e^{2\pi i/n} z)]{}=\sigma\modaut[(z)]{}$, which is analogous to our relation $\modaut[(\gamma \tau)]{}=\rho(\gamma)\modaut[(\tau)]{}$ from Lemma \ref{lem:modaut}. 

\section{The Zero Weight Case}
\label{sec:zero}

In contrast to $\aliapz$ and $\aliaz$, the Lie algebra structures of the weight zero subalgebras $\aliapk{0}$ and $\aliak{0}$ depend on the structure of $\mfpz$ and $\mfz$ respectively (more so than merely as module), and thus resist uniform description for general subgroups $\rg$ of $\SLNZ[2]$. Nonetheless, if $\mfpz$ is understood for a specific group, Theorem \ref{thm:aliahz} can be used to give explicit structure constants. 

Theorem \ref{thm:aliahz} shows that $\aliaz\cong \mf{g}\otimes_{\mathbb C} \mfz$ as graded $\mfz$-modules and graded Lie algebras. The Lie algebra grading on $\mf{g}$ is given by its weight decomposition $\mf{g}=\bigoplus_{k\in\mb{Z}} \mf{g}_{k}$ as $\SLNC[2]$-module, so as a corollary we have
\beq{corol}
\aliak{0}\cong\bigoplus_{k\in\mb{Z}} \mf{g}_{-k}\otimes_{\mathbb C} \mfk{k}.
\eeq
By the same reasoning we also have $\aliapk{0}=\bigoplus_{k\in\mb{Z}} \mf{g}_{-k}\otimes_{\mathbb C} \mfpk{k}.$
We will describe the structure of these Lie algebras in more detail, starting with the case of polynomial growth. Since $\mfpk{k}$ is finite-dimensional, $\mfpk{k}=\{0\}$ for $k<0$ and $\mfpk{0}$ consists only of constant functions, for any finite index subgroup $\rg$ of $\SLNZ[2]$ \cite{gunning1962lectures}, we see that the automorphic Lie algebra is the complex finite-dimensional graded Lie algebra
$$\aliapk{0}=\mf{g}_{0}\oplus\bigoplus_{n\in\mb{N}} \mf{g}_{-n}\otimes_{\mathbb C} \mfpk{n}$$
(where we identify $\mf{g}_{0}$ with constant functions $\uh\to\mf{g}_0$).
The subalgebra $\mf{g}_{0}=\{A\in\mf{g}\,|\,HA=0\}$ is reductive , i.e. of the form $\mf{g}_{0}=\mf{z}(\mf{g}_0)\oplus \mf{g}_0'$ where $\mf{z}(\mf{g}_0)$ is the centre of $\mf{g}_0$ and the derived subalgebra $\mf{g}_0'=[\mf{g}_{0},\mf{g}_{0}]$ is semisimple (see e.g. \cite[Lemma 2.1.2]{collingwood1993nilpotent}). Thus we find
\begin{Theorem}
The automorphic Lie algebra $\aliapk{0}$ of modular type with polynomial growth is a complex finite-dimensional Lie algebra with Levi decomposition
$$\aliapk{0}=\mf{r}\oplus\mf{s}$$
where the radical $\mf{r}=\mf{z}(\mf{g}_0)\oplus\bigoplus_{n\in\mb{N}} \mf{g}_{-n}\otimes_{\mathbb C} \mfpk{n}$ and the Levi subalgebra
$\mf{s}=\mf{g}_{0}'.$
\end{Theorem}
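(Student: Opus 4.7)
The plan is to start from the direct sum decomposition $\aliapk{0}=\mf{g}_{0}\oplus\bigoplus_{n\in\mb{N}} \mf{g}_{-n}\otimes_{\mathbb C} \mfpk{n}$ established in the preceding discussion, together with the reductive splitting $\mf{g}_0=\mf{z}(\mf{g}_0)\oplus\mf{g}_0'$, which immediately gives $\aliapk{0}=\mf{r}\oplus\mf{s}$ as vector spaces. Finite-dimensionality is then automatic: the $H$-grading on $\mf{g}$ is bounded since $\dim\mf{g}<\infty$, and for each $n$ the space $\mfpk{n}$ of holomorphic modular forms of weight $n$ for a finite index subgroup of $\SLNZ[2]$ is classically finite-dimensional. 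What remains is to recognise this vector space decomposition as a Levi decomposition, i.e.\ to show that $\mf{s}$ is a semisimple subalgebra and $\mf{r}$ a solvable ideal.

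Using the bracket identity $[X\otimes f,Y\otimes g]=[X,Y]\otimes fg$ and the grading relations $[\mf{g}_0,\mf{g}_{-n}]\subset\mf{g}_{-n}$ and $[\mf{g}_{-k},\mf{g}_{-\ell}]\subset\mf{g}_{-(k+\ell)}$, I would first check that $\mf{n}:=\bigoplus_{n>0}\mf{g}_{-n}\otimes_{\mathbb C}\mfpk{n}$ is an ideal of $\aliapk{0}$. Iterating the bracket computation shows that the $r$-th term of the lower central series of $\mf{n}$ is contained in $\bigoplus_{n\ge r+1}\mf{g}_{-n}\otimes_{\mathbb C}\mfpk{n}$, which vanishes as soon as $r+1$ exceeds the top $H$-weight on $\mf{g}$; hence $\mf{n}$ is nilpotent. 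The subalgebra $\mf{s}=\mf{g}_0'$, embedded in $\aliapk{0}$ as constant maps $\uh\to\mf{g}_0'$, is semisimple since it is the derived subalgebra of a reductive Lie algebra.

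Finally, I would verify that $\mf{r}=\mf{z}(\mf{g}_0)\oplus\mf{n}$ is a solvable ideal. The ideal property reduces to $[\mf{g}_0',\mf{z}(\mf{g}_0)]=0$ (by definition of the centre) combined with the previously noted $[\aliapk{0},\mf{n}]\subset\mf{n}$. Solvability follows because $[\mf{r},\mf{r}]\subset\mf{n}$ — the only additional term, $[\mf{z}(\mf{g}_0),\mf{z}(\mf{g}_0)]$, vanishes — and $\mf{n}$ is nilpotent, hence solvable. Since $\aliapk{0}=\mf{r}\oplus\mf{s}$ with $\mf{r}$ a solvable ideal and $\mf{s}$ a semisimple complement, this is by definition a Levi decomposition, and $\mf{r}$ coincides with the radical, since any strictly larger solvable ideal would intersect the semisimple $\mf{s}$ nontrivially. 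The main obstacle is really only the nilpotency of $\mf{n}$, which reduces to bookkeeping with the $H$-grading and the finiteness of $\dim\mf{g}$; everything else is a direct consequence of the structure of reductive Lie algebras.
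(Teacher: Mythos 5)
Your proposal is correct and follows essentially the same route as the paper: both take the already-established vector space decomposition, show that $\mf{r}$ is a solvable ideal (the paper attributes this to "the grading", which you usefully unpack via the nilpotency of $\bigoplus_{n>0}\mf{g}_{-n}\otimes_{\mathbb C}\mfpk{n}$), and then use the semisimplicity of $\mf{s}\cong\aliapk{0}/\mf{r}$ to conclude that $\mf{r}$ is exactly the radical. The only cosmetic difference is that the paper phrases the last step through the quotient map while you argue via a nontrivial intersection with $\mf{s}$; both are valid.
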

\begin{proof}
We have already established the vector space direct sum $\aliapk{0}=\mf{r}\oplus\mf{s}$. What remains to be shown is that $\mf{r}$ is the radical of $\aliapk{0}$.

It is clear that $\mf{r}$ is a solvable ideal of $\aliapk{0}$ due to the grading and the fact that $\mf{z}(\mf{g}_0)$ is central in $\mf{g}_0$. Hence $\mf{r}$ is contained in the radical of $\aliapk{0}$.

The quotient map $\aliapk{0}\to \aliapk{0}/\mf r$ restricts to an isomorphism $\mf s \to \aliapk{0}/\mf r$. The latter has trivial radical, hence the radical of $\aliapk{0}$ is contained in $\mf{r}$.
\end{proof}
We pause the study of the automorphic Lie algebras with polynomial growth at this point and proceed to the case $\aliak{0}$ of exponential growth.

A subgroup $\rg$ of $\SLNZ[2]$ is called {\it even} if it contains $-\Id$, otherwise $\rg$ is called {\it odd}.
Note that $\Gamma(2)$ is even and $\Gamma(N)$ is odd when $N\ge 3$. 
If $\rg$ is even then $\mfk{k}=\{0\}$ for odd $k$, since $f\in\mfk{k}$ implies $f(\tau)=f(-\Id(\tau))=(-1)^kf(\tau)$.

In the case where $\rg$ is even, it is convenient to make the assumption that $\rho(-\Id)=\id$ and we will do so below. Otherwise we have to replace $\mf{g}$ by $\mf{g}^{\rho(\pm\Id)}$ accordingly.

We have the following simple observation.
\begin{lemma}
\label{lem:mf odd group}
The weakly holomorphic modular forms of $\rg$ are given by 
$$
\mfk{k}=\mfk{0}F_1^{k}
$$
for some $F_1$ independent of $k$ if and only if $\mfk{1}$ contains an element $F_1$ that is never zero on $\uh$.
\end{lemma}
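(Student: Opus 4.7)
My plan is to prove the equivalence by direct construction, with the main work on the reverse implication.

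For the easy direction ($\Rightarrow$), assume $\mfk{k}=\mfk{0}F_1^{k}$ holds for all $k\in\mb{Z}$. Taking $k=1$ and using $1\in\mfk{0}$ gives $F_1\in\mfk{1}$. Taking $k=-1$ and again using $1\in\mfk{0}$ shows $F_1^{-1}\in\mfk{-1}$; in particular $F_1^{-1}$ is a holomorphic function on $\uh$, so $F_1$ has no zero on $\uh$.

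For the reverse direction ($\Leftarrow$), fix a non-vanishing $F_1\in\mfk{1}$. The inclusion $\mfk{0}F_1^{k}\subseteq\mfk{k}$ is immediate from the graded-algebra structure of $\mfz$, combined with the observation that non-vanishing of $F_1$ on $\uh$ makes $F_1^{-1}$, hence every power $F_1^k$, holomorphic on $\uh$. For the reverse inclusion, given $f\in\mfk{k}$, set $g:=f/F_1^k$. This is holomorphic on $\uh$, and the automorphy factors $(c\tau+d)^k$ arising from the transformation of $f$ and of $F_1^k$ cancel, so $g$ transforms as a weight-zero form under $\rg$.

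The only remaining task is to verify that $g$ has at most exponential growth at each cusp $s$, and this is the main (though mild) subtlety of the proof. Both $f$ and $F_1$, being weakly holomorphic, admit convergent Laurent $q_s$-expansions at $s$, starting at some finite integer orders $n_f$ and $n_{F_1}$ respectively, so neither has an essential singularity there. Hence $g=f/F_1^k$ admits a convergent Laurent $q_s$-expansion starting at the finite order $n_f-k\,n_{F_1}$, and thus grows at most exponentially at the cusp. This confirms $g\in\mfk{0}$ and $f=g F_1^k\in\mfk{0}F_1^k$, completing the proof.
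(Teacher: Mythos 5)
Your proposal is correct and follows essentially the same route as the paper: the forward direction via $F_1^{-1}\in\mfk{-1}$, and the reverse direction by forming $g=f/F_1^{k}$ and checking it lies in $\mfk{0}$. The only difference is that you spell out the cusp analysis (finite-order Laurent $q_s$-expansions of $f$ and $F_1$, hence of the quotient), where the paper simply remarks that $F_1$ can only have zeros at the cusps; this added detail is welcome but not a different argument.
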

\begin{proof}
If $\mfk{k}=\mfk{0}F_1^{k}$ then in particular $F_1^{-1}\in\mfk{-1}$, hence $F_1$ is never zero on $\uh$.
If, on the other hand, $F_1\in \mfk{1}$ is never zero on $\uh$, then it is clear that $\mfk{0}F_1^{k}\subset\mfk{k}$ for any integer $k$. To prove the other inclusion, we take $f\in\mfk{k}$ and notice that $f/F_1^{k}\in\mfk{0}$ because $F_1$ can only have zeros at cusps. 
\end{proof}

\begin{Theorem}
\label{thm:alias odd group}
If $\rg$ is a finite index subgroup of $\SLNZ[2]$, and $\mfpk{1}$ contains an element that is never zero on $\uh$, then there is a Lie algebra isomorphism 
\beq{Psi0}
\alia\cong \mf{g}\otimes_{\mathbb C} \mfk{0}.
\eeq
\end{Theorem}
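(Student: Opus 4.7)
The plan is to combine two ingredients that are already in hand. First, the corollary (\ref{corol}) of Theorem \ref{thm:aliahz} gives the Lie algebra identification
\[
\aliak{0} \cong \bigoplus_{k\in\mb{Z}} \mf{g}_{-k}\otimes_{\mathbb{C}} \mfk{k},
\]
with bracket $[X\otimes f,\,Y\otimes g]=[X,Y]\otimes fg$ whenever $X\in\mf{g}_{-k}$, $Y\in\mf{g}_{-\ell}$, $f\in\mfk{k}$, $g\in\mfk{\ell}$. Second, the hypothesis gives a nowhere-vanishing $F_1\in\mfpk{1}\subset\mfk{1}$, so Lemma \ref{lem:mf odd group} yields $\mfk{k}=\mfk{0}F_1^k$ for every $k\in\mb{Z}$, with multiplication by $F_1^k$ a $\mfk{0}$-module isomorphism $\mfk{0}\xrightarrow{\sim}\mfk{k}$.

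The proof will then just paste these two facts together by rescaling. I define
\[
\Psi:\mf{g}\otimes_{\mathbb{C}}\mfk{0}\longrightarrow\bigoplus_{k\in\mb{Z}}\mf{g}_{-k}\otimes_{\mathbb{C}}\mfk{k},\qquad \Psi(X\otimes f)=X\otimes fF_1^{k}\ \text{for }X\in\mf{g}_{-k},
\]
extended linearly via the weight decomposition $\mf{g}=\bigoplus_k\mf{g}_{-k}$. Bijectivity is immediate from Lemma \ref{lem:mf odd group}, component by component. For the Lie homomorphism property, take $X\in\mf{g}_{-k}$, $Y\in\mf{g}_{-\ell}$, and $f,g\in\mfk{0}$; then $[X,Y]\in\mf{g}_{-(k+\ell)}$ and both sides of
\[
\Psi([X,Y]\otimes fg)=[X,Y]\otimes fg\,F_1^{k+\ell}=[X\otimes fF_1^{k},\,Y\otimes gF_1^{\ell}]=[\Psi(X\otimes f),\Psi(Y\otimes g)]
\]
agree because the $F_1$-exponents track the $\mf{g}$-grading additively. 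Composing with the isomorphism from (\ref{corol}) yields (\ref{Psi0}).

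There is no real obstacle here: once Theorem \ref{thm:aliahz} and Lemma \ref{lem:mf odd group} are available, the whole argument is the observation that a nowhere-vanishing weight-one form converts the ``twisted'' grading $\bigoplus_k\mf{g}_{-k}\otimes\mfk{k}$ into the trivial tensor product $\mf{g}\otimes\mfk{0}$ while preserving the bracket. The only point worth noting is that this isomorphism is not a graded isomorphism in any useful sense, since the right-hand side of (\ref{Psi0}) carries no weight grading at all; $\Psi$ is, however, an isomorphism of graded $\mfk{0}$-modules if one keeps track of the $\mf{g}$-grading on both sides.
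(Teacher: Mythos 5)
Your proposal is correct and follows essentially the same route as the paper: both invoke the corollary (\ref{corol}) of Theorem \ref{thm:aliahz} together with Lemma \ref{lem:mf odd group} to rewrite $\mfk{k}=\mfk{0}F_1^{k}$, and then observe that the rescaling $A_{-k}\otimes F_1^{k}\mapsto A_{-k}$ identifies $\bigoplus_{k}\mf{g}_{-k}\otimes_{\mathbb C}\mathbb C F_1^{k}$ with $\mf{g}$. Your explicit verification that the $F_1$-exponents track the grading additively, and hence that the bracket is preserved, is just the detail the paper leaves implicit.
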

\begin{proof}Due to Lemma \ref{lem:mf odd group} we know that $\mfk{k}=\mfk{0}F_1^{k}$. When we plug this into (\ref{corol}) we have $$\alia\cong\left(\bigoplus_{k\in\mb{Z}} \mf{g}_{-k}\otimes_{\mathbb C} \mathbb C F_1^{k} \right)\otimes_{\mathbb C}\mfk{0}.$$ Now we notice that the subalgebra $\bigoplus_{k\in\mb{Z}}\mf{g}_{-k}\otimes_{\mathbb C}\mathbb C  F_1^{k}$ is isomorphic to $\mf{g}$ by $A_{-k}\otimes F_1^{k}\mapsto A_{-k}$ for $A_{-k}\in\mf{g}_{-k}$. This finishes the proof.
\end{proof}

We do not know which groups have everywhere nonvanishing modular form of weight $1$, but this is true for the following principal congruence subgroups.

\begin{Theorem}
The groups $\Gamma(3k)$, $\Gamma(4k)$ and $\Gamma(5k)$, with $k\in\mb{N}$, have a modular form of weight $1$ which is never zero on $\uh$, so if $\rg$ is one of these groups we do have the isomorphism $$\alia\cong \mf{g}\otimes_{\mathbb C} \mfk{0}.$$
\end{Theorem}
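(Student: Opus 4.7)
My plan has two steps. First, since $\Gamma(Nk)\subset\Gamma(N)$, every holomorphic modular form on $\Gamma(N)$ is automatically a modular form on $\Gamma(Nk)$ of the same weight, so it suffices to find a non-vanishing element of $\mfpk[{\Gamma(N)}]{1}$ for each $N\in\{3,4,5\}$; Theorem \ref{thm:alias odd group} then supplies the claimed Lie algebra isomorphism.

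For the existence I would use a genus-zero Picard-group argument. For $N\in\{3,4,5\}$ the group $\Gamma(N)$ is torsion-free and the compactified modular curve $X(N)=\Gamma(N)\backslash\uh^{\!\ast}$ is rational (genus $0$) with $c=4,6,12$ cusps respectively. Holomorphic weight-$k$ modular forms for $\Gamma(N)$ correspond to global sections of a line bundle $L_k$ on $X(N)$, with $L_2\cong\omega_{X(N)}(D_\infty)$ (for the reduced cusp divisor $D_\infty$) and $L_1$ a square root of $L_2$; such a square root exists because $\deg L_2 = 2g-2+c=c-2$ is even and $\mathrm{Pic}(\mathbb P^1)=\mathbb Z$. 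Thus $\deg L_1=(c-2)/2\in\{1,2,5\}$, all non-negative, and $L_1\cong\mathcal{O}_{\mathbb P^1}(d_1)$ with $d_1=\deg L_1$. Picking any cusp $P_0\in X(N)$, the effective divisor $d_1 P_0$ of degree $d_1$ corresponds to a global section $F_1\in H^0(X(N),L_1)=\mfpk[{\Gamma(N)}]{1}$ whose zero-divisor is exactly $d_1 P_0$, concentrated at the single cusp $P_0$. In particular $F_1$ has no zeros on $\uh$ itself, which is the hypothesis of Theorem \ref{thm:alias odd group}.

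The delicate step is the identification of $\mfpk[{\Gamma(N)}]{1}$ with the sections of a specific square-root line bundle $L_1$ of $\omega_{X(N)}(D_\infty)$ in the analytic sense; for torsion-free congruence subgroups of $\SLNZ[2]$ of level $\geq 3$ this is classical but needs to be stated carefully. A more hands-on alternative is to write down explicit non-vanishing weight-$1$ forms: for $\Gamma(4)$ the Jacobi theta square $\theta_3(\tau)^2=\bigl(\sum_{n\in\mathbb Z}e^{\pi i n^2\tau}\bigr)^2$ lies in $M_1(\Gamma_1(4))\subset M_1(\Gamma(4))$ and is non-vanishing on $\uh$ by its infinite product expansion, with analogous (though slightly more intricate) eta or theta constructions for $\Gamma(3)$ and $\Gamma(5)$. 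The line-bundle approach has the virtue of treating all three levels uniformly and of clarifying why the list stops at $N=5$: for $N\geq6$ the curve $X(N)$ has positive genus, $\mathrm{Pic}(X(N))$ is no longer free of rank one, and the equivalence $L_1\cong\mathcal{O}(d_1 P_0)$ for a cusp $P_0$ need no longer hold.
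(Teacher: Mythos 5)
Your proposal is correct in substance but follows a genuinely different route from the paper. The first reduction is identical: both you and the authors pass from $\Gamma(Nk)$ to $\Gamma(N)$ via the inclusion $\Gamma(Nk)\subset\Gamma(N)$ and then invoke Theorem \ref{thm:alias odd group}. For the existence of a nonvanishing weight-$1$ form, however, the paper simply quotes explicit classical formulas (from Klein via Schultz's notes): the eta-quotients $\eta(3\tau)^3/\eta(\tau)$ and $\eta(4\tau)^4/\eta(2\tau)^2$ for $\Gamma(3)$ and $\Gamma(4)$, and an eta--Klein-form product for $\Gamma(5)$, whose nonvanishing on $\uh$ is immediate from the product expansions. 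Your line-bundle argument --- identifying $\mfpk[{\Gamma(N)}]{1}$ with $H^0(X(N),L_1)$ where $L_1^{\otimes 2}\cong\omega_{X(N)}(D_\infty)$, computing $\deg L_1=(c-2)/2\in\{1,2,5\}$ from the cusp counts $c=4,6,12$, and using $\mathrm{Pic}(\mathbb{P}^1)=\mathbb{Z}$ to realise $L_1\cong\mathcal{O}(d_1P_0)$ for a cusp $P_0$ --- is uniform across the three levels and explains structurally why a form with all its zeros at a single cusp exists; the dimension counts it predicts ($c/2=2,3,6$) match the known generators. The cost is exactly the ``delicate step'' you flag: one must justify that weight-$1$ forms are sections of a genuine square root of $\omega(D_\infty)$, which uses that $\Gamma(N)$ is torsion-free for $N\ge3$ and that all its cusps are regular; this is classical but nontrivial, and the paper's explicit formulas bypass it entirely.

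Two small corrections. First, $\theta_3(\tau)^2\notin M_1(\Gamma_1(4))$: since $T\in\Gamma_1(4)$ and $\theta_3(\tau+1)=\theta_4(\tau)$, the form $\theta_3^2$ is not $T$-invariant. The statement you actually need, $\theta_3^2\in\mfpk[{\Gamma(4)}]{1}$, is true (it is one of the three weight-$1$ generators of $\mfpz[\Gamma(4)]$) and nonvanishing on $\uh$ by the triple product, so the fallback argument survives. Second, your closing remark that the construction must stop at $N=5$ because $X(N)$ acquires positive genus is misleading: the paper's Theorem \ref{thm:alias genus 1 groups} shows that genus-one groups still admit a nowhere-vanishing weight-$1$ form (coming from the trivial canonical bundle of the elliptic curve), so positive genus per se is not the obstruction; what changes is only that the form need no longer have all its zeros concentrated at a single cusp.
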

\begin{proof}
The principal congruence subgroups $\Gamma(N)$ with $N=3,4,5$ are related to regular polyhedra (see Section \ref{sec:principal} below).
An explicit expression for the nonvanishing modular forms of weight $1$ for $\Gamma(N)$ with $N=3,4,5$ goes back to Klein.  They are conveniently provided in Schultz's lecture notes \cite{schultz2015notes} (see Proposition 4.9.6 there and Section \ref{sec:principal} below).

Let $\eta(\tau)$ be {\it Dedekind's eta-function} defined by
\beq{Dedekind}
\eta(\tau)=q^{\frac{1}{24}}(q; q)_\infty, \quad q=\exp(2\pi i \tau)
\eeq
where
$$
(x;q)_\infty:=\prod_{k=0}^{\infty}(1-xq^k).
$$
Introduce also the {\it Klein forms}
$\mf{k}_{r_1,r_2}$ by
\beq{Klein}
\mf{k}_{r_1,r_2}(\tau)=q_z^{(r_1-1)/2}\frac{(q_z;q)_\infty(q/q_z;q)_\infty}{(q;q)_\infty^2}
\eeq
with $q_z=\exp(2\pi i z), \,\,z=r_1\tau+r_2$.

From \cite[Proposition 4.9.6]{schultz2015notes} we extract
\begin{proposition} 
The following explicit expressions give the modular forms of weight 1
\[\frac{\eta(3\tau)^3}{\eta(\tau)},\quad \frac{\eta(4\tau)^4}{\eta(2\tau)^2},\quad \frac{\eta(5\tau)^{15}\mf{k}_{\frac{1}{5},\frac{0}{5}}(5\tau)^5}{\eta(\tau)^3}\]
for the principal congruence subgroups $\Gamma(3)$, $\Gamma(4)$ and $\Gamma(5)$ respectively.
\end{proposition}

It is easy to see that the corresponding modular forms have no zeros in $\uh,$ so we can apply our Theorem \ref{thm:alias odd group}. Since $\Gamma(MN)\subset \Gamma(N)$ for any $M\in\mb{N}$, the claim now follows. 
\end{proof}

To deal with $\Gamma(2)$ we need the following
\begin{lemma}
\label{lem:mf even weight}
The weakly holomorphic modular forms of $\rg$ of even weight $k=2\ell$ are given by 
$$
\mfk{k}=\mfk{0}F_2^{\ell}
$$
for some $F_2$ independent of $k$ if and only if $\mfk{2}$ contains an element $F_2$ that is never zero on $\uh$.
\end{lemma}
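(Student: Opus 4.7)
The plan is to mirror the proof of Lemma~\ref{lem:mf odd group} almost verbatim, with the single modification that all weights are restricted to even integers, so powers of $F_2$ (which has weight $2$) produce every even weight exactly.

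For the forward direction, if $\mfk{2\ell}=\mfk{0}F_2^\ell$ for every integer $\ell$, then taking $\ell=1$ gives $F_2\in\mfk{2}$, and taking $\ell=-1$ gives $F_2^{-1}\in\mfk{-2}$, so $F_2$ can have no zeros on $\uh$ (only at the cusps).

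For the reverse direction, suppose $F_2\in\mfk{2}$ is never zero on $\uh$. Then $F_2^\ell$ is a weakly holomorphic form of weight $2\ell$ for every $\ell\in\mb{Z}$ (it is holomorphic on $\uh$ because $F_2$ has no zeros there, and whatever order vanishing or blowup it exhibits at the cusps is permitted in the weakly holomorphic setting). The inclusion $\mfk{0}F_2^\ell\subset\mfk{2\ell}$ is then immediate from the transformation law. Conversely, given $f\in\mfk{2\ell}$, form the ratio $f/F_2^\ell$; it transforms with weight $0$, is holomorphic on $\uh$ since $F_2$ does not vanish there, and can have at worst exponential growth at the cusps because both $f$ and $F_2^{\pm 1}$ do. Hence $f/F_2^\ell\in\mfk{0}$ and $f\in\mfk{0}F_2^\ell$, finishing the proof.

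There is no real obstacle: the argument is structurally identical to Lemma~\ref{lem:mf odd group}. The only point worth flagging is that restricting to even weights is essential here, since if $\rg$ is an even subgroup then $\mfk{k}=\{0\}$ for odd $k$ and the statement covers all of $\mfz$, whereas for an odd $\rg$ that admits no weight-one never-vanishing form (but admits a weight-two one, e.g.\ the square of an eta-quotient, or a form obtained by squaring a candidate weight-one form if present) only the even-weight part of $\mfz$ is described by this lemma.
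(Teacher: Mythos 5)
Your proof is correct and follows essentially the same route as the paper's: the forward direction extracts $F_2^{-1}\in\mfk{-2}$ to rule out zeros on $\uh$, and the reverse direction divides a given $f\in\mfk{2\ell}$ by $F_2^{\ell}$ and checks the quotient lies in $\mfk{0}$. The extra detail you supply on the growth condition at the cusps is a harmless elaboration of what the paper leaves implicit.
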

\begin{proof}
If $\mfk{k}=\mfk{0}F_2^{\ell}$ then in particular $F_2^{-1}\in\mfk{-2}$ hence $F_2$ is never zero on $\uh$.
If, on the other hand, $F_2\in \mfk{2}$ is never zero on $\uh$, then it is clear that $\mfk{0}F_2^{\ell}\subset\mfk{k}$ for any integer $k$. To prove the other inclusion, we take $f\in\mfk{k}$ and notice that $f/F_2^{\ell}\in\mfk{0}$ because $F_2$ can only have zeros at cusps. 
\end{proof}

\begin{Theorem}
\label{thm:alias even group}
If $\rg$ is an even finite index subgroup of $\SLNZ[2]$, and $\mfpk{2}$ contains an element that is never zero on $\uh$, and $\rho(-\Id)=\id$, then there is a Lie algebra isomorphism 
\beq{Psi}
\alia\cong \mf{g}\otimes_{\mathbb C} \mfk{0}.
\eeq
In particular, this applies to $\Gamma=\Gamma(2)$.
\end{Theorem}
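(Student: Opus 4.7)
The plan is to mirror the proof of Theorem \ref{thm:alias odd group}, with Lemma \ref{lem:mf even weight} in place of Lemma \ref{lem:mf odd group}, and with the parity hypotheses $-\Id\in\rg$ and $\rho(-\Id)=\id$ killing the odd-weight contributions on both sides of the decomposition (\ref{corol}).

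First I would invoke (\ref{corol}) to obtain $\alia\cong\bigoplus_{k\in\mb{Z}}\mf{g}_{-k}\otimes_{\mathbb C}\mfk{k}$ as graded Lie algebras. Since $-\Id\in\rg$ fixes every point of $\uh$ and contributes the cocycle factor $(c\tau+d)^{k}=(-1)^{k}$, any $f\in\mfk{k}$ must satisfy $f=(-1)^{k}\rho(-\Id)f=(-1)^{k}f$, hence $\mfk{k}=\{0\}$ for odd $k$. Independently, the identity $-\Id=\exp(\pi i\,\diag(1,-1))$ combined with $\bar\rho(-\Id)=\exp(\pi iH)=\id$ forces every eigenvalue of $H$ to be even, and therefore $\mf{g}_{k}=0$ for odd $k$. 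Both facts will be needed below.

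Applying Lemma \ref{lem:mf even weight} to the assumed nonvanishing $F_2\in\mfpk{2}\subset\mfk{2}$ gives $\mfk{2\ell}=\mfk{0}F_2^{\ell}$ for every $\ell\in\mb{Z}$. Substituting and using that only the even-$k$ summands survive,
\[
\alia\;\cong\;\Bigl(\bigoplus_{\ell\in\mb{Z}}\mf{g}_{-2\ell}\otimes_{\mathbb C}\mb{C}F_2^{\ell}\Bigr)\otimes_{\mathbb C}\mfk{0}.
\]
The inner parenthesis is a Lie subalgebra of $\alia$ with pointwise bracket $[A\otimes F_2^{\ell},B\otimes F_2^{m}]=[A,B]\otimes F_2^{\ell+m}$, and the $\mb{C}$-linear map $A_{-2\ell}\otimes F_2^{\ell}\mapsto A_{-2\ell}$ is manifestly a Lie algebra isomorphism onto $\bigoplus_{\ell\in\mb{Z}}\mf{g}_{-2\ell}=\mf{g}$, the last equality using the vanishing of the odd-weight components. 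Tensoring with the commutative factor $\mfk{0}$ yields the claimed isomorphism $\alia\cong\mf{g}\otimes_{\mathbb C}\mfk{0}$.

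For the \emph{In particular} clause I need to exhibit a holomorphic weight-$2$ modular form for $\Gamma(2)$ that is everywhere nonzero on $\uh$. A classical choice is any of the fourth powers $\theta_{i}(\tau)^{4}$ $(i=2,3,4)$ of the Jacobi theta constants, each of which is a holomorphic weight-$2$ form for $\Gamma(2)$ and nonvanishing on $\uh$ since the theta constants themselves are zero-free there; equivalently one may use an appropriate $\eta$-quotient built from $\eta(\tau)$ and $\eta(2\tau)$, both known to be nonvanishing on $\uh$. The only genuinely non-formal step is this construction of the weight-$2$ witness; once $F_2$ is in hand, the algebraic body of the argument is forced by Theorem \ref{thm:aliahz} and Lemma \ref{lem:mf even weight}.
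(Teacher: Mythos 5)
Your proposal is correct and follows essentially the same route as the paper: decompose via (\ref{corol}), kill the odd weights using evenness of $\rg$, apply Lemma \ref{lem:mf even weight} to collapse the even weights onto $\mfk{0}F_2^{k/2}$, identify the inner sum with $\mf{g}$ using $\rho(-\Id)=\id$, and witness the hypothesis for $\Gamma(2)$ with $\theta_i^4$. Your explicit justification that $\bar\rho(-\Id)=\exp(\pi i H)=\id$ forces the $H$-eigenvalues to be even is a detail the paper leaves implicit, but it changes nothing in the structure of the argument.
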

\begin{proof}
First of all, we know that $\mfk{k}=\{0\}$ when $k$ is odd because $\Gamma$ is even. Due to Lemma \ref{lem:mf even weight} we know that $\mfk{k}=\mfk{0}F_2^{k/2}$ when $k$ is even. Substituting into (\ref{corol}) gives $$\alia\cong\left(\bigoplus_{k\textrm{ even}} \mf{g}_{-k}\otimes_{\mathbb C}\mathbb C F_2^{k/2} \right)\otimes_{\mathbb C}\mfk{0}.$$ 
Now we notice that the subalgebra $\bigoplus_{k\textrm{ even}}\mf{g}_{-k}\otimes_{\mathbb C}\mathbb C F_2^{k/2}$ is isomorphic to $\bigoplus_{k\textrm{ even}}\mf{g}_{-k}$ by $A_{-k}\otimes F_2^{k/2}\mapsto A_{-k}$ for $A_{-k}\in\mf{g}_{-k}$. By the assumption that $\rho(-\Id)=\id$ we have $\bigoplus_{k\textrm{ even}}\mf{g}_{-k}=\mf{g}$, which establishes the first claim.

This can be applied to $\Gamma(2)$ since it has 3 non-vanishing modular forms $\theta_i^4,\, i=2,3,4$ of weight 2 with $\theta_i$ being the classical theta-series (see Section \ref{sec:principal} for the details).
\end{proof}

\section{Full Modular Group Case and Root Cohomology}
\label{sec:full}

Let us describe the corresponding algebras $\aliak[{\mg}]{0}$ for the full modular group $\Gamma(1)=\SLNZ[2]$ in more detail. In this case, we do not have an analogue of Theorems \ref{thm:alias odd group} and \ref{thm:alias even group}. We will again assume that $\rho$ is restricted from $\SLNC[2]$ and $\rho(-\Id)=\id$.

To find the structure constants of $\aliak[{\mg}]{0}$ we will use Lemma \ref{lem:meromorphic weight k} and the language of root cohomology introduced in \cite{knibbeler2020cohomology}. 

It is well known that any semisimple element in a simple Lie algebra is contained in a Cartan subalgebra. Thus for any embedding of $\SLNC[2]$ in the automorphism group of $\mf{g}$ we may choose a Cartan subalgebra $\mf{h}$ of $\mf{g}$ such that $H$ is in $\ad \mf{h}$. A Chevalley basis $\{H_i, A_\alpha, i=1\ldots,N, \alpha\in\roots\}$ 
\[
\begin{array}{rll}
{[}H_i, H_j]&\backspace=0&
\\{[}H_i, H_\alpha]&\backspace=\alpha(H_i) A_\alpha&
\\{[}A_\alpha, A_{-\alpha}]&\backspace=\sum n_iH_i,& \alpha=\sum n_i\alpha_i,
\\{[}A_\alpha,A_\beta]&\backspace=\epsilon(\alpha,\beta)A_{\alpha+\beta},& \alpha+\beta\in\roots,
\\{[}A_\alpha,A_\beta]&\backspace=0,& \alpha+\beta\notin\roots\cup\{0\},
\end{array} 
\]
(see for instance \cite[Section 25]{humphreys1972introduction}) of $\mf{g}$ relative to such a Cartan subalgebra consists of $H$ eigenvectors with integral eigenvalues. If $k(\alpha)$ is the $H$-eigenvalue of $X_\alpha$ then $k$ is an additive map on the root system $\roots$. 
In fact, the classification of $\slnc[2]$-triples in $\mf{g}$ is listed in \cite{collingwood1993nilpotent} using Dynkin diagrams with labels $0$, $1$ and $2$. The map $k$ is the additive extension of these labels to the whole root system.

We revisit Theorem \ref{thm:aliahz} and its corollary (\ref{corol}) in this special case and include the finer structure provided by the Chevalley basis.  By Theorem \ref{thm:aliahz}, the elements
\[h_i(\tau)=\modaut{} H_i\otimes1,\quad  a_\alpha(\tau)=\modaut{} A_\alpha\otimes\dk{k(\alpha)},\quad i=1\ldots,N,\quad \alpha\in\roots,\]
where $\modaut{}$ is given by (\ref{eq:modautlie}),
form a basis of $\aliapz[\mg]$ as free $\mb{C}[E_4,E_6]$-module with identical structure constants as $\{H_i, A_\alpha\}$. 

Recall the functions $F_k=\Delta^\ell E_4^{\n} E_6^{\nn}$ of Lemma \ref{lem:meromorphic weight k} which generate the module $\mfk[\Gamma(1)]{k}$ over $\mb{C}[j]$. The exponents $(\n,\nn)$ can be seen as the residue map $2\mathbb Z \rightarrow\{0,1,2\}\times \{0,1\}$ of the isomorphism of groups 
\[2\mb{Z}/12\mb{Z}\rightarrow\mb{Z}/3\mb{Z}\times \mb{Z}/2\mb{Z},\quad \bar{2}\mapsto(\overline{2},\overline{1}).\]
For convenience we list its $6$ values 
$$
\begin{tabular}{cccccccc} 
$k$&$-6$ &$-4$ &$-2$&$0$ &$2$&$4$ &$6$\\
$(\n,\nn)$&$(0,1)$&$(2,0)$&$(1,1)$&$(0,0)$&$(2,1)$&$(1,0)$&$(0,1)$.\\
\end{tabular}
$$
By Lemma \ref{lem:meromorphic weight k}, the elements
\[h_i(\tau),\quad \ba_\alpha(\tau)=F_{-k(\alpha)} a_\alpha(\tau)\]
for $i=1\ldots,N$, form a basis of $\alia[\mg]$ as free $\mb{C}[j]$-module (here we use the assumption that $\rho(-\Id)=\id$ so that $k(\alpha)$ is even for all $\alpha\in\roots$). 
 
Now we define 
\beq{eq:2cocycles}
\om=1/3\,\mathsf{d}(\n\circ -k), \quad  \omm=1/2\,\mathsf{d}(\nn\circ -k)
\eeq
where $\mathsf{d}$ is the coboundary operator defined on the functions on the root system by $\mathsf{d}\omega(\alpha,\beta)=\omega(\beta)-\omega(\alpha+\beta)+\omega(\alpha)$. 

The symmetric $2$-cocycles (\ref{eq:2cocycles}) are maps sending a pair of roots to $0$ or $1$, and can therefore conveniently be represented by a graph, whose set of vertices is the set of roots, and whose set of edges is, for instance, the collection of pairs of roots that are send to $1$ by the cocycle.

Tables \ref{tab:A2}, \ref{tab:B2}, and \ref{tab:G2} below show all such graphs for simple Lie types of rank $2$ and embeddings $\SLNC[2]\to\Aut{\mf{g}}$ with $-\Id\mapsto \id$. This describes all corresponding automorphic Lie algebras $\alia[\mg]$ thanks to Theorem \ref{thm:aliam}.

We should note that there are no obstructions to describing Lie algebras of high rank. Indeed, $k$ is an additive function on the root system and therefore defined by its values on the simple roots. The same holds for $\om$ and $\omm$.

\begin{minipage}{0.3\textwidth}
\begin{table}[H]
\caption{$A_2$}
\label{tab:A2}
\begin{center}
\begin{tabular}{cc} \hline\\
principal
\\\\\hline\\
\begin{tikzpicture}[scale=\dynkintablescale,baseline=(current bounding box.center), font=\dynkinfont,decoration={
    markings,
        mark=between positions 0.6 and 5 step 8mm with {\arrow{<}} }]
\path 
node at ( -0.5,0) [dynkinnode,label=90: $2 $,label=270: $\alpha_1 $] (one) {$ $}
node at ( 0.5,0) [dynkinnode,label=90: $2 $,label=270: $\alpha_2 $] (two) {$ $}
;
\draw[] (one) to node []{$ $} (two);
\end{tikzpicture}
\\\\\hline\\
\begin{tikzpicture}[scale=\scaleAA]
  \path node at ( 0,0) [root,draw,label=270: $ $] (zero) {$ $}	
   node at ( 0,0) [2ndroot,draw,label=270: $ $] (zero) {$ $}	
	node at ( 4,0) [root,draw,label=0: $\alpha_1 $] (one) {$ $}
  	node at ( 2,3.464) [root,draw,label=60: $ $] (two) {$ $}
  	node at ( -2,3.464) [root,draw,label=180: $\alpha_2 $] (three) {$ $}
	node at ( -4,0) [root,draw,label=180: $ $] (four) {$ $}
	node at ( -2,-3.464) [root,draw,label=240: $ $] (five) {$ $}
	node at ( 2,-3.464) [root,draw,label=300: $ $] (six) {$ $};

	\draw[scochainp] (two.240-\dangle) to node []{$ $} (five.60+\dangle);
	\draw[scochainp] (three.300-\dangle) to node []{$ $} (six.120+\dangle);
	\draw[scochainm] (six.120-\dangle) to node []{$ $} (three.300+\dangle);
	\draw[scochainp] (four.0-\dangle) to node []{$ $} (one.180+\dangle);
	\draw[scochainm] (one.180-\dangle) to node []{$ $} (four.0+\dangle);

	\draw[scochainp] (two.210-\dangle) to node []{$ $} (four.30+\dangle);
	\draw[scochainp] (four.330-\dangle) to node [sloped,above]{$ $} (six.150+\dangle);
	\draw[scochainm] (six.150-\dangle) to node [sloped,above]{$ $} (four.330+\dangle);
	\draw[scochainp] (six.90-\dangle) to node [sloped,above]{$ $} (two.270+\dangle);
	\draw[scochainm] (three.330-\dangle) to node [sloped,above]{$ $} (one.150+\dangle);
\end{tikzpicture}
\end{tabular}
\end{center}
\end{table}
\end{minipage}
\begin{minipage}{0.7\textwidth}
\begin{table}[H]
\caption{$B_2$}
\label{tab:B2}
\begin{center}
\begin{tabular}{ccc} \hline\\
subregular&principal
\\\\\hline\\
\begin{tabular}{c}\begin{tikzpicture}[scale=\dynkintablescale,baseline=(current bounding box.center), font=\dynkinfont,decoration={
    markings,
        mark=at position 0.5 with {
\draw (-3pt,-3pt) -- (0pt,0pt);
\draw (0pt,-0pt) -- (-3pt,3pt);
} }]
\path 
node at ( 0,0) [sdynkinnode,label=90: $0 $,label=270: $\alpha_1 $] (one) {$ $}
node at ( 1,0) [ldynkinnode,label=90: $2 $,label=270: $\alpha_2 $] (two) {$ $}
;
\draw[] (one.30) to node []{$ $} (two.150);
\draw[] (one.-30) to node []{$ $} (two.-150);
\fill[postaction={decorate}] (two) -- (one);
\end{tikzpicture}\end{tabular}
&
\begin{tabular}{c}\begin{tikzpicture}[scale=\dynkintablescale,baseline=(current bounding box.center), font=\dynkinfont,decoration={
    markings,
        mark=at position 0.5 with {
\draw (-3pt,-3pt) -- (0pt,0pt);
\draw (0pt,-0pt) -- (-3pt,3pt);
} }]
\path 
node at ( 0,0) [sdynkinnode,label=90: $2 $,label=270: $\alpha_1 $] (one) {$ $}
node at ( 1,0) [ldynkinnode,label=90: $2 $,label=270: $\alpha_2 $] (two) {$ $}
;
\draw[] (one.30) to node []{$ $} (two.150);
\draw[] (one.-30) to node []{$ $} (two.-150);
\fill[postaction={decorate}] (two) -- (one);
\end{tikzpicture}\end{tabular}
\\\\\hline\\
\begin{tikzpicture}[scale=\scaleBB]
\path 	
node at ( 0,0) [root,draw,label=270: $ $] (zero) {$ $}	
node at ( 0,0) [2ndroot,draw,label=270: $ $] (zero) {$ $}	
node at (1,0) [root,draw,label=0: {$\alpha_1 $}] (m10) {$  $}
node at (-1,1) [root,draw,label=180: {$\alpha_2 $}] (m01) {$  $}
node at (0,1) [root,draw,label=90: {$ $}] (m11) {$  $}
node at (1,1) [root,draw,label=0: {$ $}] (m21) {$  $}

node at (-1,0) [root,draw,label=0: {$ $}] (n10) {$  $}
node at (1,-1) [root,draw,label=180: {$ $}] (n01) {$  $}
node at (0,-1) [root,draw,label=90: {$ $}] (n11) {$  $}
node at (-1,-1) [root,draw,label=0: {$ $}] (n21) {$  $}
;

\draw[scochainp](m01.315-\dangle) to node{}(n01.135+\dangle);
\draw[scochainm](m01.315+\dangle) to node{}(n01.135-\dangle);
\draw[scochainp](m11.270-\dangle) to node{}(n11.90+\dangle);
\draw[scochainm](m11.270+\dangle) to node{}(n11.90-\dangle);
\draw[scochainp](m21.225-\dangle) to node{}(n21.45+\dangle);
\draw[scochainm](m21.225+\dangle) to node{}(n21.45-\dangle);

\draw[scochainp](m01.300-\dangle) to node{}(n11.120+\dangle);
\draw[scochainm](m01.300+\dangle) to node{}(n11.120-\dangle);

\draw[scochainp](m11.300-\dangle) to node{}(n01.120+\dangle);
\draw[scochainm](m11.300+\dangle) to node{}(n01.120-\dangle);

\draw[scochainp](m11.255-\dangle) to node{}(n21.75+\dangle);
\draw[scochainm](m11.255+\dangle) to node{}(n21.75-\dangle);

\draw[scochainp](m21.255-\dangle) to node{}(n11.75+\dangle);
\draw[scochainm](m21.255+\dangle) to node{}(n11.75-\dangle);

\end{tikzpicture}
&
\begin{tikzpicture}[scale=\scaleBB]
\path 	
node at ( 0,0) [root,draw,label=270: $ $] (zero) {$ $}	
node at ( 0,0) [2ndroot,draw,label=270: $ $] (zero) {$ $}	
node at (1,0) [root,draw,label=0: {$\alpha_1 $}] (m10) {$  $}
node at (-1,1) [root,draw,label=180: {$\alpha_2 $}] (m01) {$  $}
node at (0,1) [root,draw,label=90: {$ $}] (m11) {$  $}
node at (1,1) [root,draw,label=0: {$ $}] (m21) {$  $}

node at (-1,0) [root,draw,label=0: {$ $}] (n10) {$  $}
node at (1,-1) [root,draw,label=180: {$ $}] (n01) {$  $}
node at (0,-1) [root,draw,label=90: {$ $}] (n11) {$  $}
node at (-1,-1) [root,draw,label=0: {$ $}] (n21) {$  $}
;
\draw[scochainp](m10.180+\dangle) to node{}(n10.-\dangle);
\draw[scochainm](m10.180-\dangle) to node{}(n10.\dangle);
\draw[scochainp](m01.315-\dangle) to node{}(n01.135+\dangle);
\draw[scochainm](m01.315+\dangle) to node{}(n01.135-\dangle);
\draw[scochainp](m11.270) to node{}(n11.90);
\draw[scochainm](m21.225) to node{}(n21.45);

\draw[scochainp](m10.135) to node{}(m11.-45);
\draw[scochainp](n10.45) to node{}(m11.225);
\draw[scochainp](n10.-45) to node{}(n11.135);
\draw[scochainp](m11.-60) to node{}(n01.120);
\draw[scochainp](n10.-30-\dangle) to node{}(n01.150+\dangle);
\draw[scochainm](n10.-30+\dangle) to node{}(n01.150-\dangle);

\draw[scochainm](m01.-30) to node{}(m10.150);
\draw[scochainm](m21.210) to node{}(n10.30);
\draw[scochainm](m10.210) to node{}(n21.30);

\end{tikzpicture}
\end{tabular}
\end{center}
\end{table}
\end{minipage}

\begin{center}
\begin{table}[H]
\caption{$G_2$}
\label{tab:G2}
\begin{center}
\begin{tabular}{cccccc} \hline\\
subregular&principal
\\\\\hline\\
\begin{tikzpicture}[scale=\dynkintablescale,baseline=(current bounding box.center), font=\dynkinfont,decoration={
    markings,
        mark=at position 0.5 with {
\draw (-3pt,-3pt) -- (0pt,0pt);
\draw (0pt,-0pt) -- (-3pt,3pt);
} }]
\path 
node at ( 1,0) [ldynkinnode,label=90: $2 $,label=270: $\alpha_1 $] (one) {$ $}
node at ( 2,0) [sdynkinnode,label=90: $0 $,label=270: $\alpha_2 $] (two) {$ $}
;
\draw[] (one) to node []{$ $} (two);
\draw[] (one.45) to node []{$ $} (two.135);
\draw[] (one.-45) to node []{$ $} (two.-135);
\fill[postaction={decorate}] (one) -- (two);
\end{tikzpicture}
&
\begin{tikzpicture}[scale=\dynkintablescale,baseline=(current bounding box.center), font=\dynkinfont,decoration={
    markings,
        mark=at position 0.5 with {
\draw (-3pt,-3pt) -- (0pt,0pt);
\draw (0pt,-0pt) -- (-3pt,3pt);
} }]
\path 
node at ( 1,0) [ldynkinnode,label=90: $2 $,label=270: $\alpha_1 $] (one) {$ $}
node at ( 2,0) [sdynkinnode,label=90: $2 $,label=270: $\alpha_2 $] (two) {$ $}
;
\draw[] (one) to node []{$ $} (two);
\draw[] (one.45) to node []{$ $} (two.135);
\draw[] (one.-45) to node []{$ $} (two.-135);
\fill[postaction={decorate}] (one) -- (two);
\end{tikzpicture}
\\\\\hline
\begin{tikzpicture}[scale=\scaleGG]
\path 
node at ( 0,0) [root,draw,label=270: $ $] (zero) {$ $}	
node at ( 0,0) [2ndroot,draw,label=270: $ $] (zero) {$ $}
node at (-1.5,.866) [root,draw,label=90: $\alpha_1 $] (m10) {$ $}
node at (1,0) [root,draw,label=0: $\alpha_2 $] (m01) {$ $}
node at (-.5,.866) [root,draw,label=90: $ $] (m11) {$ $}
node at (.5,.866) [root,draw,label=90: $ $] (m12) {$ $}
node at (1.5,.866) [root,draw,label=90: $ $] (m13) {$ $}
node at (0,2*.866) [root,draw,label=90: $ $] (m23) {$ $}
node at (1.5,-.866) [root,draw,label=270: $ $] (n10) {$ $}
node at (-1,0) [root,draw,label=180: $ $] (n01) {$ $}
node at (.5,-.866) [root,draw,label=270: $ $] (n11) {$ $}
node at (-.5,-.866) [root,draw,label=270: $ $] (n12) {$ $}
node at (-1.5,-.866) [root,draw,label=270: $ $] (n13) {$ $}
node at (0,-2*.866) [root,draw,label=270: $ $] (n23) {$ $}		
;
\draw[scochainp] (m11.270-\dangle) to node []{$ $} (n12.90+\dangle);
\draw[scochainp] (m12.270-\dangle) to (n11.90+\dangle);
\draw[scochainp] (n11.180+\dangle) to node []{$ $} (n12.-\dangle);

\draw[scochainp] (m23) to [bend right=\bend](n13);
\draw[scochainp] (m23) to [bend left=\bend](n10);
\draw[scochainp] (n10.180+\bend+\dangle) to [bend left=\bend](n13.-\bend-\dangle);

\draw[scochainp] (m10.-45-\dangle) to (n11.135+\dangle);
\draw[scochainp] (m23) to (n11);
\draw[scochainp] (m23) to (n12);
\draw[scochainp] (m13.225+\dangle) to (n12.45-\dangle);
\draw[scochainp] (n10.135+\dangle) to (m11.-45-\dangle);
\draw[scochainp] (n13.45-\dangle) to (m12.225+\dangle);

\draw[scochainp] (m10.-30-\dangle) to (n10.150+\dangle);
\draw[scochainp] (m11.-60-\dangle) to (n11.120+\dangle);
\draw[scochainp] (m12.240-\dangle) to (n12.60+\dangle);
\draw[scochainp] (m13.210+\dangle) to (n13.30-\dangle);
\draw[scochainp] (m23) to (n23);

\draw[scochainm] (m11.270+\dangle) to node []{$ $} (n12.90-\dangle);
\draw[scochainm] (m12.270+\dangle) to (n11.90-\dangle);

\draw[scochainm] (m12) to node []{$ $} (m11);
\draw[scochainm] (n11.180-\dangle) to node []{$ $} (n12.+\dangle);

\draw[scochainm] (m10) to [bend left=\bend](m13);
\draw[scochainm] (n10.180+\bend-\dangle) to [bend left=\bend](n13.-\bend+\dangle);

\draw[scochainm]  (m13.225-\dangle) to (n12.45+\dangle);
\draw[scochainm] (m12.225-\dangle) to (n13.45+\dangle);
\draw[scochainm] (m11.-45+\dangle) to (n10.135-\dangle);
\draw[scochainm] (m10.-45+\dangle) to (n11.135-\dangle);

\draw[scochainm] (m10.-30+\dangle) to (n10.150-\dangle);
\draw[scochainm] (m11.-60+\dangle) to (n11.120-\dangle);
\draw[scochainm] (m12.240+\dangle) to (n12.60-\dangle);
\draw[scochainm] (m13.210-\dangle) to (n13.30+\dangle);

\end{tikzpicture}
&
\begin{tikzpicture}[scale=\scaleGG]
\path 
node at ( 0,0) [root,draw,label=270: $ $] (zero) {$ $}	
node at ( 0,0) [2ndroot,draw,label=270: $ $] (zero) {$ $}
node at (-1.5,.866) [root,draw,label=90: $\alpha_1 $] (six) {$ $}
node at (1,0) [root,draw,label=0: $\alpha_2 $] (one) {$ $}
node at (-.5,.866) [root,draw,label=90: $ $] (five) {$ $}
node at (.5,.866) [root,draw,label=90: $ $] (three) {$ $}
node at (1.5,.866) [root,draw,label=90: $ $] (two) {$ $}
node at (0,2*.866) [root,draw,label=90: $ $] (four) {$ $}
node at (1.5,-.866) [root,draw,label=270: $ $] (twelve) {$ $}
node at (-1,0) [root,draw,label=180: $ $] (seven) {$ $}
node at (.5,-.866) [root,draw,label=270: $ $] (eleven) {$ $}
node at (-.5,-.866) [root,draw,label=270: $ $] (nine) {$ $}
node at (-1.5,-.866) [root,draw,label=270: $ $] (eight) {$ $}
node at (0,-2*.866) [root,draw,label=270: $ $] (ten) {$ $}		
;
\draw[scochainp] (one) to node []{$ $} (five);
\draw[scochainp] (eleven) to (seven);
\draw[scochainp] (seven) to node []{$ $} (five);

\draw[scochainp] (twelve.120-\bend+\dangle) to [bend right=\bend](four.-60+\bend-\dangle);
\draw[scochainp] (twelve) to [bend left=\bend](eight);
\draw[scochainp] (eight) to [bend left=\bend](four);

\draw[scochainp] (two) to (seven);
\draw[scochainp] (twelve.150+\dangle) to (seven.-30-\dangle);
\draw[scochainp] (twelve) to (five);
\draw[scochainp] (ten) to (five);
\draw[scochainp] (eight) to (one);
\draw[scochainp] (four) to (eleven);

\draw[scochainp] (two) to (eight);
\draw[scochainp] (one.180+\dangle) to (seven.0-\dangle);
\draw[scochainp] (eleven) to (five);
\draw[scochainp] (ten.90+\dangle) to (four.-90-\dangle);
\draw[scochainp] (twelve.150+\dangle) to (six.-30-\dangle);

\draw[scochainm] (three) to node []{$ $} (seven);
\draw[scochainm] (one) to (nine);

\draw[scochainm] (one) to node []{$ $} (three);
\draw[scochainm] (nine) to node []{$ $} (seven);

\draw[scochainm] (four.-60+\bend+\dangle) to [bend left=\bend](twelve.120-\bend-\dangle);
\draw[scochainm] (ten) to [bend left=\bend](six);

\draw[scochainm]  (twelve.150-\dangle) to (seven.-30+\dangle);
\draw[scochainm] (one) to (six);
\draw[scochainm] (three) to (ten);
\draw[scochainm] (four) to (nine);

\draw[scochainm] (four.-90+\dangle) to (ten.90-\dangle);
\draw[scochainm] (three) to (nine);
\draw[scochainm] (one.180-\dangle) to (seven.0+\dangle);
\draw[scochainm] (twelve.150-\dangle) to (six.-30+\dangle);
\end{tikzpicture}
\end{tabular}
\end{center}
\end{table}
\end{center}

Summarising all this we have the following explicit description of $\aliak[{\mg}]{0}$ by generators and relations.
\begin{Theorem}
\label{thm:aliam}
Let $\rho$ be a restricted representation with $\rho(-\Id)=\id$. Then the Lie algebra $\aliak[{\mg}]{0}$ is generated as a free $\mb{C}[j]$-module by elements $h_i(\tau)$ and $\ba_\alpha(\tau)$ where $i=1\ldots,N$. The $\mb{C}[j]$-linear Lie structure is given by the brackets
\[
\begin{array}{rll}
{[}h_i, h_j]&\backspace=0&
\\{[}h_i, \ba_\alpha]&\backspace=\alpha(H_i) \ba_\alpha&
\\{[}\ba_\alpha, \ba_{-\alpha}]&\backspace=j^{\omega^2_4(\alpha,-\alpha)}(j-1728)^{\omega^2_6(\alpha,-\alpha)}\sum n_ih_i,& \alpha=\sum n_i\alpha_i,
\\{[}\ba_\alpha,\ba_\beta]&\backspace=\epsilon(\alpha,\beta)j^{\omega^2_4(\alpha,\beta)}(j-1728)^{\omega^2_6(\alpha,\beta)} \ba_{\alpha+\beta},& \alpha+\beta\in\roots,
\\{[}\ba_\alpha,\ba_\beta]&\backspace=0,& \alpha+\beta\notin\roots\cup\{0\},
\end{array} 
\]
where $\om$ and $\omm$ are the symmetric $2$-cocycles (\ref{eq:2cocycles}) on the root system $\roots$ taking only values $0$ and $1$. 
\end{Theorem}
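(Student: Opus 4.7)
The plan is to transport the entire calculation to $\mf{g}\otimes_{\mathbb C}\mfz[\mg]$ via the isomorphism $\modaut[]{}$ of Theorem \ref{thm:aliahz} and then convert the weight-$k(\alpha)$ generators into weight-zero ones by multiplying by $F_{-k(\alpha)}$ as in Lemma \ref{lem:meromorphic weight k}. Since $\modaut[]{}$ is an isomorphism of graded Lie algebras sending $H_i\otimes 1\mapsto h_i$ and $A_\alpha\otimes\dk{k(\alpha)}\mapsto a_\alpha$, the Chevalley brackets of $\mf g$ transport verbatim to $[h_i,h_j]=0$, $[h_i,a_\alpha]=\alpha(H_i)a_\alpha$, $[a_\alpha,a_{-\alpha}]=\sum n_ih_i$, $[a_\alpha,a_\beta]=\epsilon(\alpha,\beta)a_{\alpha+\beta}$ when $\alpha+\beta\in\roots$, and $0$ otherwise. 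Combined with Lemma \ref{lem:meromorphic weight k}, this immediately shows that $\{h_i\}\cup\{\ba_\alpha\}$ is a $\mb{C}[j]$-basis of $\aliak[\mg]{0}$.

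Next I would use the $\mfz[\mg]$-bilinearity of the pointwise bracket to write $[\ba_\alpha,\ba_\beta]=F_{-k(\alpha)}F_{-k(\beta)}[a_\alpha,a_\beta]$. Substituting the brackets from the first step and dividing through by $F_{-k(\alpha+\beta)}$ (with the convention $F_0=1$ in the Cartan case $\beta=-\alpha$) reduces the problem to expressing the weight-zero prefactor
\[
R(\alpha,\beta)=\frac{F_{-k(\alpha)}F_{-k(\beta)}}{F_{-k(\alpha+\beta)}}
\]
as $j^{\om(\alpha,\beta)}(j-1728)^{\omm(\alpha,\beta)}$. Writing $F_k=\Delta^{\ell(k)}E_4^{\n(k)}E_6^{\nn(k)}$, the prefactor equals $\Delta^{\delta\ell}E_4^{\delta\n}E_6^{\delta\nn}$, where $\delta$ is the coboundary-style difference $\delta\n=\n(-k(\alpha))+\n(-k(\beta))-\n(-k(\alpha+\beta))$ and similarly for $\nn$ and $\ell$. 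Using the identities $j=E_4^3/\Delta$ and $j-1728=E_6^2/\Delta$ together with the weight-zero condition $12\delta\ell+4\delta\n+6\delta\nn=0$ forces $R(\alpha,\beta)=j^{\delta\n/3}(j-1728)^{\delta\nn/2}$, and a direct comparison with the definition (\ref{eq:2cocycles}) identifies the exponents with $\om(\alpha,\beta)$ and $\omm(\alpha,\beta)$.

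The main obstacle, and the only nontrivial input, is to verify that $\delta\n/3$ and $\delta\nn/2$ are non-negative integers bounded by $1$. For this I would exploit the description of $(\n,\nn)$ as the residue map of the group isomorphism $2\mb{Z}/12\mb{Z}\cong\mb{Z}/3\mb{Z}\times\mb{Z}/2\mb{Z}$: since $k$ is additive on the root system, the composite $\n\circ -k$ is additive modulo $3$, so $\delta\n\equiv 0\pmod 3$; combined with the range $\n(\cdot)\in\{0,1,2\}$, which forces $\delta\n\in[-2,4]$, this leaves $\delta\n\in\{0,3\}$ and hence $\om\in\{0,1\}$. The identical argument with modulus $2$ and range $\{0,1\}$ handles $\omm$, completing the list of structure constants asserted in the theorem.
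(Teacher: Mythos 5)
Your proposal is correct and follows essentially the same route as the paper: transport the Chevalley brackets through $\modaut[]{}$ (Theorem \ref{thm:aliahz}), pass to weight zero with the Duke--Jenkins generators $F_{-k(\alpha)}$ (Lemma \ref{lem:meromorphic weight k}), and identify the resulting prefactor with $j^{\om}(j-1728)^{\omm}$ via the cocycles (\ref{eq:2cocycles}). In fact you supply two details the paper leaves implicit — the explicit computation of $R(\alpha,\beta)=F_{-k(\alpha)}F_{-k(\beta)}/F_{-k(\alpha+\beta)}$ using $j=E_4^3/\Delta$, $j-1728=E_6^2/\Delta$ and the weight-zero balance, and the verification via additivity mod $3$ and mod $2$ that the cocycles only take the values $0$ and $1$ — both of which are sound.
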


\section{Principal Congruence Subgroups and Polyhedral Loop Algebras}
\label{sec:principal}

Let now $\Gamma=\Gamma(N)$ be the principal congruence subgroup of modular group
$$\Gamma(N)=\left\{\left( \begin{array}{cc}
a&b\\
c&d
\end{array}\right) \in \SLNZ[2]: \left( \begin{array}{cc}
a&b\\
c&d
\end{array}\right)\equiv \left( \begin{array}{cc}
1&0\\
0&1
\end{array}\right) \mod{N}\right\}.
$$
Let $X(N)$ be the compactification of the corresponding quotient $Y(N):=\Gamma(N)\backslash\uh$ by adding the parabolic points with appropriate local coordinates (see Ch. 2 in \cite{gunning1962lectures}). The genus of $\Gamma(N)$ is by definition the genus of $X(N)$.
 
We start with the following remarkable relation between the genus zero principal congruence subgroups and regular polyhedra, which probably should be ascribed to Klein (see \cite{klein1966vorlesungenI,klein1966vorlesungenII,gunning1962lectures}). 

\begin{Theorem} 
The modular curve $X(N)$ has genus zero if and only if $1\le N\le 5$. An isomorphism from $X(N)$ to the Riemann sphere can be chosen such that the cusps are mapped to the vertices of a regular triangle, tetrahedron, octahedron and icosahedron for $N=2,3,4,5$ respectively. 

The quotient group $\Gamma(1)/\Gamma(N)$ is isomorphic to dihedral group $\mathcal D_3\cong S_3$ (for $N=2$) and to the groups $\mathcal T \cong A_4$, $\mathcal O \cong S_4$, $\mathcal I \cong A_5$ of the rotational symmetries of the corresponding regular polyhedron (for $N=3,4,5$).
\end{Theorem}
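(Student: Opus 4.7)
\medskip

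\noindent\textbf{Proof plan.} My approach splits into the genus computation and the identification of the quotient action on $X(N)\cong\mathbb{P}^1$ for small $N$.

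First, I would invoke the standard genus formula for a finite-index subgroup $\Gamma\subset\Gamma(1)$, namely
\[
g(X(\Gamma))=1+\frac{d}{12}-\frac{\nu_2}{4}-\frac{\nu_3}{3}-\frac{\nu_\infty}{2},
\]
where $d=[\mathrm{PSL}(2,\mathbb{Z}):\bar\Gamma]$, $\nu_2,\nu_3$ count elliptic points of orders $2,3$ and $\nu_\infty$ counts cusps. For $\Gamma=\Gamma(N)$, $N\ge 2$, the image $\bar\Gamma(N)$ in $\mathrm{PSL}(2,\mathbb{Z})$ acts freely on $\mathbb H^2$ (any element of finite order $>1$ in $\mathrm{SL}(2,\mathbb{Z})$ is conjugate to an element with a nontrivial reduction mod $N$), so $\nu_2=\nu_3=0$. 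The well-known counts $d=\tfrac12 N^3\prod_{p\mid N}(1-p^{-2})$ (and $d=6$ for $N=2$) and $\nu_\infty=d/N$ (and $\nu_\infty=3$ for $N=2$) yield $g=0$ precisely for $N\in\{1,2,3,4,5\}$; direct substitution gives $g=1$ at $N=6$ and $g\ge 1$ for $N\ge 6$ (the $d/12$ term dominates).

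For the second statement I use the identification $\Gamma(1)/\Gamma(N)\cong\SL(2,\mathbb{Z}/N\mathbb{Z})$, which acts on $X(N)$ by biholomorphic deck transformations for the cover $X(N)\to X(1)$. The kernel of this action contains $\{\pm\Id\}$ for $N\ge 3$ (and is trivial for $N=2$), so the effective quotient is $\mathrm{PSL}(2,\mathbb{Z}/N\mathbb{Z})$. Computing orders gives $|\mathrm{PSL}(2,\mathbb{Z}/N\mathbb{Z})|=6,12,24,60$ for $N=2,3,4,5$. When $g(X(N))=0$, choose an isomorphism $X(N)\xrightarrow{\sim}\mathbb{P}^1$; the deck group embeds into $\mathrm{Aut}(\mathbb{P}^1)=\mathrm{PSL}(2,\mathbb{C})$. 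By the classical Klein classification, a finite subgroup of $\mathrm{PSL}(2,\mathbb{C})$ is cyclic, dihedral, or $\mathcal T\cong A_4$, $\mathcal O\cong S_4$, or $\mathcal I\cong A_5$. Matching orders and verifying non-cyclicity then forces the isomorphisms $\mathrm{PSL}(2,\mathbb{Z}/N\mathbb{Z})\cong \mathcal D_3, \mathcal T,\mathcal O,\mathcal I$ for $N=2,3,4,5$ respectively; the abstract identifications with $S_3,A_4,S_4,A_5$ can be checked either by a standard character-theoretic argument or by realising $\mathrm{PSL}(2,\mathbb{F}_p)$ concretely on the projective line over $\mathbb{F}_p$.

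Finally, I need to show that under a suitable choice of isomorphism $X(N)\cong\mathbb{P}^1$ the cusps land on the vertices of the regular polyhedron stabilised by the rotation group. The key observation is that $\Gamma(1)/\Gamma(N)$ acts transitively on the cusps of $X(N)$ (since $\Gamma(1)$ acts transitively on $\mathbb{Q}\cup\{i\infty\}$), and the cusp counts $\nu_\infty=3,4,6,12$ for $N=2,3,4,5$ coincide with the number of vertices of the triangle, tetrahedron, octahedron, and icosahedron. Comparing with the transitive orbits of $\mathcal D_3,\mathcal T,\mathcal O,\mathcal I$ on $\mathbb{P}^1$ (vertices, edge-midpoints, face-centres), the matching of orbit lengths and stabiliser orders (cyclic of orders $N$, equal to the width of the cusp) singles out the vertex orbit in each case. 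A conformal automorphism of $\mathbb{P}^1$ then realises this orbit as the set of vertices of the regular polyhedron.

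\textbf{Main obstacle.} The routine part is the genus computation. The conceptual step requiring care is the identification of the faithful action of $\mathrm{PSL}(2,\mathbb{Z}/N\mathbb{Z})$ with the rotation group of the regular polyhedron and the verification that the cusp orbit is precisely the vertex orbit (not the edge- or face-orbit); here one uses that the stabiliser of a cusp has order $N$, which matches the rotational stabiliser of a vertex in each of the cases $N=3,4,5$, while for $N=2$ one checks directly that $\mathcal D_3\cong S_3$ acts on the three cusps as the full symmetry group of a triangle.
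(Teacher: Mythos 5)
The paper offers no proof of this theorem: it is stated as a classical fact attributed to Klein, with references to \cite{klein1966vorlesungenI,klein1966vorlesungenII,gunning1962lectures}. Your proposal is essentially the standard textbook proof, and it is correct in outline and in all its numerical details: the genus formula with $\nu_2=\nu_3=0$ for $N\ge 2$, the index and cusp counts giving $g=0$ exactly for $N\le 5$ and $g-1=\tfrac{d(N-6)}{12N}\ge 0$ for $N\ge 6$, and the orbit--stabiliser matching (cusp orbit of size $d/N$ with cyclic stabiliser of order $N$) that singles out the vertex orbit of the polyhedron. You are also right, and more careful than the statement itself, to pass to $\mathrm{PSL}(2,\mathbb{Z}/N\mathbb{Z})$: for $N=3$ the group $\Gamma(1)/\Gamma(3)\cong\SL(2,\mathbb{Z}/3\mathbb{Z})$ has order $24$ and is the binary tetrahedral group, not $A_4$, so the theorem is really about the projective quotients (consistent with the notation $P\Gamma(1)/P\Gamma(6)$ used later in Section 8).

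Two small points deserve tightening. First, ``matching orders and verifying non-cyclicity'' does not by itself force the identification with $\mathcal T,\mathcal O,\mathcal I$, since Klein's list also contains dihedral groups of orders $12$, $24$, $60$; but the orbit data you already have rules these out (a dihedral subgroup of $\PSL(2,\mathbb C)$ has only orbits of size $2$ or of size equal to half the group order among its exceptional orbits, which is incompatible with a cusp orbit of size $4$, $6$, or $12$ with stabiliser of order $N$), or one can simply invoke the exceptional isomorphisms $\PSL(2,\mathbb F_3)\cong A_4$, $\PSL(2,\mathbb{Z}/4\mathbb{Z})\cong S_4$, $\PSL(2,\mathbb F_5)\cong A_5$ as you suggest. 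Second, for $N=3$ the orbit invariants (size $4$, stabiliser $C_3$) do not distinguish the vertex orbit of the tetrahedron from the face-centre orbit; this is harmless because the latter is the vertex set of the dual regular tetrahedron, but it is worth a sentence. With these remarks your argument is complete.
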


\newcommand{\np}{M}
We consider now the automorphic Lie algebra $\aliak[{\Gamma(N)}]{0}$ in more detail in the special cases of the principal congruence subgroups $\Gamma(N), \, 2\le N\leq 5$, when the corresponding quotients $X(\rg)$ have genus zero. Remarkably, these cases are closely related to the platonic solids and the corresponding $\np$-point loop algebras \[\mf{g}\otimes_\mb{C} \mb{C}[t,(t-a_1)^{-1},\ldots,(t-a_{\np-1})^{-1}]\] of $\mf{g}$-valued meromorphic functions on the Riemann sphere with poles allowed only at $\np$ points $S=\{\infty=a_0,a_1,\ldots,a_{\np-1}\}.$

Their central extensions were studied by Bremner \cite{bremner1994generalized,bremner1995four}, who was inspired by the important work of Krichever and Novikov \cite{krichever1987algebras}
and Schlichenmaier \cite{schlichenmaier1990krichever} 
on the Lie algebra of vector fields on Riemann surfaces.
\newcommand{\Ta}{\mf{Tg}}
\newcommand{\Oa}{\mf{Og}}
\newcommand{\Ia}{\mf{Ig}}
\newcommand{\Da}{\mf{Dg}}

We will show that the three particular cases of such algebras with $S$ being the sets of vertices of the regular tetrahedron, octahedron and icosahedron respectively, naturally appear as automorphic Lie algebras related to principal congruence subgroups $\Gamma(3), \Gamma(4), \Gamma(5).$ The corresponding sets of vertices can be chosen as
$$
S_T=\{\infty, 1, e^{\pm\frac{2\pi i}{3}} \}, \quad S_O=\{0, \infty, \pm 1, \pm i\},
$$
$$
S_I=\{0, \infty, \varepsilon^j(\varepsilon+\varepsilon^4), \varepsilon^j(\varepsilon^2+\varepsilon^3)\}, \quad j=0,\dots, 4, 
$$
where $\varepsilon=e^{\frac{2\pi i}{5}}$ (see \cite{klein1993vorlesungen}).

We call the corresponding Lie algebras 
\beq{tet}
\Ta=\mf{g}\otimes_\mb{C} \mb{C}[t, (t-1)^{-1},(t-\omega)^{-1},(t-\bar\omega)^{-1}], \,\, \omega=e^{\frac{2\pi i}{3}},
\eeq
\beq{oct}
\Oa=\mf{g}\otimes_{\mb{C}}\mb{C}[t,t^{-1},(t-1)^{-1},(t+1)^{-1},(t-i)^{-1},(t+i)^{-1}],
\eeq
\beq{icos}
\Ia=\mf{g}\otimes_{\mb{C}}\mb{C}[t,t^{-1},(t-\varepsilon^j(\varepsilon+\varepsilon^4))^{-1},(t-\varepsilon^j(\varepsilon^2+\varepsilon^3))^{-1}: j=0,\dots,4]
\eeq
{\it tetrahedral, octahedral and icosahedral loop algebras} respectively.

The principal congruence subgroup $\Gamma(2)$ is related to the {\it dihedron case} (in Klein's terminology \cite{klein1993vorlesungen}
) and to the 3-point loop algebra 
\beq{dih}
\Da=\mf{g}\otimes_\mb{C} \mb{C}[t,t^{-1},(t-1)^{-1}].
\eeq
Its particular case
\[\mf{sl}(2,\mb{C})\otimes_\mb{C} \mb{C}[t,t^{-1},(t-1)^{-1}]\]
was recently studied by Hartwig and Terwilliger \cite{hartwig2007tetrahedron}, who called it the tetrahedron algebra for different reasons. Note that all 3-point algebras are isomorphic since any triples of points on Riemann sphere are $\SLNC[2]$-equivalent.

From our results in Section \ref{sec:zero} we have the following 
\begin{Theorem} 
\label{thm:alias polyhedral}
For the principal congruence subgroups $\Gamma(N), \, N=3,4,5$ and their restricted representations, the zero weight automorphic Lie algebras $\alia$ are isomorphic to tetrahedral, octahedral and icosahedral loop algebras (\ref{tet}),(\ref{oct}),(\ref{icos}) respectively.

For $\Gamma(2)$ with even representation $\rho(-\Id)=\id$, the automorphic Lie algebra $\alia$ is isomorphic to dihedral loop algebra (\ref{dih}).
\end{Theorem}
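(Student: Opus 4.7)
The plan is to reduce the statement, via the isomorphisms of Theorems \ref{thm:alias odd group} and \ref{thm:alias even group}, to a concrete identification of the scalar ring $\mfk[\Gamma(N)]{0}$ with the corresponding polyhedral coordinate ring on the Riemann sphere, and then to invoke Klein's description of the cusps of $X(N)$.

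First I would handle the three odd cases $N=3,4,5$ uniformly. The preceding Proposition exhibits an explicit nonvanishing weight-$1$ modular form for each of $\Gamma(N)$, $N=3,4,5$, so the hypothesis of Theorem \ref{thm:alias odd group} is satisfied and we obtain a Lie algebra isomorphism
\[
\alia[{\Gamma(N)}] \;\cong\; \mf{g} \otimes_{\mathbb C} \mfk[{\Gamma(N)}]{0}.
\]
It therefore suffices to identify $\mfk[{\Gamma(N)}]{0}$ with the ring of meromorphic functions on the Riemann sphere whose poles are concentrated at the relevant polyhedral vertex set $S_T$, $S_O$, $S_I$. For $\Gamma(2)$ I would argue identically, using Theorem \ref{thm:alias even group} in place of Theorem \ref{thm:alias odd group} together with the fact (recorded just after that theorem) that $\theta_i^4$, $i=2,3,4$, are nonvanishing weight-$2$ forms for $\Gamma(2)$, and the assumption $\rho(-\Id)=\id$.

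The key geometric step is the identification of the ring $\mfk[{\Gamma(N)}]{0}$. By definition, an element of $\mfk[{\Gamma(N)}]{0}$ is a $\Gamma(N)$-invariant holomorphic function on $\uh$ with at most exponential growth at each cusp; equivalently, it descends to a meromorphic function on the compactification $X(N)$ with poles allowed only at the cusps. Now I would invoke Theorem \ref{thm:main introduction}'s preceding statement (Klein's theorem), which provides, for $N=2,3,4,5$, an isomorphism $X(N)\xrightarrow{\sim}\mb{CP}^1$ sending the cusps to the vertex sets of the triangle, tetrahedron, octahedron, icosahedron. Transporting our coordinate via this isomorphism and choosing the $\SLNC[2]$-normalisation so that the cusps land on the sets $S_T$, $S_O$, $S_I$ (respectively three points on a circle for the dihedron), the ring of meromorphic functions on $\mb{CP}^1$ regular away from these finite sets of points is, by elementary complex analysis, generated by a coordinate $t$ together with $(t-a)^{-1}$ for each finite cusp $a$ and no further relations. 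This yields
\[
\mfk[{\Gamma(N)}]{0} \;\cong\; \mb{C}[t,(t-a_1)^{-1},\ldots,(t-a_{\np-1})^{-1}],
\]
and hence, tensoring with $\mf{g}$, the identifications with $\Da$, $\Ta$, $\Oa$, $\Ia$ stated in the theorem.

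The main obstacle is the matching of the normalisations: one must check that Klein's isomorphism $X(N)\to \mb{CP}^1$ can be chosen so that the cusp images are exactly the polyhedral vertex sets written down in $S_T$, $S_O$, $S_I$ (and three collinear points on the equator in the dihedral case). This is a classical computation going back to Klein~\cite{klein1993vorlesungen}, and in fact only matters up to an overall $\SLNC[2]$-change of coordinate on $\mb{CP}^1$, which does not alter the isomorphism type of the Lie algebra $\mf{g}\otimes_\mb{C}\mc{O}(\mb{CP}^1\setminus S)$. Once this normalisation is in place the isomorphism of Lie algebras follows directly, because both sides are just $\mf{g}$ tensored with the same commutative ring.
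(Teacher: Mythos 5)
Your proposal is correct and follows essentially the same route as the paper: invoke Theorems \ref{thm:alias odd group} and \ref{thm:alias even group} via the explicit nonvanishing weight-$1$ forms (resp.\ $\theta_i^4$ for $\Gamma(2)$), then identify $\mfk[{\Gamma(N)}]{0}$ with the polyhedral coordinate ring using Klein's identification of $X(N)$ with $\mb{CP}^1$ sending cusps to the vertex sets. The paper merely states the theorem as a consequence of Section \ref{sec:zero} and then spells out the same identification case by case via explicit Hauptmoduls ($\lambda$, $-\gamma/3$, $\mu$, etc.), so your argument matches its intent, including the observation that the normalisation only matters up to a M\H{o}bius change of coordinate.
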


In the rest of the section we present more details about this isomorphisms using the explicit description of the corresponding modular forms going back to Klein and Fricke \cite{klein1966vorlesungenI,klein1966vorlesungenII}.

\subsection{Theta series and modular forms}
\label{sec:theta}

We will need now the following classical way to produce some modular forms going back to Jacobi, where we use the notations of Whittaker and Watson \cite{whittaker1996a} with $$q=e^{\pi i \tau},$$ 
rather than $e^{2\pi i \tau}$ as used before.
The theta series (or, theta constants) are given by
\newcommand{\jj}{\theta_2}
\newcommand{\jjj}{\theta_3}
\newcommand{\jjjj}{\theta_4}
\begin{align*}
\jj(\tau)&=\sum_{n\in\mb{Z}}q^{(n+1/2)^2}=2q^{1/4}+2q^{9/4}+2q^{25/4}+2q^{49/4}\ldots,
\\\jjj(\tau)&=\sum_{n\in\mb{Z}}q^{n^2}=1+2 q+2 q^{4}+2 q^{9}+\ldots,
\\\jjjj(\tau)&=\sum_{n\in\mb{Z}}(-1)^nq^{n^2}=1-2 q+2 q^{4}-2 q^{9}+\ldots.
\end{align*}
They are the values at $z=0$ of corresponding theta functions $\theta_k(z,\tau)$ introduced by Jacobi.

We will recall a few facts about theta series here, which can be found in \cite{mumford1983tata} (where another common notation is used: $\theta_{10}=\jj$, $\theta_{00}=\jjj$ and $\theta_{01}=\jjjj$). 

The theta series satisfy Jacobi's identity
\beq{eq:jacobi}
\jj^4+\jjjj^4=\jjj^4.
\eeq
Transforming the parameter by the modular group gives
\begin{align}
\label{eq:T on theta}
\begin{pmatrix}
\jj\\\jjj\\\jjjj
\end{pmatrix}\!\!(\tau+1)
&=
\begin{pmatrix}
\zeta&0&0\\
0&0&1\\
0&1&0
\end{pmatrix}
\begin{pmatrix}
\jj\\\jjj\\\jjjj
\end{pmatrix}\!\!(\tau)
\\
\label{eq:S on theta}
\begin{pmatrix}
\jj\\\jjj\\\jjjj
\end{pmatrix}\!\!(-1/\tau)
&=
\tau^{1/2}\zeta^{-1}
\begin{pmatrix}
0&0&1\\
0&1&0\\
1&0&0
\end{pmatrix}
\begin{pmatrix}
\jj\\\jjj\\\jjjj
\end{pmatrix}\!\!(\tau)
\end{align}
where $\zeta=\exp{2\pi i/8}$.
Gannon \cite{gannon2014the} gives an interpretation of this vector as a VVMF in a more general definition than presented in this paper.

The product $\jj^8\jjj^8\jjjj^8$ is a $\SLNZ[2]$-modular form of weight $12$ and therefore a constant multiple of $\Delta=q\prod_{n\ge 1} (1-q^n)^{24}$. The theta series are in particular never zero in $\uh$ and at least one of them is zero at any cusp.

\subsection{$\Gamma(2)$ and theta constants}
\label{subsec:gamma2}

The following result is well-known in the theory of modular forms (see e.g. \cite{rankin1977modular}).
\begin{Theorem}
\label{lem:Gamma(2) modular forms}
The ring of $\Gamma(2)$-modular forms is generated by $\jj^4,\jjj^4,\jjjj^4$
with the only relation being $\jj^4+\jjjj^4=\jjj^4$. In particular, $$\mfpz[\Gamma(2)]\cong \mb{C}[x,y].$$
\end{Theorem}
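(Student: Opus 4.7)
The plan is to combine the transformation laws already quoted in the paper with a dimension count for $M_k(\Gamma(2))$ and algebraic independence of $\theta_2^4,\theta_3^4$ established via $q$-expansions.

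\textbf{Step 1 (modularity).} First I would verify that $\theta_2^4,\theta_3^4,\theta_4^4 \in \mfpk[{\Gamma(2)}]{2}$. The group $\Gamma(2)$ is generated by $-\Id$, $T^2=\left(\begin{smallmatrix}1&2\\0&1\end{smallmatrix}\right)$ and $ST^{2}S^{-1}$. Iterating the formula (\ref{eq:T on theta}) shows $\theta_i(\tau+2)=\zeta^{2}\theta_i(\tau)$ for $i=2$ and $\theta_i(\tau+2)=\theta_i(\tau)$ for $i=3,4$, hence $\theta_i^4$ is $T^2$-invariant since $\zeta^{8}=1$. Combining (\ref{eq:S on theta}) with the $T$-action one obtains the action of $ST^2S^{-1}$ on $\theta_i^4$ and checks the weight-$2$ modular transformation law. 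The $-\Id$ invariance is automatic because $\theta_i$ are even in the relevant sense, and the $q$-series show each $\theta_i^4$ is holomorphic at every cusp. Thus $\theta_2^4,\theta_3^4,\theta_4^4\in \mfpk[{\Gamma(2)}]{2}$, and Jacobi's identity (\ref{eq:jacobi}) is the relation $\theta_2^4+\theta_4^4=\theta_3^4$.

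\textbf{Step 2 (dimension count).} I would next invoke the standard dimension formula for modular forms on $\Gamma(2)$. Since $\Gamma(2)$ has genus $0$, no elliptic points, three cusps (at $0,1,\infty$), and index $6$ in $\SLNZ[2]$, the valence formula (or Riemann--Roch on $X(2)$) gives
\[
\dim \mfpk[{\Gamma(2)}]{2\ell}=\ell+1\qquad(\ell\ge 0),
\]
and $\mfpk[{\Gamma(2)}]{k}=0$ when $k$ is odd (because $-\Id\in\Gamma(2)$).

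\textbf{Step 3 (spanning).} Using the Jacobi relation to eliminate $\theta_4^4$, the subring generated by $\theta_2^4,\theta_3^4,\theta_4^4$ contains, in weight $2\ell$, the $\ell+1$ monomials
\[
\theta_2^{4a}\,\theta_3^{4b},\qquad a+b=\ell,\quad a,b\ge 0.
\]
To conclude they span $\mfpk[{\Gamma(2)}]{2\ell}$ it suffices, by Step~2, to show linear independence. This I would do via the $q$-series already quoted: $\theta_2^4=16q+\cdots$ starts in $q$, while $\theta_3^4=1+8q+\cdots$ starts in $q^0$. Therefore the monomial $\theta_2^{4a}\theta_3^{4b}$ begins with $16^{a}q^{a}$, and the family $\{\theta_2^{4a}\theta_3^{4b}\}_{a+b=\ell}$ has pairwise distinct leading $q$-exponents, forcing linear independence.

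\textbf{Step 4 (polynomial ring).} Steps 1--3 show that $\theta_2^4,\theta_3^4$ alone already generate $\mfpz[\Gamma(2)]$ as a $\mb{C}$-algebra, and Step~3 shows in particular that no nontrivial polynomial relation exists between them (any such relation would give a linear dependence among the monomials above in some weight). Hence the map $\mb{C}[x,y]\to\mfpz[\Gamma(2)]$, $x\mapsto\theta_2^4$, $y\mapsto\theta_3^4$, is an isomorphism of graded rings, and $\theta_4^4$ is then determined by Jacobi's identity, so this is the unique relation.

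The main obstacle I anticipate is not Step~3, which is essentially a $q$-expansion bookkeeping exercise, but rather Step~2: producing the dimension formula for $\Gamma(2)$ cleanly. Once that is in hand, everything else reduces to checking leading terms of the explicit $q$-series listed in Section~\ref{sec:theta}.
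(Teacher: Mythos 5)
Your proposal is correct, and there is nothing in the paper to compare it against: the paper states this as a classical fact and simply cites Rankin, giving no proof of its own. Your argument (weight-$2$ modularity of the $\theta_i^4$ from the quoted transformation laws, the dimension count $\dim M_{2\ell}(\Gamma(2))=\ell+1$, and linear independence of the monomials $\theta_2^{4a}\theta_3^{4b}$ via distinct leading $q$-exponents) is exactly the standard textbook proof behind that citation. The only point worth tightening is in Step 1: the $q$-series alone give holomorphy at the cusp $\infty$, and holomorphy at the cusps $0$ and $1$ should be deduced from the fact that the $S$- and $T$-transformations permute the three functions $\theta_i^4$ up to the automorphy factor.
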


The $\Gamma(2)$-modular forms can also be constructed starting from the Eisenstein series. One can check that
$$
F_2(\tau)=2E_2(2\tau)-E_2(\tau), \quad H_2(\tau)=F_2(\tau/2)
$$
are $\Gamma(2)$-modular forms of weight $2$.
If we recall the expansion
\[E_2=1 - 24q^{2} - 72q^{4} - 96q^{6} - 168q^{8} - 144q^{10} + \ldots,\quad q=\exp(\pi i \tau),\]
we can compute
\begin{align*}
F_2&=1 + 24q^{2} + 24q^{4} + \ldots
\\H_2&=1 + 24q + 24q^{2} + 96q^{3} + 24q^{4} + 144q^{5}+ 
 \ldots.
\end{align*}
Taking linear combinations of $F_2$ and $H_2$ to constructing a cusp form for each of the three cusps of $\Gamma(2)$ leads to the fourth powers of the theta series:
\begin{align*}
\jj^4&=-\frac{2}{3}F_2+\frac{2}{3}H_2,
\\\jjj^4&=\frac{2}{3}F_2+\frac{1}{3}H_2,
\\\jjjj^4&=\frac{4}{3}F_2-\frac{1}{3}H_2.
\end{align*}

From (\ref{eq:T on theta}) and (\ref{eq:S on theta}) we see that
\begin{align*}
\begin{pmatrix}
\jj^4(\tau+1)\\\jjj^4(\tau+1)\\\jjjj^4(\tau+1)
\end{pmatrix}
&=
\begin{pmatrix}
-1&0&0\\
0&0&1\\
0&1&0
\end{pmatrix}
\begin{pmatrix}
\jj^4(\tau)\\\jjj^4(\tau)\\\jjjj^4(\tau)
\end{pmatrix}
\\
\begin{pmatrix}
\jj^4(-1/\tau)\\\jjj^4(-1/\tau)\\\jjjj^4(-1/\tau)
\end{pmatrix}
&=
-\tau^{2}
\begin{pmatrix}
0&0&1\\
0&1&0\\
1&0&0
\end{pmatrix}
\begin{pmatrix}
\jj^4(\tau)\\\jjj^4(\tau)\\\jjjj^4(\tau)
\end{pmatrix}
\end{align*}
so that $(\jj^4,\jjj^4,\jjjj^4)$ is a VVMF for the full modular group of weight $2$. The image of the representation is isomorphic to $S_3\cong \SLNZ[2]/\Gamma(2)$.

Let us introduce the classical $\lambda$-invariant (see \cite[Section 7.2]{rankin1977modular}) as
\[\lambda=\frac{\jj^4}{\jjj^4}.\]
This meromorphic function has no zeros or poles in $\uh$ since the theta functions have no zeros in $\uh$. Its meromorphic extension to $\overline{\uh}$ defines an isomorphism of Riemann surfaces $\Gamma(2)\backslash\overline{\uh}\to\overline{\mb{C}}$. This isomorphism sends the cusps to 
\[{[}\infty]\mapsto 0,\quad [0]\mapsto 1,\quad [-1]\mapsto\infty.\] 
It follows that $\lambda$ is a Hauptmodul for $\Gamma(2)$, in the sense that every  $\Gamma(2)$-invariant meromorphic function on the upper half plane is a rational function in $\lambda$. In particular, the Hauptmodul $j$ for $\Gamma(1)$ is a rational function in $\lambda$, and in fact 
\[j=256\frac{(\lambda^2-\lambda+1)^3}{\lambda^2(\lambda-1)^2}.\] 
The $\lambda$-invariant appears also in the parametrisation of the elliptic curves in the form
$$
y^2=x(x-1)(x-\lambda), \quad \lambda \neq 0,1.
$$

Our interest is in the algebra $\mfk[\Gamma(2)]{0}$ of $\Gamma(2)$-invariant meromorphic functions on $\overline{\uh}$, which are holomorphic on $\uh$. By the preceding paragraph, these are precisely the rational functions in $\lambda$ which are only allowed poles at $\lambda\in\{\infty,0,1\}$, that is
\beq{eq:Gamma(2) modular functions}
\mfk[\Gamma(2)]{0}=\mb{C}\left[\lambda,\lambda^{-1},(\lambda-1)^{-1} \right].
\eeq
The action of the modular group
$$T\lambda=\frac{\lambda}{\lambda-1},\quad S\lambda=1-\lambda,$$
deduced from (\ref{eq:T on theta},\ref{eq:S on theta},\ref{eq:jacobi}) realises the full group of M\H{o}bius transformations preserving the set $\{\infty,0,1\}$.

As a special case of Theorem \ref{thm:alias even group}, using that $\theta_i^4$ is a nonvanishing modular form of weight $2$, we have 
\begin{Theorem}
\label{thm:Gamma(2) alias}
The algebra $\alia[\Gamma(2)]$ with $\rho(-\Id)=\id$ is isomorphic to the dihedral loop algebra
\[\alia[\Gamma(2)]\cong \mf{g}\otimes_{\mathbb C} \mfk[\Gamma(2)]{0}\]
as Lie algebra and module over $\mfk[\Gamma(2)]{0}=\mb{C}\left[\lambda,\lambda^{-1},(\lambda-1)^{-1} \right]$.
\end{Theorem}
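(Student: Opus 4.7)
The plan is to assemble Theorem \ref{thm:Gamma(2) alias} from two ingredients already in place in the paper: the general isomorphism of Theorem \ref{thm:alias even group} and the explicit identification (\ref{eq:Gamma(2) modular functions}) of the weight-zero weakly holomorphic modular forms of $\Gamma(2)$. So there is essentially no new analysis to carry out; the task is verifying the hypotheses of Theorem \ref{thm:alias even group} and reading off the ring $\mfk[\Gamma(2)]{0}$.

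First, I would check that Theorem \ref{thm:alias even group} applies. The group $\Gamma(2)$ is even since $-\Id\equiv\Id\pmod 2$, so $-\Id\in\Gamma(2)$. The hypothesis $\rho(-\Id)=\id$ is imposed. Finally, one of the theta-constant fourth powers $\jj^4,\jjj^4,\jjjj^4$ is required as a nonvanishing element of $\mfpk[\Gamma(2)]{2}$: these are weight $2$ modular forms for $\Gamma(2)$ by Theorem \ref{lem:Gamma(2) modular forms}, and the theta constants have no zeros on $\uh$ as recalled in subsection \ref{sec:theta} (the product $\jj^8\jjj^8\jjjj^8$ is a constant multiple of $\Delta$, which has no zeros in $\uh$). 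Thus Theorem \ref{thm:alias even group} yields an isomorphism of $\mb{Z}$-graded Lie algebras and $\mfk[\Gamma(2)]{0}$-modules
\[
\alia[\Gamma(2)]\cong \mf{g}\otimes_{\mathbb C}\mfk[\Gamma(2)]{0}.
\]

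Second, I would identify the ring $\mfk[\Gamma(2)]{0}$ with $\mb{C}[\lambda,\lambda^{-1},(\lambda-1)^{-1}]$; this is exactly (\ref{eq:Gamma(2) modular functions}). The justification, already given in subsection \ref{subsec:gamma2}, is that $\lambda=\jj^4/\jjj^4$ is a Hauptmodul for $\Gamma(2)$ defining an isomorphism $\Gamma(2)\backslash\overline{\uh}\to\overline{\mb{C}}$ that sends the three cusps $[\infty],[0],[-1]$ to $0,1,\infty$; hence the meromorphic functions on $\overline{\uh}$ invariant under $\Gamma(2)$ and holomorphic on $\uh$ are precisely the rational functions in $\lambda$ with poles confined to $\{0,1,\infty\}$.

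Combining the two steps gives the asserted isomorphism with the dihedral loop algebra. The only place where care is needed is making sure the module structure in Theorem \ref{thm:alias even group} matches the one claimed in Theorem \ref{thm:Gamma(2) alias}: the isomorphism of Theorem \ref{thm:alias even group} is $\mfk[\Gamma(2)]{0}$-linear by construction (it is the weight-zero part of the $\mfz$-linear isomorphism (\ref{isom21}) restricted via $\mfk{0}\subset\mfz$), so identifying $\mfk[\Gamma(2)]{0}$ with $\mb{C}[\lambda,\lambda^{-1},(\lambda-1)^{-1}]$ transports the module structure correctly. I do not foresee a genuine obstacle; the argument is just a bookkeeping exercise once Theorem \ref{thm:alias even group} and the Hauptmodul description of $\mfk[\Gamma(2)]{0}$ are in hand.
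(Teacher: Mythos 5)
Your proposal is correct and follows exactly the paper's route: the theorem is stated there as a special case of Theorem \ref{thm:alias even group}, invoking the nonvanishing weight-$2$ forms $\theta_i^4$ and the identification $\mfk[\Gamma(2)]{0}=\mb{C}[\lambda,\lambda^{-1},(\lambda-1)^{-1}]$ via the Hauptmodul $\lambda$. Your additional remark about the compatibility of the module structures is a harmless bit of extra care that the paper leaves implicit.
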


To conclude we briefly describe interesting results of Hartwig and Terwilliger \cite{hartwig2007tetrahedron} about their {\it tetrahedron algebra}, which is isomorphic to the $\mf{sl}(2,\mb{C})$ case of our dihedral loop algebra.

Their tetrahedron algebra $\boxtimes$ is defined as the Lie algebra over a field $\mb{K}$ with generators
\[X_{i,j},\quad i,j=1,2,3,4,\quad i\ne j\]
satisfying the relations
\begin{enumerate}
\item $X_{ij}+X_{ji}=0$ for distinct $i,j$,
\item $[X_{hi},X_{ij}]=2X_{hi}+2X_{ij}$ for mutually distinct $h,i,j$,
\item $[X_{hi},[X_{hi},[X_{hi},X_{jk}]]]=4[X_{hi},X_{jk}]$ for mutually distinct $h,i,j,k$.
\end{enumerate}
It has the following interesting properties \cite{hartwig2007tetrahedron}.
\begin{Theorem} [Hartwig and Terwilliger]
Let us  label the vertices of a tetrahedron by the indices $1,2,3,4$ and identify each edge $\{i,j\}$ of the tetrahedron with a one-dimensional subspace $\mb{K}X_{ij}$ of $\boxtimes$. Then
\begin{enumerate}
\item the boundary of one face of the tetrahedron spans a subalgebra of $\boxtimes$ isomorphic to $\mf{sl}(2,\mb{K})$,
\item two edges that don't share a vertex generate a subalgebra of $\boxtimes$ isomorphic to the Onsager algebra,
\item five out of six edges generate a subalgebra of $\boxtimes$ isomorphic to the loop algebra  $\mf{sl}(2,\mb{K})\otimes_\mb{K} \mb{K}[t,t^{-1}]$,
\item there is an isomorphism \[\boxtimes\cong \mf{sl}(2,\mb{K})\otimes_\mb{K} \mb{K}[t,t^{-1},(t-1)^{-1}].\]
\item Let $\mf{O}$, $\mf{O}'$ and $\mf{O}''$ be the subalgebras generated by the three pairs of opposite edges (each isomorphic to the Onsager algebra due to the second item). Then there is a direct sum $\boxtimes=\mf{O}\oplus\mf{O}'\oplus\mf{O}''$ of $\mb{K}$-spaces.
\end{enumerate}
\end{Theorem}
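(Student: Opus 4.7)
The strategy is to prove item (4) by constructing an explicit isomorphism, and then to deduce items (1), (2), (3), (5) as corollaries of this concrete model. For item (1), I would first observe that, for a single face, the relations among $(X_{hi}, X_{ij}, X_{jh})$ coincide with the \emph{equitable presentation} of $\mathfrak{sl}_2(\mb{K})$: three generators $X, Y, Z$ with $[X,Y] = 2(X+Y)$, $[Y,Z] = 2(Y+Z)$, $[Z,X] = 2(Z+X)$. This is a well-known alternative presentation of $\mathfrak{sl}_2$, so each face subalgebra is isomorphic to $\mathfrak{sl}_2(\mb{K})$.

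For item (4), I would exhibit a homomorphism $\Phi \colon \boxtimes \to \mathfrak{sl}_2(\mb{K}) \otimes_\mb{K} \mb{K}[t, t^{-1}, (t-1)^{-1}]$ by assigning to each of the six edges $X_{ij}$ an explicit element of the 3-point loop algebra. The key design criterion is that each pair of opposite edges should span an Onsager subalgebra in the sense of Roan \cite{roan1991onsager}, with the three Onsager subalgebras attached to the three involutions of $\mb{P}^1$ preserving $\{0, 1, \infty\}$ (namely $t \mapsto 1/t$, $t \mapsto 1 - t$, and $t \mapsto t/(t-1)$). Then the antisymmetry relation is automatic, the quadratic face relation reduces to the equitable presentation of step 1, and the cubic relation on disjoint edges is exactly the Dolan--Grady relation, which holds inside any Onsager algebra. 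Surjectivity of $\Phi$ follows by checking that the images generate, and injectivity follows from a $\mb{Z}$-grading argument together with the direct-sum decomposition established in item (5).

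Items (2), (3) and (5) then drop out of the explicit model. Item (2) is built into the construction. Item (3) follows because any five edges still contain enough face-equitable triples to generate $\mathfrak{sl}_2(\mb{K}) \otimes_\mb{K} \mb{K}[t, t^{-1}]$, the omitted edge corresponding precisely to the removal of one pole. Item (5) follows because the three Onsager subalgebras have distinct pole loci and hence intersect trivially, while spanning the target by a Hilbert-series comparison. The principal obstacle I foresee is the explicit assignment of the six edges to elements of the 3-point loop algebra so that all three families of relations hold simultaneously; the saving observation is that the combinatorics of the tetrahedron (three pairs of opposite edges, four faces) mirrors the combinatorics of the 3-point configuration (three involutions preserving $\{0, 1, \infty\}$, together with Roan's description of the Onsager subalgebra as the fixed points of such an involution), so once the correspondence between pairs of opposite edges and involutions is set up, the cubic relation is essentially inherited from the Dolan--Grady relation and requires no additional computation.
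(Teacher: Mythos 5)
First, a point of reference: the paper does not prove this statement at all. It is quoted from Hartwig and Terwilliger \cite{hartwig2007tetrahedron} as background for the dihedral case $\Gamma(2)$, so there is no internal proof to compare yours against; the comparison below is with the original argument in \cite{hartwig2007tetrahedron}, whose broad outline (equitable presentation of $\mf{sl}(2,\mb{K})$ for the faces, Dolan--Grady relations for opposite edges, an explicit homomorphism onto the three-point loop algebra) your proposal does correctly anticipate.

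That said, as a proof your proposal has genuine gaps. The entire content of item (4) is the explicit assignment of the six edges to elements of $\mf{sl}(2,\mb{K})\otimes_{\mb{K}}\mb{K}[t,t^{-1},(t-1)^{-1}]$ together with the verification of the three families of relations; you defer exactly this step, and your ``saving observation'' does not supply it. Worse, the design criterion you propose is based on a false picture: Roan \cite{roan1991onsager} realises the Onsager algebra as the fixed points of an involution of the \emph{two-point} loop algebra $\mf{sl}(2,\mb{K})\otimes\mb{K}[z,z^{-1}]$, whereas the fixed-point subalgebra of an involution of the three-point algebra permuting $\{0,1,\infty\}$ is strictly larger than the Onsager algebra, and the three such fixed-point subalgebras intersect in the (nonzero) $S_3$-invariants rather than trivially. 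So neither the identification of the opposite-edge subalgebras nor the ``distinct pole loci, hence trivial intersection'' argument for item (5) can work as stated; in \cite{hartwig2007tetrahedron} the Onsager subalgebras arise from the Dolan--Grady presentation and the direct sum is checked by an explicit basis computation in the target. There is also a circularity: you derive injectivity of $\Phi$ in (4) from the decomposition (5), while deriving (5) from the explicit model of (4). The correct order is to prove the spanning statement $\boxtimes=\mf{O}+\mf{O}'+\mf{O}''$ abstractly in $\boxtimes$, use the Dolan--Grady presentation to see each summand is a quotient of the Onsager algebra, and then check that the three images under $\Phi$ are independent and each a full copy, which yields injectivity and directness simultaneously. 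Finally, two smaller points: item (1) needs the generators $X_{ij}$ to be nonzero in $\boxtimes$ (a surjection from the simple algebra $\mf{sl}(2,\mb{K})$ could a priori vanish), which again only comes from the nontrivial homomorphism of (4); and the one-line justification of item (3) does not follow from your set-up, since each ``pole'' is associated with a pair of opposite edges and only one member of the pair is omitted.
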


\subsection{$\Gamma(3)$ and the Hesse cubic}
\label{subsec:gamma3}

The principal congruence subgroup $\Gamma(3)$ is related to the Hesse form of the elliptic curves given in projective coordinates as cubics
\beq{Hesse}
x^3+y^3+z^3+\gamma xyz=0, \quad \gamma^3+27 \neq 0.
\eeq
The Hesse parameter $\gamma$ is related to $j$-invariant by the formula
\beq{jg}
j=\frac{\gamma^3(216-\gamma^3)^3}{(27+\gamma^3)^3}
\eeq
and is a Hauptmodul of $\Gamma(3)$ (see \cite{dolgachev2005lectures}).

The corresponding quotient $Y(3)=\Gamma(3)\backslash\uh$ is a sphere punctured at 4 points: infinity and 3 cube roots of unity, with an isomorphism established by the function $-\gamma/3.$ This implies that
$$
\mfk[\Gamma(3)]{0}\cong \mathbb C[z, (z-1)^{-1}, (z-\varepsilon)^{-1}, (z-\bar\varepsilon)^{-1}], \quad \varepsilon =e^{\frac{2\pi i}{3}}
$$
and that the corresponding automorphic Lie algebra $\aliaz$ is isomorphic to the tetrahedral loop algebra.

Note that the tetrahedral loop algebra is a particular case of 4-point loop algebras
studied by Bremner \cite{bremner1995four}. 

The ring of modular forms 
$
\mfpz[\Gamma(3)]=\mathbb C[\varphi_1, \,\varphi_2]
$
of $\Gamma(3)$ is generated by two weight 1 forms
\beq{m3}
\varphi_1=\sum_{(x,y)\in \mathbb Z^2}q^{x^2-xy+y^2}, \quad \varphi_2=q^{1/3}\sum_{(x,y)\in \mathbb Z^2}q^{x^2-xy+y^2+x-y},
\eeq
appeared in connection with the weight enumerators of ternary self-dual codes (see \cite{ebeling2013lattices, bannai2001some}). 
They are related to the Eisenstein series by the formulae
\beq{rel3}
E_4=u^4+8uv^3, \,\, E_6=u^6-20u^3v^3-8v^6, \quad u=\varphi_1, \, v=\varphi_2.
\eeq
The modular form $\varphi_1$ is known also as the Eisenstein form $E_{1,3}$ and has appeared in the classification of integrable Lagrangian partial differential equations (PDEs) done by Ferapontov and Odesski \cite{ferapontov2010integrable}. More precisely, they have shown that it satisfies the following nonlinear fourth order ordinary differential equation (ODE)
$$
g''''(g^2g''-2g(g')^2)-9(g')^2(g'')^2+2g g'g''g'''+8(g')^3g'''-g^2(g''')^2=0,
$$
which is the integrability condition of PDEs with Lagrangian of the form $f=u_xu_yg(u_t)$ (see section 3 in \cite{ferapontov2010integrable}).

\subsection{$\Gamma(4)$ and the octahedron} 
\label{subsec:gamma4}
We will follow here mainly the excellent presentation of
$\Gamma(4)$-modular forms by Mumford in Chapter 1 of his book \cite{mumford1983tata}. 

Consider first the set $S=\{\infty,0,\pm 1, \pm i\} \subset \mathbb CP^1\cong S^2$, which we regard as the vertices of an octahedron.
We will see that the quotient $$\Gamma(4)\backslash\uh\cong \mathbb CP^1\setminus S$$ is the complex sphere punctured at this set.

One can check that the group of holomorphic automorphisms of $\overline{\mb{C}}$ preserving this set is indeed the octahedral group generated by
\beq{symoc}
\mc{T}:t\mapsto \frac{\zeta^3 t+\zeta}{\zeta t+\zeta^3},\quad
\mc{S}:t\mapsto\frac{-t+1}{t+1}.
\eeq
where $\zeta=\exp(2\pi i/8)$. These two M\"obius transformation satisfy $$\mc{T}^4=\id,\quad\mc{S}^2=\id,\quad(\mc{T}\mc{S})^3=\id.$$

\begin{Theorem} [\cite{mumford1983tata}]
\label{thm:Gamma(4) modular curve}
The compactification $X(4)$ of modular curve  is isomorphic as a Riemann surface to $A \cong \mb{C}P^1 \cong \overline{\mb{C}}$. Isomorphisms are realised by the map
$\Gamma(4)\backslash\overline{\uh}\to A=\{[x_0:x_1:x_2]\in\mb{C}P^2\,|\,x_0^2=x_1^2+x_2^2\}$ given by
\[\tau\mapsto [\jjj^2(\tau):\jjjj^2(\tau):\jj^2(\tau)],\]
the map $A\to\mb{C}P^1$ given by
\[x_0\mapsto t_0^2+t_1^2,\quad x_1\mapsto 2t_0t_1,\quad x_2\mapsto t_0^2-t_1^2,\]
where $t_0,t_1$ are homogeneous coordinates of $\mb{C}P^1$,
and the map $\mb{C}P^1\to\overline{\mb{C}}$ given by 
$$[t_0:t_1]\mapsto\left\{\begin{array}{ll} t=t_1/t_0&\text{if } t_0\ne 0,\\ \infty&\text{if }t_0=0.\end{array}\right.$$ 
The cusps land in the following points.
$$\begin{array}{cccc}
\Gamma(4)\backslash\overline{\uh}&A&\mb{C}P^1&\overline{\mb{C}}
\\\hline
{[}\infty]&[1:1:0]&[1:1]&1
\\{[}0]&[1:0:1]&[1:0]&\infty
\\{[}1/2]&[1:-1:0]&[1:-1]&-1
\\{[}1]&[0:1:i]&[i:1]&i
\\{[}2]&[1:0:-1]&[0:1]&0
\\{[}3]&[0:1:-i]&[-i:1]&-i
\end{array}$$
Restriction to the upper half plane establishes an isomorphism $\Gamma(4)\backslash\uh\cong\overline{\mb{C}}\setminus\{0,\infty,\pm1, \pm i\}.$
\end{Theorem}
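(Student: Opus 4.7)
The approach has three moving parts: show the theta map descends to $X(4)$, show it lands on the conic $A$, and identify the composition with a degree-one biholomorphism to $\overline{\mb{C}}$. First, observe that Jacobi's identity (\ref{eq:jacobi}), which reads $\theta_3^4 = \theta_2^4 + \theta_4^4$, is precisely the equation $x_0^2 = x_1^2 + x_2^2$ of $A$ after the substitution $x_0 = \theta_3^2$, $x_1 = \theta_4^2$, $x_2 = \theta_2^2$. Since each theta constant is nowhere vanishing on $\uh$ (as recalled in Subsection \ref{sec:theta}), the triple has no common zero there and defines a holomorphic map $\uh \to A$.

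To descend to $\Gamma(4)\backslash\uh$, I would square the transformation laws (\ref{eq:T on theta}) and (\ref{eq:S on theta}). The three components $\theta_i^2$ all carry weight one, so the overall $(c\tau+d)$ factor produced by any $\gamma \in \SLNZ[2]$ is common and cancels in $\mb{C}P^2$, while the remaining signed-permutation action of the generators $S$ and $T$ factors through a finite quotient of $\SLNZ[2]$. A direct check on an explicit generating set of $\Gamma(4)$ (for instance $T^4$, $ST^4S$, and their conjugates) shows that $\Gamma(4)$ acts trivially on the projective triple, so the map descends. It extends holomorphically to the cusps: at $[\infty]$ the $q$-expansions of the theta series immediately give the limit $[1:1:0]$, and for each remaining cusp I would conjugate by an element of $\SLNZ[2]$ sending that cusp to $\infty$ and apply the transformation rules just derived to read off the corresponding entry in the table. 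The six resulting values are distinct points of $A$.

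The remaining two maps are essentially formal. The parametrization $[t_0:t_1]\mapsto[t_0^2+t_1^2 : 2t_0t_1 : t_0^2-t_1^2]$ lands in $A$ by a single algebraic identity and is the classical rational normal parametrization of a smooth plane conic, hence a biholomorphism $\mb{C}P^1 \to A$; an explicit inverse can be written down away from any chosen base point. The final affine chart $[t_0:t_1]\mapsto t_1/t_0$ identifies $\mb{C}P^1$ with $\overline{\mb{C}}$. Composing and inverting the parametrization at each cusp image produces the right-hand column of the table, in particular exhibiting the six cusps landing on the octahedron vertex set $\{0, \infty, \pm 1, \pm i\}$.

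Finally, to see the composition $X(4) \to \overline{\mb{C}}$ is a biholomorphism, both sides are compact Riemann surfaces of genus zero (the genus-zero claim for $\Gamma(4)$ is classical), and the map is non-constant, hence has a well-defined finite degree. Picking the cusp image $1 \in \overline{\mb{C}}$, corresponding to $[1:1:0] \in A$, any preimage in $X(4)$ would have $\theta_2^2 = 0$ and $\theta_3^2 = \theta_4^2$, and since $\theta_2$ is nowhere zero on $\uh$ such a preimage can only be a cusp; inspection of the table shows $[\infty]$ is the unique such cusp, forcing degree one and hence biholomorphicity. The main subtlety lies in the $\Gamma(4)$-invariance step: the eighth roots of unity appearing in (\ref{eq:T on theta})--(\ref{eq:S on theta}) require careful bookkeeping after squaring, and one must verify that the kernel of the induced projective representation of $\SLNZ[2]$ on $\mb{C}P^2$ is precisely $\Gamma(4)$ rather than a proper overgroup.
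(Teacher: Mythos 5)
The paper itself offers no proof of this statement --- it is imported verbatim from Mumford's \emph{Tata Lectures on Theta}, with only the citation --- so your argument has to be judged on its own merits against the standard route, which is indeed the one you follow: Jacobi's identity places the squared theta triple on the conic $A$, the transformation laws show the projective map descends to $X(4)$, the conic is rationally parametrised, and a degree count finishes. Most of this is sound. The descent step in particular works as you say: the kernel of the induced projective action is a normal subgroup of $\SLNZ[2]$, one computes that $T$ acts by $[x_0:x_1:x_2]\mapsto[x_1:x_0:ix_2]$ so $T^4$ lies in the kernel, and the normal closure of $T^4$ in $\SLNZ[2]$ is $\Gamma(4)$. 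Note, though, that for descent you only need the kernel to \emph{contain} $\Gamma(4)$; that it equals $\Gamma(4)$ is neither needed there nor sufficient for injectivity of the descended map, so the ``main subtlety'' you flag at the end is not where the real difficulty sits. (Two small points: $ST^4S=-\,ST^4S^{-1}\notin\Gamma(4)$ because $S^2=-\Id$, so your proposed generators should be the conjugates $ST^4S^{-1}$, etc.; and the affine chart printed in the theorem, $t=t_1/t_0$, is inconsistent with the rows $[0]$ and $[2]$ of the table --- it should be $t=t_0/t_1$ --- so do not be alarmed if your verification of the last column disagrees with the printed one in exactly those two places.)

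The genuine gap is the final step ``unique preimage, hence degree one.'' Showing that the fibre over $[1:1:0]$ is the single cusp $[\infty]$ only gives $\deg = e$, where $e$ is the ramification index of the map at that cusp; a totally ramified point of a degree-$d$ cover also has a unique set-theoretic preimage. You must additionally show the map is unramified at $[\infty]$. This is a short computation: the cusp $\infty$ of $\Gamma(4)$ has width $4$, so the local coordinate on $X(4)$ there is $q_4=e^{2\pi i\tau/4}$, and in the affine chart $x_2/x_0=\theta_2^2/\theta_3^2$ near $[1:1:0]$ one has $\theta_2^2/\theta_3^2=4q_4+O(q_4^2)$ (using $\theta_2=2q^{1/4}+\cdots$ with $q=e^{\pi i\tau}$), a simple zero, whence $e=1$. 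Alternatively, run your nonvanishing argument over all six octahedron vertices: each has exactly one cuspidal preimage, so Riemann--Hurwitz for a degree-$d$ map from the genus-zero surface $X(4)$ gives $2d-2=\sum_p(e_p-1)\ge 6(d-1)$, forcing $d=1$. Either repair is routine, but as written the degree-one conclusion does not follow from what you proved.
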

This theorem shows that 
\beq{hmg4}
\mu=\frac{\jjjj^2}{\jj^2+\jjj^3}
\eeq
is a Hauptmodul for $\Gamma(4)$.
The algebra $\mfk[\Gamma(4)]{0}$ consists of the rational functions in $\mu$ which are only allowed poles at $\mu\in\{\infty,0,\pm1,\pm i\}$, that is
\beq{eq:Gamma(4) modular functions}
\mfk[\Gamma(4)]{0}=\mb{C}\left[\mu,\mu^{-1},(\mu-1)^{-1},(\mu+1)^{-1},(\mu-i)^{-1},(\mu+i)^{-1}\right].
\eeq

One can check also that the action of the modular group is given by formulae (\ref{symoc}) and thus is reduced to the action of the octahedral group.

The structure of the ring of modular forms can be deduced from Theorem \ref{thm:Gamma(4) modular curve}, see \cite[Corollary 10.2]{mumford1983tata}.
\begin{proposition}
\label{lem:Gamma(4) modular forms}
The ring of $\Gamma(4)$-modular forms is generated by $\jj^2,\jjj^2$ and $\jjjj^2$ 
with the only relation being $\jj^4+\jjjj^4=\jjj^4$. In particular, $$\mfpz[\Gamma(4)]\cong\mb{C}[x_0,x_1,x_2]/(x_0^2-x_1^2-x_2^2).$$
\end{proposition}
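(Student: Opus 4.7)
The strategy is to read the statement off the geometry already supplied by Theorem~\ref{thm:Gamma(4) modular curve}: that theorem presents $X(4)$ as the smooth conic $A\subset\mathbb{C}P^2$ via the weight-one theta squares, and the plan is to recognise $\mfpz[\Gamma(4)]$ as the homogeneous coordinate ring of $A$, which is $R:=\mathbb{C}[x_0,x_1,x_2]/(x_0^2-x_1^2-x_2^2)$. First I would verify, using the transformation rules (\ref{eq:T on theta})--(\ref{eq:S on theta}) restricted to generators of $\Gamma(4)$, that $\jj^2,\jjj^2,\jjjj^2$ are genuine weight-one modular forms for $\Gamma(4)$. Jacobi's identity (\ref{eq:jacobi}), $\jjj^4=\jj^4+\jjjj^4$, then yields a well-defined graded $\mathbb{C}$-algebra homomorphism
\[
\bar\phi:R\longrightarrow\mfpz[\Gamma(4)],\qquad x_0\mapsto\jjj^2,\quad x_1\mapsto\jjjj^2,\quad x_2\mapsto\jj^2.
\]

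Injectivity of $\bar\phi$ is geometric. If a homogeneous representative $P\in\mathbb{C}[x_0,x_1,x_2]$ satisfies $P(\jjj^2,\jjjj^2,\jj^2)\equiv 0$ on $\uh$, then by Theorem~\ref{thm:Gamma(4) modular curve} the polynomial $P$ vanishes on the Zariski-dense image of $Y(4)=\Gamma(4)\backslash\uh$ in $A$, hence on $A$ itself; since the homogeneous ideal of the irreducible conic $A$ is generated by $x_0^2-x_1^2-x_2^2$, the class of $P$ is zero in $R$.

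To finish, I would match Hilbert series degree by degree. The Koszul resolution $0\to\mathbb{C}[x_0,x_1,x_2](-2)\to\mathbb{C}[x_0,x_1,x_2]\to R\to 0$ gives
\[
H(R,t)=\frac{1-t^2}{(1-t)^3}=\sum_{k\geq 0}(2k+1)\,t^k.
\]
On the modular side, the linear system $\{\jjj^2,\jjjj^2,\jj^2\}$ is base-point-free and realises $X(4)\cong\mathbb{C}P^1$ as the degree-two curve $A\subset\mathbb{C}P^2$, so the weight-one automorphy line bundle on $X(4)$ is $\mathcal{O}(2)$; its $k$-th tensor power is $\mathcal{O}(2k)$, whence $\dim\mfpk[\Gamma(4)]{k}=2k+1$ for every $k\geq 0$, matching $\dim R_k$. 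Combined with injectivity, this makes $\bar\phi$ an isomorphism of graded rings.

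The main obstacle is cleanly identifying the weight-one automorphy line bundle on $X(4)$ as $\mathcal{O}(2)$, equivalently, showing that the three theta squares already span $\mfpk[\Gamma(4)]{1}$. The Veronese argument above is one route; alternatively, one can feed the data (genus zero, six regular cusps, no elliptic points) into the standard dimension formula for $\Gamma(N)$, $N\geq 3$, which returns $\dim\mfpk[\Gamma(4)]{k}=(k-1)(g-1)+\tfrac{k}{2}\nu_\infty=2k+1$ directly.
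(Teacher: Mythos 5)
Your argument is correct and is essentially the route the paper takes: the paper offers no proof of its own, deducing the statement from Theorem~\ref{thm:Gamma(4) modular curve} via the citation of \cite[Corollary 10.2]{mumford1983tata}, whose content is precisely your identification of $\mfpz[\Gamma(4)]$ with the homogeneous coordinate ring of the irreducible conic $A=\{x_0^2=x_1^2+x_2^2\}$. The one step you state as intent rather than carry out --- that $\theta_2^2,\theta_3^2,\theta_4^2$ are weight-one forms for $\Gamma(4)$ with trivial multiplier, which requires tracking the eighth roots of unity in (\ref{eq:T on theta})--(\ref{eq:S on theta}) through words representing generators of $\Gamma(4)$ --- is classical and is exactly what the reference supplies, and your dimension count $\dim\mfpk[\Gamma(4)]{k}=2k+1$ (genus $0$, six regular cusps, no elliptic points) correctly closes the surjectivity.
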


As a corollary and special case of Theorem \ref{thm:alias odd group} we have
\begin{Theorem}
\label{thm:Gamma(4) alias}
The algebra $\alia[\Gamma(4)]$ is isomorphic to the octahedral loop algebra
\[\alia[\Gamma(4)]\cong \mf{g}\otimes_{\mathbb C} \mfk[\Gamma(4)]{0}\]
as Lie algebra and module over $$\mfk[\Gamma(4)]{0}=\mb{C}\left[\mu,\mu^{-1},(\mu-1)^{-1},(\mu+1)^{-1},(\mu-i)^{-1},(\mu+i)^{-1}\right].$$
\end{Theorem}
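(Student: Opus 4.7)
The plan is to combine the structural result of Theorem \ref{thm:alias odd group} with the explicit description of the modular function field obtained in Theorem \ref{thm:Gamma(4) modular curve}. Since $-\Id \not\equiv \Id \pmod 4$, the group $\Gamma(4)$ is odd, so Theorem \ref{thm:alias odd group} applies as soon as we exhibit an element of $\mfpk[\Gamma(4)]{1}$ that is nonvanishing on $\uh$.

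First, I would invoke the explicit formula $\eta(4\tau)^4/\eta(2\tau)^2$ recalled in the proposition following Theorem \ref{thm:alias odd group}. This is a weight-$1$ modular form for $\Gamma(4)$, and because Dedekind's $\eta$-function has no zeros on $\uh$ (its only zero is at the cusp, as is clear from the product formula (\ref{Dedekind})), neither does this quotient. Thus the hypothesis of Theorem \ref{thm:alias odd group} is verified.

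Second, a direct application of Theorem \ref{thm:alias odd group} yields the Lie algebra and $\mfk[\Gamma(4)]{0}$-module isomorphism
\[\alia[\Gamma(4)] \;\cong\; \mf{g}\otimes_{\mathbb C} \mfk[\Gamma(4)]{0}.\]
It then remains to identify the coefficient ring with the six-point localisation defining the octahedral loop algebra (\ref{oct}). This is the content of (\ref{eq:Gamma(4) modular functions}), which in turn is a direct consequence of Theorem \ref{thm:Gamma(4) modular curve}: the Hauptmodul $\mu=\jjjj^2/(\jj^2+\jjj^2)$ induces an isomorphism $\Gamma(4)\backslash\overline{\uh}\to\overline{\mb{C}}$ sending the six cusps of $\Gamma(4)$ to $\{0,\infty,\pm 1,\pm i\}$, so the weakly holomorphic weight-$0$ modular forms for $\Gamma(4)$ are exactly the rational functions in $\mu$ with possible poles only at those six points.

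Putting these three observations together gives the claimed isomorphism with the octahedral loop algebra $\Oa$ defined in (\ref{oct}), and also explains the terminology, since the six allowed poles sit at the vertices of a regular octahedron inscribed in $\overline{\mb{C}}$. There is no real obstacle here: the deep ingredients (Theorem \ref{thm:aliahz}, Theorem \ref{thm:alias odd group}, and the classical identification of $X(4)$ in Theorem \ref{thm:Gamma(4) modular curve}) have already been proved, and this theorem is essentially their consolidation for the particular group $\Gamma(4)$. The only mild point of care is checking that $\Gamma(4)$ is odd so that Theorem \ref{thm:alias odd group} rather than Theorem \ref{thm:alias even group} is the correct tool, and that the weight-$1$ form exhibited is genuinely modular for $\Gamma(4)$ (rather than a larger group) — both of which are standard.
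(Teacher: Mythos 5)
Your proposal is correct and follows essentially the same route as the paper, which presents this theorem precisely as a corollary and special case of Theorem \ref{thm:alias odd group}, using the nonvanishing weight-$1$ form $\eta(4\tau)^4/\eta(2\tau)^2$ together with the identification of $\mfk[\Gamma(4)]{0}$ via the Hauptmodul $\mu$ from Theorem \ref{thm:Gamma(4) modular curve} and (\ref{eq:Gamma(4) modular functions}). (Incidentally, your formula $\mu=\jjjj^2/(\jj^2+\jjj^2)$ is the one consistent with the maps in Theorem \ref{thm:Gamma(4) modular curve}; the exponent in (\ref{hmg4}) appears to be a typo.)
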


\subsection{$\Gamma(5)$ and Klein's modular forms}
\label{subsec:gamma5}

We follow Schultz's lectures \cite{schultz2015notes} to present an explicit description of $\Gamma(5)$ modular forms going back to Klein and Fricke \cite{klein1966vorlesungenI,klein1966vorlesungenII}.

Recall the Klein forms (\ref{Klein}).
In \cite[Proposition 4.9.6]{schultz2015notes} we find
\begin{Theorem}
The weight $k$ modular forms of $\Gamma(5)$ can be expressed explicitly in terms of Klein's forms as follows
\beq{Klein2}
M_k(\Gamma(5))=\oplus_{a+b=5k,\, a,b\geq 0} \mathbb C \frac{\eta(5\tau)^{15k}}{\eta(\tau)^{3k}}\mf{k}_{\frac{1}{5},\frac{0}{5}}(5\tau)^a\mf{k}_{\frac{2}{5},\frac{0}{5}}(5\tau)^b,
\eeq
where $\eta(\tau)$ is the Dedekind eta-function (\ref{Dedekind}).
In particular, 
\beq{f}
f=\frac{\eta(5\tau)^{15}\mf{k}_{\frac{1}{5},\frac{0}{5}}(5\tau)^5}{\eta(\tau)^3}
\eeq
belongs to $\mfk[\Gamma(5)]{-1}$, has a pole of order 5 at infinity and does not vanish anywhere in $\uh$.
\end{Theorem}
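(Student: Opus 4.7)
The plan is to prove the decomposition by verifying three facts: (i) each monomial on the right-hand side is a holomorphic $\Gamma(5)$-modular form of weight $k$; (ii) the $5k+1$ monomials are linearly independent over $\mathbb C$; and (iii) this count matches $\dim M_k(\Gamma(5))$ computed independently. The ``in particular'' claim about $f$ then follows by specialisation and a short $q$-expansion computation.

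For (i), I would assemble the classical transformation laws of the building blocks. The Dedekind eta is a modular form of weight $\tfrac12$ for $\SLNZ$ with a known $24$-th root of unity multiplier; the Klein form $\mf{k}_{(r_1,r_2)}$ is a meromorphic modular form of weight $-1$ transforming as
\[\mf{k}_{(r_1,r_2)}(\gamma\tau)=\varepsilon(\gamma,r)(c\tau+d)^{-1}\mf{k}_{(r_1,r_2)\gamma}(\tau),\]
where $(r_1,r_2)\gamma$ is the right $\SLNZ$-action on $\mathbb Q^2/\mathbb Z^2$ and $\varepsilon$ is an explicit root-of-unity cocycle. A direct weight tally for each monomial yields $\tfrac{15k-3k}{2}-(a+b)=6k-5k=k$ whenever $a+b=5k$. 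Since for $\gamma\in\Gamma(5)$ the pairs $(1/5,0)$ and $(2/5,0)$ are fixed modulo $\mathbb Z^2$, the combined Klein cocycle and eta multiplier reduce to a finite root of unity on $\Gamma(5)$, which I would then check to trivialise on a standard generating set.

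For (ii) and (iii), holomorphy on $\uh$ is immediate since $\eta$ has no zeros on $\uh$ and $\mf{k}_{(r_1,r_2)}(\tau)$ vanishes on $\uh$ only when $r_1\tau+r_2\in\mathbb Z\tau+\mathbb Z$, which fails for our nontrivial $r$. Holomorphy at the $12$ cusps of $\Gamma(5)$ reduces to a finite check of leading $q$-exponents via (\ref{Dedekind}) and (\ref{Klein}). Linear independence follows because $\mf{k}_{(1/5,0)}(5\tau)$ and $\mf{k}_{(2/5,0)}(5\tau)$ have distinct leading exponents at $\infty$, so the $5k+1$ monomials (all sharing the common prefactor $\eta(5\tau)^{15k}/\eta(\tau)^{3k}$) have pairwise distinct leading $q$-orders at the cusp $\infty$. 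Finally, $X(5)$ has genus $0$, no elliptic points, and $12$ regular cusps, so the standard dimension formula yields $\dim M_k(\Gamma(5))=(k-1)(g-1)+\tfrac{k}{2}\nu_\infty=5k+1$ for all $k\geq 1$, matching the count of monomials and hence establishing the direct sum.

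For the final assertion on $f$, specialise the general decomposition to the indicated parameters and compute the leading $q$-expansion of $f$ at $\infty$ directly from (\ref{Dedekind}) and (\ref{Klein}); the order at the cusp is then read off as the sum of leading exponents, and non-vanishing on $\uh$ is immediate from the non-vanishing of the ingredient functions there. The main technical obstacle is the multiplier trivialisation in step (i): tracking the multiplier of $\eta(5\tau)^{15k}/\eta(\tau)^{3k}$ together with the Klein-form cocycle through the substitution $\tau\mapsto 5\tau$ and verifying that the combined character is trivial on all of $\Gamma(5)$, rather than merely on a convenient subset of generators. Once this is in place, steps (ii)--(iii) and the specialisation to $f$ amount to careful bookkeeping.
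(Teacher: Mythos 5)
Your plan is essentially the standard classical argument, but note that the paper itself does not prove this theorem: it is quoted from Schultz's notes (Proposition 4.9.6 there), which in turn go back to Klein and Fricke, so you are supplying a proof where the paper supplies a citation. The route you take --- the weight tally $\tfrac{15k-3k}{2}-(a+b)=k$; trivialisation of the combined eta/Klein multiplier, which for integral weight is a genuine character $\Gamma(5)\to\mb{C}^\times$ and is therefore determined by its values on a generating set (your closing worry on this point is unnecessary, though the conjugation $\gamma\mapsto\begin{pmatrix}a&5b\\c/5&d\end{pmatrix}$ forced by the substitution $\tau\mapsto 5\tau$ is where the bookkeeping lives); holomorphy at the twelve cusps; linear independence from the pairwise distinct leading $q$-orders $-3k+a/5$ of the monomials; and the count $\dim M_k(\Gamma(5))=5k+1$ from Riemann--Roch on the genus-zero curve with twelve regular cusps and no elliptic points --- is exactly how the classical result is reconstructed, and each step is correct in outline. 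The only point needing an extra word is $k=1$, which the generic valence/dimension formula does not cover; for genus zero one still gets $\dim M_1(\Gamma(5))=\nu_\infty/2=6$, matching the six monomials.

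If you carry out your own final step you will, however, hit a discrepancy with the literal statement. The function $f$ in (\ref{f}) is the $(k,a,b)=(1,5,0)$ monomial of (\ref{Klein2}), hence a holomorphic form of weight $+1$; its expansion from (\ref{Dedekind}) and (\ref{Klein}) is $q^{25/8}\cdot q^{-1/8}\cdot q^{-2}(1+\cdots)=q(1+\cdots)$, a zero of order $5$ in the width-$5$ local parameter $q^{1/5}$ at $i\infty$, not a pole. Since a weight-one form for $\Gamma(5)$ has exactly $60/12=5$ zeros on $X(5)$ by the valence formula, all of them sit at $i\infty$ and $f$ is nonvanishing on $\uh$ and at the remaining cusps; the object that actually lies in $\mfk[\Gamma(5)]{-1}$ with a pole of order $5$ at infinity is $f^{-1}$, which is what Lemma \ref{lem:mf odd group} and Theorem \ref{thm:alias odd group} require. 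So state your conclusion for $f$ as a nonvanishing element of $M_1(\Gamma(5))$ and pass to $f^{-1}$ for the weight $-1$ claim; your computation is the one that exposes the slip.
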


The modular curve $Y(5)=\Gamma(5)\backslash\uh$ is a complex sphere punctured at the vertices of the icosahedron \cite{gunning1962lectures}, so as a corollary we have the isomorphism of the corresponding $\alia$ with the icosahedral loop algebra.

\section{Genus One Case: $\Gamma(6)$ and Markov Triples}
\label{sec:gamma6}
Let us consider now the subgroups $\Gamma$ of the modular group such that $X(\Gamma)$ is a smooth curve of genus 1.
In particular, the congruence subgroups 
$$
\Gamma_0(N):=\left\{\left( \begin{array}{cc}
a&b\\
c&d
\end{array}\right) \in \SLNZ[2]: c \equiv 0 \mod{N}\right\}
$$
with $$N=11, 14, 15, 17, 19, 20, 21, 27, 32, 36, 49$$ have this property.

Since any elliptic curve has holomorphic form $dz^{1/2}$ of weight $1$ without zeros, our Theorem \ref{thm:alias odd group} implies
\begin{Theorem}
\label{thm:alias genus 1 groups}
For any genus 1 finite index subgroup $\Gamma$ and its restricted representation $\rho$ we have a Lie algebra isomorphism
\[\alia[\Gamma]\cong \mf{g}\otimes_{\mathbb C} \mfk[\Gamma]{0}.\]
\end{Theorem}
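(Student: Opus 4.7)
The plan is to deduce the claim from Theorem~\ref{thm:alias odd group} when $-\Id\notin\Gamma$ and from Theorem~\ref{thm:alias even group} when $-\Id\in\Gamma$: in either case what must be produced is a single scalar modular form for $\Gamma$, of weight $1$ or weight $2$ respectively, that has no zero on $\uh$. The extra hypothesis $\rho(-\Id)=\id$ required by Theorem~\ref{thm:alias even group} is automatic in our setting, because $\Aut{\mf g}^0$ is the adjoint group of the simple Lie algebra $\mf g$ and is therefore centreless, so any group embedding $\bar{\rho}\colon\SLNC[2]\to\Aut{\mf g}^0$ must send the central element $-\Id$ to the identity.

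To produce a weight $2$ form without zeros on $\uh$, I would exploit that $X(\Gamma)$ is a compact Riemann surface of genus one, hence has trivial canonical bundle and carries a nowhere-vanishing holomorphic $1$-form $\omega$. Pulling $\omega$ back along the quotient map $\pi\colon\uh\to Y(\Gamma)$ and using $d(\gamma\tau)=(c\tau+d)^{-2}\,d\tau$, one obtains $\pi^{*}\omega=F_{2}(\tau)\,d\tau$ with $F_{2}$ a weight $2$ cusp form on $\Gamma$. At points of $\uh$ over which $\pi$ is unramified the non-vanishing of $\omega$ passes directly to $F_{2}$, and at any elliptic point one can first multiply $\omega$ by a meromorphic function on $X(\Gamma)$ whose divisor of zeros at the relevant image points exactly absorbs the ramification contribution of $\pi$, which is unobstructed by Riemann--Roch on an elliptic curve. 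The resulting $F_{2}$ is nowhere zero on $\uh$, so Theorem~\ref{thm:alias even group} settles the even case.

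For odd $\Gamma$ the task is to upgrade weight $2$ to weight $1$ by extracting a square root. Because $\uh$ is simply connected and $F_{2}$ has no zeros there, a single-valued holomorphic square root $F_{1}:=\sqrt{F_{2}}$ is well defined on all of $\uh$ --- this is the ``$dz^{1/2}$'' alluded to in the remark preceding the theorem. Squaring the modularity of $F_{2}$ yields $F_{1}(\gamma\tau)=\chi(\gamma)(c\tau+d)F_{1}(\tau)$ for some sign character $\chi\colon\Gamma\to\{\pm 1\}$, and the main obstacle --- really the only nontrivial step --- is to absorb $\chi$, since Theorem~\ref{thm:alias odd group} requires a genuine (untwisted) nowhere-vanishing element of $\mfk[\Gamma]{1}$. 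Concretely, I would construct a weakly holomorphic weight $0$ form $h$ on $\Gamma$ of character $\chi$ that has no zero on $\uh$, equivalently a nowhere-zero meromorphic function on the unramified double cover $Y(\ker\chi)\to Y(\Gamma)$ anti-invariant under the Galois involution; such an $h$ exists essentially because this cover is unramified and the $(-1)$-eigenspace of the involution on meromorphic functions is readily seen to be nonzero by Riemann--Roch on the genus one curve $X(\ker\chi)$. The product $hF_{1}\in\mfk[\Gamma]{1}$ is then nowhere zero on $\uh$, and Theorem~\ref{thm:alias odd group} delivers the desired isomorphism $\alia[\Gamma]\cong\mf g\otimes_{\mathbb C}\mfk[\Gamma]{0}$.
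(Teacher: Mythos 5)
Your route is the same as the paper's: the published proof consists of the single observation that the genus-one curve $X(\Gamma)$ carries a nowhere-vanishing holomorphic differential whose ``square root'' is a weight-$1$ form without zeros on $\uh$, after which Theorem \ref{thm:alias odd group} applies. You have correctly spotted the two points this sketch glosses over (elliptic points of $\Gamma$, and the sign character picked up by the square root), but the repairs you propose do not work.

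The elliptic-point step is genuinely wrong. If $\tau_0\in\uh$ is an elliptic point of order $e\ge 2$ lying over $p\in X(\Gamma)$, then in local coordinates $w=t^e$ the pullback of a $1$-form $g(w)\,\rd w$ with $g(p)\neq0$ is $g(t^e)\,e\,t^{e-1}\,\rd t$, so $F_2$ vanishes to order $e-1$ at $\tau_0$; multiplying $\omega$ by a meromorphic function $h$ on $X(\Gamma)$ changes this order by $e\cdot\mathrm{ord}_p h$, a multiple of $e$, and since $e-1\not\equiv 0\pmod e$ no choice of $h$ makes the order zero. Indeed every meromorphic weight-$2$ form for $\Gamma$ has order $\equiv e-1\pmod e$ there, so for genus-one groups with elliptic points (the paper's own list contains $\Gamma_0(17)$ and $\Gamma_0(19)$, which have them) the hypothesis of Theorem \ref{thm:alias even group} cannot be met at all; this gap is already present in the paper's one-line argument, but your fix does not close it. Second, the claim that $\rho(-\Id)=\id$ is automatic because $\Aut{\mf{g}}^0$ is centreless is false: $\bar\rho(-\Id)=\exp(\pi i H)$ acts on the graded piece $\mf{g}_k$ by $(-1)^k$ and is a nontrivial involution whenever the induced grading has odd components (e.g.\ the minimal nilpotent embedding of $\slnc[2]$ into $\slnc[3]$, where $H=\ad\,\diag(1,0,-1)$ has eigenvalues $\pm1$); centrelessness constrains the centre of the whole group, not elements that merely centralise the image of $\SLNC[2]$ — the paper explicitly allows $\rho(-\Id)\neq\id$ and treats it by passing to $\mf{g}^{\rho(\pm\Id)}$. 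Finally, the character-absorption step is the right idea, but its key ingredient — a weight-$0$ weakly holomorphic form with character $\chi$ and no zeros on $\uh$, i.e.\ an anti-invariant function on the double cover whose divisor is supported at the cusps — is asserted rather than proved; Riemann--Roch yields nonzero anti-invariant functions, not ones whose zeros avoid $\uh$.
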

In particular, this is true for $\Gamma(6)$, which is the only principal congruence subgroup with genus 1 \cite{gunning1962lectures}. We consider now this case in more detail.

The corresponding quotient $Y(6)=\Gamma(6)\backslash\uh$ has $12$ cusps and topologically is a torus with 12 punctures \cite{gunning1962lectures}.
The group $$P\Gamma(1)/P\Gamma(6)\cong S_3 \times A_4$$ acting on $Y(6)$ has order $72$ \cite{rankin1977modular}. The orbit of cusps therefore has stabiliser of order $6$. There is only one complex torus (known as {\it equianharmonic}) which has  such a symmetry, which is $$X(6)\cong \mb{C}/\mb{Z}\oplus\mb{Z}e^{2\pi i/3}.$$ In the Weierstrass form 
the corresponding elliptic curve corresponds to the case with $g_2=0.$ 

The corresponding set of punctures shown on Fig. 1 is an orbit of the group of translations $\mathbb Z_2 \times \mathbb Z_6$ generated by
$$z \to z+\frac{1}{2}, \,\, \, z \to z+\frac{1}{3}+\frac{1}{6}e^{2\pi i/3},$$ so that $$P\Gamma(1)/P\Gamma(6)\cong \mathbb Z_6\ltimes (\mathbb Z_2 \times \mathbb Z_6)$$ can be represented also as a semi-direct product  of this group with $\mathbb Z_6$ generated by $z\to e^{\pi i/3} z$.

\begin{figure}[h]
\centering
  \includegraphics[width=70mm]{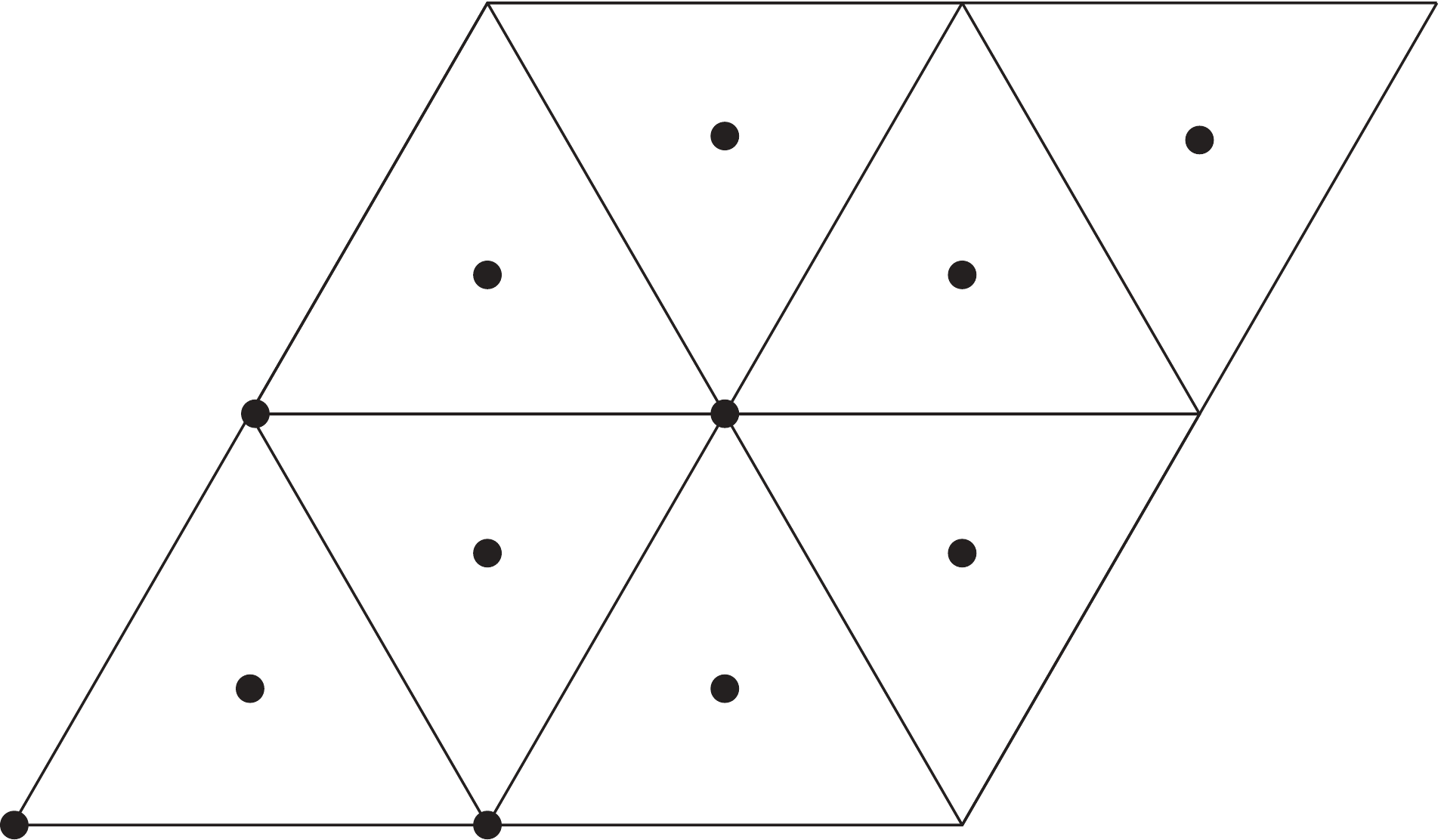}  
  \caption{12 punctures of $Y(6)$}
\end{figure}

There is another important closely related congruence subgroup, namely the commutator of the modular group $\Gamma'=\Gamma(1)'$.
It is known  that $\Gamma'$ is a free group generated by the matrices
$$
U=\begin{pmatrix}
2&1\\
1&1
\end{pmatrix}, \quad V=\begin{pmatrix}
1&1\\
1&2
\end{pmatrix}
$$
and contains $\Gamma(6)$ as a subgroup of index 12 \cite{newman1962the, rankin1977modular}.

The corresponding quotient $Y(\Gamma')=\Gamma'\backslash\uh$ is a one-punctured equianharmonic torus, which is 12-covered by $Y(6)$:
$$
Y(\Gamma')=G\backslash Y(6), \quad G=\Gamma'/\Gamma(6).
$$
This particular one-punctured torus turned out to be closely related to the celebrated Markov triples satisfying the Diophantine equation
$$
x^2+y^2+z^2=3xyz.
$$
There is a natural $PGL_2(\mathbb Z)$-action on the solutions of this equation generated by permutation group $S_3$ and Vieta involution
$$
(x,y,z) \to (x,y, 3xy-z).
$$
Markov \cite{markoff1880sur} showed that set of all positive integer solutions of this equation is just one $PGL_2(\mathbb Z)$-orbit of the solution $(1,1,1):$
$$
(1,1,1), (1,1,2), (1,2,5), (1,5,13), (2,5,29), \dots.
$$
The elements of Markov triples are known as {\it Markov numbers}:
$$
1, 2, 5, 13, 29, 34, 89, 169, 194, 233, 433, 610, 985, \dots.
$$
A conjecture due to Frobenius (see Aigner \cite{aigner2013markov}) claims that the maximal element $z$ of Markov triple $(x,y,z), \, x\le y \le z$ uniquely determines the triple, so there are as many Markov triples as Markov numbers.

Markov triples originally appeared in number theory, but in the 1950s Gorshkov and Cohn \cite{cohn1955approach, gorshkov1981geometry} independently discovered the following remarkable relation with the hyperbolic geometry of the Markov torus (see more details in \cite{aigner2013markov, haas1986diophantine, series1985the} and for the most recent proof \cite{springborn2017the}).

\begin{Theorem} (Gorshkov, Cohn)
Markov numbers can be interpreted as $\frac{2}{3} \cosh l$ of the lengths $l$ of the corresponding simple closed geodesics on Markov one-punctured torus.
\end{Theorem}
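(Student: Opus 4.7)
The plan is to reduce the statement to trace arithmetic in the free group $\Gamma' = F(U,V) \subset \mathrm{SL}(2,\mb{Z})$ uniformising the Markov torus $Y(\Gamma')$, and then to invoke Fricke's trace identity, exploiting the fact that the puncture is represented by the commutator $[U,V]$. Throughout, for a hyperbolic element $\gamma \in \mathrm{SL}(2,\mb{R})$ one uses the standard relation $|\mathrm{tr}(\gamma)| = 2\cosh(l/2)$ between trace and the length $l$ of the associated closed geodesic on $\Gamma'\backslash \uh$; since $-\mathrm{Id}\notin\Gamma'$ (the abelianisation of $\mathrm{SL}(2,\mb{Z})$ is cyclic of order $12$ and $-\mathrm{Id}$ projects nontrivially), $\Gamma'$ injects into $\mathrm{PSL}(2,\mb{R})$ and acts freely.

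The first step is to relate simple closed geodesics to bases of $\Gamma'$. On the once-punctured torus $Y(\Gamma')$, isotopy classes of essential simple closed curves are in bijection with the $\mathrm{Out}(F_2)$-orbit of a generator of $\pi_1 \cong F_2$, equivalently with primitive conjugacy classes in $\Gamma'$ that can be extended to a free basis; each such class contains a unique closed geodesic representative. Since $(U,V)$ is a basis and $[U,V]$ encircles the puncture, every simple closed geodesic is represented by an element $A$ fitting in some basis $(A,B)$ of $\Gamma'$, and conversely.

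Next, for any such basis the commutator $[A,B]$ is conjugate to $[U,V]$ and is parabolic with trace $-2$ (checked once by direct matrix computation on $(U,V)$). Fricke's trace identity
\[
\mathrm{tr}(A)^2 + \mathrm{tr}(B)^2 + \mathrm{tr}(AB)^2 - \mathrm{tr}(A)\,\mathrm{tr}(B)\,\mathrm{tr}(AB) \;=\; \mathrm{tr}[A,B] + 2
\]
then collapses to $\mathrm{tr}(A)^2 + \mathrm{tr}(B)^2 + \mathrm{tr}(AB)^2 = \mathrm{tr}(A)\mathrm{tr}(B)\mathrm{tr}(AB)$, and the substitution $x = \mathrm{tr}(A)/3$, $y = \mathrm{tr}(B)/3$, $z = \mathrm{tr}(AB)/3$ yields the Markov equation $x^2 + y^2 + z^2 = 3xyz$. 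The basis $(U,V)$ gives the base triple $(1,1,1)$, since $\mathrm{tr}(U) = \mathrm{tr}(V) = \mathrm{tr}(UV) = 3$. Nielsen's elementary transformations on bases of $F_2$ act on the scaled trace triple as permutations together with the Vieta involution $z \mapsto 3xy - z$ (the other root of the quadratic in $z$), reproducing the full $\mathrm{PGL}(2,\mb{Z})$-action. By Markov's theorem, every positive integer solution --- i.e.\ every Markov number --- is thereby realised, and $m = \tfrac{2}{3}\cosh(l/2)$ follows from $3m = \mathrm{tr}(\gamma) = 2\cosh(l/2)$, the factor of two between $\cosh l$ and $\cosh(l/2)$ being absorbed into the convention for measuring $l$ on the Markov torus.

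The main obstacle is the topological input of the first step: showing that simple closed curves on the one-holed torus correspond exactly to primitive basis elements of $F_2$ up to the mapping class group action, together with transitivity of that action on bases. Both facts are classical results of Nielsen but require genuine topological argument, whereas the remaining steps amount to one application of Fricke's identity, a single commutator trace computation, and the already-cited theorem of Markov on the structure of positive integer solutions to $x^2+y^2+z^2 = 3xyz$.
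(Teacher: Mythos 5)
The first thing to say is that the paper offers no proof of this statement: it is quoted as a classical theorem of Gorshkov and Cohn, with the reader referred to Aigner, Haas, Series and Springborn for details. So there is no internal argument to compare yours against; what you have written is essentially the standard proof found in those references (Cohn's route through the commutator subgroup and Fricke's trace identity), and its architecture is sound --- identify non-peripheral simple closed geodesics on $Y(\Gamma')$ with conjugacy classes of basis elements of the free group $\Gamma'=\langle U,V\rangle$, note that the peripheral commutator has trace $-2$ for every basis, feed this into the Fricke identity so that after dividing traces by $3$ one lands on $x^2+y^2+z^2=3xyz$, and match Nielsen moves with the Vieta involution and permutations.

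Two corrections are needed. First, a computation: with $U=\left(\begin{smallmatrix}2&1\\1&1\end{smallmatrix}\right)$ and $V=\left(\begin{smallmatrix}1&1\\1&2\end{smallmatrix}\right)$ one has $\mathrm{tr}(UV)=6$, not $3$, so the basis $(U,V)$ yields the triple $(1,1,2)$; the fundamental triple $(1,1,1)$ comes instead from $(U,V^{-1})$, for which $\mathrm{tr}(UV^{-1})=3$. This does not damage the argument, since both triples lie on the single Markov orbit and the Fricke identity checks out either way ($9+9+36=3\cdot3\cdot6\cdot3$ scaled down, and $9+9+9=27$), but as written the anchor of your induction is off by one Vieta move. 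Second, the normalisation: your argument proves $m=\tfrac{2}{3}\cosh(l/2)$ with $l$ the hyperbolic length, which is the form found in Springborn; the statement's $\tfrac{2}{3}\cosh l$ only agrees under a half-length convention, and rather than silently ``absorbing'' the factor you should say explicitly which convention is in force. Finally, be aware that the topological inputs you defer to Nielsen --- the bijection between essential simple closed curves on the once-punctured torus and conjugacy classes of basis elements of $F_2$, and the invariance of the conjugacy class of $[U,V]^{\pm1}$ under $\mathrm{Aut}(F_2)$ --- are where the real weight of the theorem sits; a complete write-up must include them, together with the positivity and $3$-divisibility of traces of simple elements (which follow by induction along the tree from $(3,3,3)$).
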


More recently Markov triples appeared in many other interesting links with algebraic geometry, theory of Frobenius manifolds and quantum cohomology, see the references in the review \cite{bukhshtaber2019conway}.

As a corollary to Theorem \ref{thm:alias genus 1 groups}, we have
\begin{Theorem}
\label{thm:alias Markov}
For the commutator $\Gamma'$ of the modular group we have a Lie algebra isomorphism
\[\alia[\Gamma']\cong \mf{g}\otimes_{\mathbb C} \mfk[\Gamma']{0}.\]
The algebra $\mfk[\Gamma']{0}\cong\mathbb C[x,y]$ with $x,y$ satisfying the relation
$$
y^2=4x^3-1,
$$
where $x=\wp(z), \,\ y=\wp'(z)$ are the Weierstrass functions on the corresponding equianharmonic elliptic curve.
\end{Theorem}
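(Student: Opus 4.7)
The plan is to split the two assertions and handle them independently. The Lie algebra isomorphism is essentially a direct corollary of Theorem \ref{thm:alias genus 1 groups}: since $\Gamma'$ has finite index in $\SLNZ[2]$ and, as recalled in the paragraph preceding the theorem, the modular curve $X(\Gamma')$ is the compactification of the one-punctured equianharmonic torus $\mb{C}/(\mb{Z}\oplus\mb{Z}e^{2\pi i/3})$ and hence has genus one, Theorem \ref{thm:alias genus 1 groups} immediately yields $\alia[\Gamma']\cong \mf{g}\otimes_{\mathbb C}\mfk[\Gamma']{0}$. No new ingredients are needed here.

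For the description of $\mfk[\Gamma']{0}$, the key point is that the algebra of weight zero weakly holomorphic modular forms coincides with the ring of meromorphic functions on the compact Riemann surface $X(\Gamma')$ that are holomorphic on $Y(\Gamma')$, i.e.\ with only possible poles at the unique cusp. I would first verify that $\Gamma'\backslash\overline{\uh}$ has a single cusp: this follows from the identification $Y(\Gamma')=G\backslash Y(6)$ with $|G|=12$, together with the fact that $Y(6)$ has exactly $12$ cusps permuted transitively by $G$. Thus $X(\Gamma')$ is an equianharmonic elliptic curve together with one distinguished point, which we may take to be $z=0$ under the identification $X(\Gamma')\cong \mb{C}/(\mb{Z}\oplus\mb{Z}e^{2\pi i/3})$.

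The next step is a classical application of Riemann--Roch on an elliptic curve. For the puncture at $z=0$, the Weierstrass function $\wp(z)$ is an element of $L(2\cdot 0)$ of pole order exactly two, and $\wp'(z)\in L(3\cdot 0)$ of pole order exactly three, so these two functions generate the graded vector space $\bigoplus_n L(n\cdot 0)$. This identifies the ring of meromorphic functions with poles only at $z=0$ with the affine coordinate ring $\mb{C}[\wp,\wp']$, and the only relation is the Weierstrass equation $(\wp')^2=4\wp^3-g_2\wp-g_3$. For the equianharmonic lattice one has $g_2=0$, while $g_3\neq 0$, and after rescaling the uniformising coordinate $z$ we may normalise $g_3=1$, obtaining the stated presentation $y^2=4x^3-1$ with $x=\wp(z)$, $y=\wp'(z)$.

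The main obstacle I expect is not the classical elliptic function theory, which is completely standard, but the geometric bookkeeping around the cusps: verifying that the twelve punctures of $Y(6)$ form a single $G$-orbit so that $Y(\Gamma')$ has exactly one cusp, and checking that the $\Gamma'$-invariant isomorphism class of the elliptic curve is indeed equianharmonic (so $g_2=0$). Both facts are alluded to in the discussion preceding the theorem, so the argument should be essentially a careful assembly of those ingredients together with Theorem \ref{thm:alias genus 1 groups} and Riemann--Roch on the resulting elliptic curve.
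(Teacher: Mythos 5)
Your proposal is correct and follows essentially the same route as the paper, which derives the theorem as a direct corollary of Theorem \ref{thm:alias genus 1 groups} combined with the preceding identification of $Y(\Gamma')$ as a one-punctured equianharmonic torus; the paper leaves the elliptic-function step (that functions with poles only at the single cusp form $\mathbb C[\wp,\wp']$ with the Weierstrass relation, $g_2=0$ by the order-six symmetry, and $g_3$ normalised to $1$ by rescaling) implicit, which you have simply spelled out via Riemann--Roch.
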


Since $P(\Gamma(1)/\Gamma')$ is the cyclic group of order 6, $Y(\Gamma')$ is a 6 times cover of the modular curve $\Gamma(1)\backslash\uh$. The holomorphic form $dz^6$ on $Y(\Gamma')$ gives rise to the classical weight 12 modular form $$\Delta=\frac{E_4^3-E_6^2}{1728}=q\prod_{n=1}^\infty(1-q^n)^{24},$$ which provides a nice geometric description of this form.

Note that $\Gamma'$ is a congruence subgroup (since it contains $\Gamma(6)$), but it is not a principal congruence subgroup of the modular group.

The list of all the subgroups or $PSL(2,\mathbb Z)$ of index up to 7 can be found in Schultz's lectures \cite{schultz2015notes}. Apart from one exception, all of them have genus 0 with the Hauptmodul given by an explicit algebraic relation with the absolute invariant $j$ (see Table 4.1 in \cite{schultz2015notes}). The corresponding automorphic Lie algebras are worthy of studying in more details.

\section{Extensions and Representations}
\label{sec:extensions}

Recall that the loop algebra $\mathcal L(\mathfrak g)=\mathfrak g\otimes_{\mathbb C}\mathbb C[z, z^{-1}] $ of a simple Lie algebra $\mathfrak g$ has the following central extension
$\hat{\mathcal L}(\mathfrak g)=\mathcal L(\mathfrak g) \oplus \mathbb C c$
defined by the 2-cocycle
\beq{cocy}
\omega(f(z)x, g(z)y)=\frac{1}{2\pi i}  \oint_C f dg  \,K(x,y), \quad f,g \in \mathbb C[z, z^{-1}], \, x,y \in \mathfrak g,
\eeq
where $C$ is a contour surrounding $z=0$ and $$K(x,y):= \tr \, \ad x\,  \ad y$$ is the Killing form on $\mathfrak g:$
$$
[x z^m, y z^n]:=[x,y] z^{m+n}+mK(x,y)\delta_{m+n,0} c.
$$ 
It is known that the central extensions of Lie algebra $\mathcal L(\mathfrak g)$ are classified by the second cohomology group $H^2(\mathcal L(\mathfrak g), \mathbb C).$
The above extension is universal since this cocycle generates $H^2(\mathcal L(\mathfrak g), \mathbb C)$, which is one-dimensional (see e.g. \cite{kac1990infinite}).

The universal central extensions of its natural generalisations 
$$
\mathcal L_\np(\mathfrak g)=\mathfrak g\otimes_{\mathbb C}\mathbb C[z, (z-a_1)^{-1}, \, (z-a_{\np-1})^{-1}]
$$
called $\np$-point loop algebras, were studied by Bremner \cite{bremner1994universal}, 
who proved that the corresponding $H^2(\mathcal L_\np, \mathbb C)$ is ($\np$-1)-dimensional with a basis given by the cocycles
\beq{cocy2}
\omega_k(f(z)x, g(z)y)=\frac{1}{2\pi i}  K(x,y) \oint_{C_k} f dg, \,\,\, k=1,\dots, \np-1,
\eeq
where $C_k$ is a small contour surrounding $z=a_k.$

Let $\rg$ be a finite index subgroup of $\SLNZ[2]$ and $\mathfrak g$ be a simple Lie algebra, and assume that $\mfpk{1}$  contains an element that is never zero on $\uh$. In that case by our Theorem \ref{thm:alias odd group} we have an isomorphism $\alia\cong \mf{g}\otimes_{\mathbb C} \mfk{0}$
 and can define the cocycles
\beq{cocy3}
\omega_C(x\otimes f, y\otimes g)=\frac{1}{2\pi i}  K(x,y) \oint_C f dg, \quad f,g \in  \mfk{0}, \, x,y \in \mathfrak g
\eeq
for any contour $C \in H_1(Y(\Gamma), \mathbb Z)$. Bremner's results imply then that these cocycles generate all central extensions of the zero weight automorphic Lie algebra $\alia$ with smooth $X(\Gamma)$ of any genus.

In particular, this is true for  the principal congruence subgroups $\Gamma=\Gamma(N), \, N=3,4,5,6$ and the commutator subgroup of $\Gamma(1)$. The same is true for $\Gamma(2)$ with even representation, but for odd representations the answer depends on the number of invariant bilinear forms on 
$\mf{g}^{\rho(\pm\Id)}$ (see the proof of Theorem \ref{thm:alias even group}).

Note that the affine Kac-Moody algebras are the affine Lie algebras with added derivation in the usual case being $d=z\frac{d}{dz}$ (see \cite{kac1990infinite}). 
In general the choice of the derivation (vector field) is not obvious (see a relevant paper \cite{sheinman1990elliptic} by Sheinman, who studied this question for the Krichever-Novikov algebras in the genus 1 case).

Note that any derivation $\delta$ of a Lie algebra $\mathcal L$ defines the one-dimensional extension $\tilde{\mathcal L}=\mathcal L \oplus \mathbb C\delta,$ using the formula
$$
[x+\lambda \delta, y+\mu \delta]=[x,y]+\lambda \delta(y)-\mu \delta(x), \quad x,y \in \mathcal L.
$$
The following simple result provides an explicit example of such derivation in the modular case.

\begin{proposition}
For any finite index subgroup $\Gamma$ of modular group and any representation $\rho: \Gamma \to \Aut {\mathfrak g}$ the formula
\beq{der}
\delta = \frac{1}{2\pi i}\frac{E_4 E_6}{\Delta} \frac{\rd}{\rd\tau}
=(1-240 q- 141444 q^2 - 8529280 q^3 - 238758390 q^4-\ldots)\frac{\rd}{\rd q}
\eeq
defines a derivation of the corresponding $\alia$, which is the only weakly holomorphic derivation with the behaviour at the cusp $\delta=(1+o(q))\frac{d}{dq}$. On the absolute modular invariant $j$ it acts as
\beq{reld}
\delta (j)=-j(j-1728).
\eeq
\end{proposition}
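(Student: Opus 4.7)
The plan is to verify four things: that $\delta$ maps $\alia$ into itself, that it satisfies the Leibniz identity, the uniqueness characterisation, and the explicit identity $\delta(j)=-j(j-1728)$.

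\textbf{Well-definedness.} First I would observe that $E_4E_6/\Delta$ is a weakly holomorphic modular form of weight $-2$ for $\SL(2,\mb Z)$: the weights add to $4+6-12=-2$, it is holomorphic on $\uh$ because $\Delta$ has no zeros there, and it has exponential growth at the cusp since $\Delta^{-1}$ does. Restriction to any finite-index $\Gamma$ preserves these properties. For $f\in\alia$ the equivariance $f(\gamma\tau)=\rho(\gamma)f(\tau)$ differentiates to $(c\tau+d)^{-2}f'(\gamma\tau)=\rho(\gamma)f'(\tau)$, so $(2\pi i)^{-1}f'$ lies in $\aliak{2}$; multiplying by a scalar weakly holomorphic form of weight $-2$ returns an element of $\alia$.

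\textbf{Leibniz.} The bracket on $\alia$ is pointwise and $d/d\tau$ acts coefficientwise on $\mf g$-valued functions, commuting with the bilinear bracket on $\mf g$, so the ordinary product rule immediately gives the Leibniz identity for $\delta$.

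\textbf{Action on $j$.} Writing $D=(2\pi i)^{-1}d/d\tau$ and using Ramanujan's identities $DE_4=(E_2E_4-E_6)/3$ and $D\Delta=E_2\Delta$, a direct computation with $j=E_4^3/\Delta$ yields
\[
Dj=\frac{3E_4^2\cdot(E_2E_4-E_6)/3\cdot\Delta-E_4^3\cdot E_2\Delta}{\Delta^2}=-\frac{E_4^2E_6}{\Delta},
\]
the $E_2$ terms cancelling. Multiplying by $E_4E_6/\Delta$ gives $\delta(j)=-E_4^3E_6^2/\Delta^2$, and since $j-1728=E_6^2/\Delta$ this equals $-j(j-1728)$. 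The $q$-expansion stated in (\ref{der}) then follows from expanding $E_4$, $E_6$ and $\Delta$ in powers of $q$ and carrying out the division.

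\textbf{Uniqueness.} A weakly holomorphic derivation $\delta'$ of $\alia$ restricts to a derivation of the scalar subring $\mfk{0}\cdot\mathrm{Id}\subset\alia$, and hence is of the form $h(\tau)d/d\tau$ on scalars with $h\in\mfk[\Gamma]{-2}$. The normalisation $\delta'=(1+o(q))d/dq$ at $i\infty$, equivalent to $h=(2\pi i q)^{-1}+o(q^{-1})$, fixes the principal part of $h$ at $i\infty$ to agree with that of $E_4E_6/(2\pi i\Delta)$; the difference is then a weight $-2$ weakly holomorphic form for $\Gamma$ holomorphic at $i\infty$, and together with the growth condition matching $E_4E_6/\Delta$ at the remaining cusps it becomes a holomorphic weight $-2$ modular form, which vanishes because no nonzero holomorphic modular form of negative weight exists on a Fuchsian group of the first kind.

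I expect the uniqueness step to be the main obstacle: the other three parts are immediate consequences of the modular transformation law and Ramanujan's identities, whereas pinning down uniqueness requires a careful reduction from derivations of the Lie algebra $\alia$ to derivations of the scalar algebra of invariants $\mfk{0}$ together with the vanishing theorem for holomorphic negative-weight modular forms.
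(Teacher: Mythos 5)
Your verification that $\delta$ is a derivation and your computation of $\delta(j)$ are correct and coincide with what the paper actually does: its entire justification consists of the remark that $a(\tau)\,\frac{\rd}{\rd\tau}$ is a derivation of $\alia$ for any weight $-2$ weakly holomorphic modular form $a$ of $\Gamma$ (your first two steps), plus the observation that (\ref{reld}) follows from Ramanujan's relations (\ref{ram1}) (your third step, carried out correctly via $Dj=-E_4^2E_6/\Delta$ and $j-1728=E_6^2/\Delta$). The uniqueness claim is asserted in the paper without any proof, so your fourth step goes beyond the source — but it is also where the gaps are.

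Two points there are genuinely problematic. First, the reduction to scalars via ``the scalar subring $\mfk{0}\cdot\mathrm{Id}\subset\alia$'' does not make sense: $\mathfrak g$ is simple, hence has no identity element, and $\alia$ contains no copy of $\mfk{0}$ as a subring, so an abstract Lie-algebra derivation cannot be ``restricted to scalars'' this way. The tenable reading (supported by the sentence following the proposition) is that a weakly holomorphic derivation is \emph{by definition} one of the form $h(\tau)\frac{\rd}{\rd\tau}$ with $h\in\mfk{-2}$, which makes your reduction step unnecessary; as an argument about arbitrary derivations it would fail. Second, your conclusion needs control of $h$ at \emph{every} cusp, and you quietly import ``the growth condition matching $E_4E_6/\Delta$ at the remaining cusps,'' which is not part of the stated hypothesis. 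For $\Gamma=\Gamma(1)$ there is a single cusp and your argument closes correctly (the difference is a holomorphic form of weight $-2$, hence zero); but for a group with several cusps $\mfk{-2}$ contains forms vanishing to arbitrarily high order at $i\infty$ while having poles at the other cusps, so the normalisation at one cusp alone cannot single out $\delta$, and the hypothesis has to be read as a condition at all cusps for the statement to hold. (A minor point in your favour: the paper's normalisation should really be read as $(1+O(q))\frac{\rd}{\rd q}$ rather than $(1+o(q))\frac{\rd}{\rd q}$, since $\delta$ itself expands as $(1-240q-\cdots)\frac{\rd}{\rd q}$; your interpretation via the principal part of $h$ is the correct one.)
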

In general, $a(\tau) \frac{\rd}{\rd\tau}$ with any weight $-2$ weakly holomorphic modular form $a(\tau)$ of $\Gamma$ defines a derivation of $\alia$, but $\delta$ has particularly nice properties. 
It appeared in \cite{gannon2014the} as an element of a family of derivation $\delta=\nabla_{1,0}$.
Formula \ref{reld} follows from Ramanujan's relations (\ref{ram1}).

Let us discuss now the representations of our automorphic Lie algebras. 

Consider the Lie algebra $\mathcal L(\mathfrak g, \Gamma)=\mf{g} \otimes_\mathbb C \mfk{0}$ and the following {\it evaluation representations} of $\mathcal L(\mathfrak g, \Gamma)$, extending the well-known construction for the loop algebra
$\mathcal L(\mathfrak g)=\mathfrak g\otimes \mathbb C[z, z^{-1}]$ (see e.g. \cite{chari1986integrable, chari1988integrable}).

Choose a tuple of points $a=(a_1, \dots, a_n), \, a_i \in Y(\Gamma)$ and a tuple $\psi=(\psi_1,\dots, \psi_n)$ of representations $\psi_i: \mathfrak g \to \End (V_i)$. 
The corresponding evaluation representation
$$ev_{a,\psi}: \mathcal L(\mathfrak g) \to \End(\bigotimes_{i=1}^n V_i)$$ 
is defined as
\beq{eval}
ev_{a,\psi}: f(z) \mapsto \sum_{i=1}^n \Id\otimes \dots \otimes \Id \otimes \psi_i(f(a_i))\otimes \Id \otimes \dots \otimes \Id.
\eeq
where $f(z) \in \mathcal L(\mathfrak g, \Gamma)$ is considered as a $\mathfrak g$-valued function on $Y(\Gamma).$

In the theory of the equivariant map algebras there are deep results by Lau \cite{lau2010representations} and Neher et al.~\cite{neher2012irreducible}, implying in particular that when $Y(\Gamma)$ is smooth (which means that $\Gamma$ has no elliptic elements) then all irreducible finite-dimensional representations of the Lie algebra $\mathcal L(\mathfrak g, \Gamma)$ are
evaluation representations (for the Onsager Lie algebra a similar result was proved earlier by Roan \cite{roan1991onsager}).

Combining this with our theorems \ref{thm:alias odd group}, \ref{thm:alias even group} we have in particular the following result.

\begin{Theorem}
\label{thm:reps}
Let $\mathfrak g$ be a complex perfect Lie algebra, $\rg\subset\SLNZ[2]$ be any finite index subgroup $\rg\subset\SLNZ[2]$ with smooth $Y(\Gamma)$ and $\rho:\rg\to\Aut{\mf{g}}$ be its restricted representation. Under assumptions of theorems \ref{thm:alias odd group}, \ref{thm:alias even group} all the irreducible representations of the automorphic Lie algebra $\alia$ have the form $ev_{a,\psi}\circ \Psi$, where $$
\Psi: \alia\cong \mf{g}\otimes_{\mathbb C} \mfk{0}.
$$ is the isomorphism (\ref{Psi0}),(\ref{Psi}).

In particular, this is true for the principal congruence subgroups $\Gamma(3k)$, $\Gamma(4k)$ and $\Gamma(5k)$, $k\in\mb{N}$ and any restricted representation $\rho.$
\end{Theorem}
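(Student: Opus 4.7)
The plan is to combine our structural result (Theorems \ref{thm:alias odd group} and \ref{thm:alias even group}) with the general classification of irreducible finite-dimensional representations of current/equivariant map algebras due to Lau \cite{lau2010representations} and Neher et al.~\cite{neher2012irreducible}, cited in the paragraph preceding the statement.

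First, by the hypothesis that $\rg$ and $\rho$ satisfy the assumptions of either Theorem \ref{thm:alias odd group} or Theorem \ref{thm:alias even group}, we have at our disposal a Lie algebra isomorphism
\[
\Psi: \alia \xrightarrow{\sim} \mf{g}\otimes_{\mathbb C} \mfk{0}.
\]
It therefore suffices to classify the irreducible finite-dimensional representations of $\mf{g}\otimes_{\mathbb C} \mfk{0}$ and transport the answer along $\Psi$.

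Next, I would identify $\mfk{0}$ with the coordinate ring $\mc O(Y(\rg))$ of the open modular curve $Y(\rg)=\rg\backslash\uh$. The ring $\mfk{0}$ consists precisely of the $\rg$-invariant meromorphic functions on $\uh$ that are holomorphic on $\uh$ and have at worst poles at the cusps, i.e.\ the functions on $X(\rg)$ regular away from the finite set of cusps; this is the coordinate ring of $Y(\rg)$ as an affine algebraic curve. The smoothness hypothesis on $Y(\rg)$ (equivalent to $\rg$ being torsion-free, which is automatic for $\rg(N)$ with $N\ge 2$, and in particular for $N=3k,4k,5k$) ensures that $\mc O(Y(\rg))$ is the coordinate ring of a smooth affine curve, so that $\mf{g}\otimes_{\mathbb C}\mfk{0}$ is a current algebra of the type covered by \cite{lau2010representations, neher2012irreducible}. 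Since $\mf g$ is perfect, those theorems state that every irreducible finite-dimensional representation of $\mf g\otimes_{\mathbb C}\mc O(Y(\rg))$ is an evaluation representation of the form (\ref{eval}) at some tuple of points $a=(a_1,\ldots,a_n)$ in $Y(\rg)$ with some tuple of irreducible $\mf g$-representations $\psi=(\psi_1,\ldots,\psi_n)$. Composing with $\Psi$ yields the claimed description $ev_{a,\psi}\circ\Psi$.

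Finally, for the "in particular" part I would note that for $\rg=\rg(3k),\rg(4k),\rg(5k)$ the earlier theorem on nonvanishing weight-one forms guarantees that the hypothesis of Theorem \ref{thm:alias odd group} holds, while $\rg(N)$ with $N\ge 2$ is torsion-free, so $Y(\rg)$ is smooth and the preceding argument applies to every restricted representation $\rho$.

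The main obstacle is not really mathematical but referential: one must check that the setting of \cite{lau2010representations, neher2012irreducible}, which allows an equivariant map algebra over a general scheme with a finite-group action, specialises cleanly to our situation of trivial group action on the smooth affine curve $Y(\rg)$ and a perfect (rather than semisimple) target Lie algebra. Once this is verified, the rest of the argument is a straightforward combination of our isomorphism $\Psi$ with their classification.
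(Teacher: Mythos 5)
Your proposal is correct and follows essentially the same route as the paper: the paper's own argument is precisely to transport the problem along the isomorphism $\Psi$ of Theorems \ref{thm:alias odd group} and \ref{thm:alias even group} to $\mf{g}\otimes_{\mathbb C}\mfk{0}$, viewed as a map algebra on the smooth affine curve $Y(\Gamma)$ with \emph{trivial} group action, and then invoke the classification of Lau and Neher--Savage--Senesi (perfectness of $\mf g$ ruling out one-dimensional constituents). The referential caveat you raise at the end is exactly the point the paper itself makes immediately after the theorem.
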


In the theory of the equivariant map algebras \cite{lau2010representations}, \cite{neher2012irreducible} the group $\Gamma$ is assumed to be finite, so we cannot use their powerful results directly. To prove theorem \ref{thm:reps} we first use our isomorphism $\Psi$ and then apply the results of \cite{lau2010representations},\cite{neher2012irreducible} with trivial group acting on $Y(\Gamma).$

We can extend this using the results of Lau \cite{lau2014representations}, who introduced the notion of the twisted current algebras as a generalisation of the equivariant map algebras.
Let $\mf g$ be a simple complex Lie algebra and $A$ be a reduced finitely generated algebra over $\mathbb C$, $G$ be a finite group of automorphisms of the Lie algebra $\mf g\otimes_{\mathbb C} A,$ then the corresponding twisted current algebra can be defined as the fixed point subalgebra $(\mf g\otimes_{\mathbb C} A)^G.$ 

\begin{Theorem}
Let $\mf g$ be a semisimple Lie algebra, $\rg$ be a finite index subgroup of $\mathrm{SL}(2,\mb Z)$ and $\rho:\rg\to\Aut{\mf g}$ be any representation.
Assume that $\rg$ contains a finite index normal subgroup $N$ such that there exists a Lie algebra isomorphism $\aliavar{0}{\mf g}{N}{\rho}\cong \mf g \otimes_{\mb C} \mfk[N]{0}$ and consider $G=\rg/N.$ Then $\alia$ is isomorphic to the corresponding twisted current algebra
$(\mf g \otimes_{\mb C} \mfk[N]{0})^G$  and thus all its finite-dimensional irreducible representations are evaluation representations in the sense of Lau. 

In particular, this is true for all congruence subgroups $\rg \subset \mathrm{SL}(2,\mb Z)$ and restricted representations $\rho$.
\end{Theorem}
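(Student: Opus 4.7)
The plan is to realise $\alia$ as the $G$-fixed subalgebra of $\aliavar{0}{\mf g}{N}{\rho}$, transport this presentation across the hypothesised isomorphism to $\mf g \otimes_{\mb C} \mfk[N]{0}$, and then invoke Lau's classification. First I would define an action of $\rg$ on $\aliavar{0}{\mf g}{N}{\rho}$ by
\[
(\gamma \cdot f)(\tau) := \rho(\gamma)\, f(\gamma^{-1}\tau), \qquad \gamma\in\rg.
\]
Since $N$ is normal in $\rg$, the equivariance relation for a given $f\in\aliavar{0}{\mf g}{N}{\rho}$ is preserved under this action (the check reduces to $\gamma^{-1}N\gamma=N$), and the weight-zero equivariance of $f$ forces $n\cdot f=f$ for every $n\in N$, so the action descends to $G=\rg/N$. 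As $\rg$ permutes the cusps of $N$ and $\rho(\gamma)$ is a fixed linear operator, each $\gamma\cdot$ preserves the exponential growth class at cusps and is a Lie algebra automorphism (the Lie bracket being pointwise). The fixed points are precisely those $f$ satisfying the full $\rg$-equivariance, giving $\aliavar{0}{\mf g}{N}{\rho}^{G}=\alia$.

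Next, transporting this $G$-action across the hypothesised isomorphism $\Psi:\aliavar{0}{\mf g}{N}{\rho}\xrightarrow{\sim} \mf g \otimes_{\mb C} \mfk[N]{0}$ yields a finite group of Lie algebra automorphisms of $\mf g \otimes_{\mb C} \mfk[N]{0}$. Since $\mfk[N]{0}$ is the coordinate ring of the smooth affine curve $Y(N)=N\backslash\uh$ (hence reduced and finitely generated) and $G$ is finite, the fixed subalgebra
\[
\alia \;\cong\; \left(\mf g \otimes_{\mb C} \mfk[N]{0}\right)^{G}
\]
is a twisted current algebra in Lau's sense. Lau's theorem on the representations of twisted current algebras \cite{lau2014representations} then applies directly, yielding the evaluation representation description.

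For the final ``in particular'' assertion, let $\rg$ be an arbitrary congruence subgroup and pick $M\in\mb{N}$ with $3\mid M$ and $\Gamma(M)\subset\rg$; such $M$ is obtained by taking any inclusion $\Gamma(M_0)\subset\rg$ and passing to $\Gamma(3M_0)\subset\Gamma(M_0)$. Then $N:=\Gamma(M)=\Gamma(3k)$ is normal in $\SLNZ[2]$, hence in $\rg$, and the theorem on $\Gamma(3k)$ proved earlier supplies the required isomorphism $\aliavar{0}{\mf g}{N}{\rho}\cong \mf g \otimes_{\mb C}\mfk[N]{0}$, completing the reduction to the general statement.

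The main technical obstacle I expect is the bookkeeping in the first step: verifying that $(\gamma\cdot f)(\tau)=\rho(\gamma)f(\gamma^{-1}\tau)$ is well-defined (normality of $N$), descends to $G$, and respects the exponential growth at every cusp of $N$. Once this is in place, the transport step is formal, and applying Lau's theorem is immediate, since his framework permits an arbitrary finite group of Lie algebra automorphisms of $\mf g\otimes_{\mb C}A$ without requiring a diagonal tensor structure. A secondary point is that Lau works with simple $\mf g$ whereas the hypothesis is only semisimple, but one may decompose $\mf g$ into its simple ideals and let $G$ act by permuting them, reducing to Lau's situation on each $G$-orbit.
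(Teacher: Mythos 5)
Your proposal is correct and follows essentially the same route as the paper: realise $\alia$ as the $G$-fixed subalgebra of $\aliavar{0}{\mf g}{N}{\rho}$ (your explicit action $(\gamma\cdot f)(\tau)=\rho(\gamma)f(\gamma^{-1}\tau)$ and the normality check are exactly the details the paper leaves implicit), transport across the hypothesised isomorphism to exhibit $(\mf g\otimes_{\mb C}\mfk[N]{0})^G$ as a twisted current algebra, apply Lau's classification, and settle the congruence-subgroup case by passing to a normal subgroup $\Gamma(3k)$. The only cosmetic difference is that the paper invokes averaging over $G$ where you identify the fixed points directly; both give $\aliavar{0}{\mf g}{N}{\rho}^{G}=\alia$.
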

\begin{proof}The group $\rg$ acts on the weakly holomorphic maps $\uh\to\mf{g}$. Restricted to $\aliavar{0}{\mf g}{N}{\rho}$, this action descends to an action of $G=\rg/N$.
Averaging over this finite group shows that
$\aliavar{0}{\mf g}{N}{\rho}^{G}=\alia$. 
If we consider the action of $G$ on $\mf g \otimes_{\mb C} \mfk[N]{0}$ defined by the Lie algebra isomorphism $\aliavar{0}{\mf g}{N}{\rho}\cong \mf g \otimes_{\mb C} \mfk[N]{0}$ and the action of $G$ on $\aliavar{0}{\mf g}{N}{\rho}$ we obtain
\beq{isoL}
 \alia\cong (\mf g \otimes_{\mb C} \mfk[N]{0})^G.
 \eeq
Since $G$ is finite, we see that $\alia$ is isomorphic to a twisted current algebra, so we can apply the main result of Lau \cite{lau2014representations} to deduce that all finite-dimensional irreducible representations of $\alia$ are the evaluation representations, which Lau defined for all twisted current algebras.

The last claim now follows from Theorem 5.4 since any congruence subgroup by definition contains some principal congruence subgroup $\Gamma(k)$, and thus a normal subgroup $\Gamma(3k)$ for some $k \in \mathbb N.$
\end{proof}

In the case of automorphic Lie algebras $\aliapz$ it is natural to respect the grading. For the restricted representations we proved an isomorphism of
$\mb{Z}$-graded Lie algebras
$$
\modaut[]{}: \mf{g} \otimes_\mathbb C \mfpz \to \aliapz 
$$
where the grading of $\mf{g}$ in the left hand side is induced from $\rho.$

Let $\mu_i:\mfpz\to\End(W_i)$, $i=1,\ldots,n$, be $\mb{Z}$-graded representations of the commutative algebra $\mfpz$. Particular examples of such modules $W_i$ are quotients of $\mfpz$ by homogeneous ideals.
Let $\psi_i: \mathfrak g \to \End(V_i)$, $i=1,\ldots,n$, be $\mb{Z}$-graded Lie algebra representations.
Define the corresponding $\mb{Z}$-graded evaluation representation
\beq{eval2}
Ev_{\mu,\psi}=ev_{\mu,\psi} \circ \modaut[]{}^{-1}: \aliapz \to \End(\bigotimes_{i=1}^n V_i\otimes_\mathbb C W_i),
\eeq
where $ev_{\mu,\psi}: \mf{g} \otimes_\mathbb C \mfpz \to \End(\bigotimes_{i=1}^n V_i\otimes_\mathbb C W_i)$ is defined by
\beq{eval3}
ev_{\mu,\psi}: x \otimes \alpha \mapsto \sum_{i=1}^n \Id\otimes\ldots\otimes\Id\otimes \psi_i(x) \otimes \mu_i(\alpha)\otimes\Id\otimes\ldots\otimes\Id
\eeq
for $x\in \mathfrak g$ and $\alpha \in \mfpz$.

It would be interesting to study these representations and their role in the representation theory of automorphic Lie algebras in more detail.

\section{Concluding Remarks}
\label{sec:conclusions}

We have found a complete description of the automorphic Lie algebras on the upper half plane for representations $\rho:\rg\rightarrow\Aut{\mf{g}}$ factoring through $\SLNC[2]$.
The case of other representations $\rho$ is worthy to be  studied, and results are feasible due to the many strong papers from the last decade on vector-valued modular forms on the full modular group with an arbitrary representation. 

A particularly interesting class is given by Weil representations of $\mg$  representations from the theory of Jacobi modular forms, which can be naturally interpreted as the corresponding vector-valued modular forms  (see Eichler and Zagier \cite[Theorem 5.1]{eichler1985the}).

There is a well known Poisson structure on the algebra of modular forms that goes by the name of the first Rankin-Cohen bracket. This is an example of transvection, a tool in classical invariant theory to construct invariants, developed in the 19th century by Aronhold, Clebsch, Gordan and collaborators (see \cite{olver1999classical}). Using the complete sequence of transvectants one can construct a one-parameter family of associative products on the algebra of modular forms, known as the star- or Moyal product \cite[Proposition 5.20]{olver1999classical} (in fact, there exists a two-parameter family of deformations of the associative product, see \cite[p. 57]{bruinier2008the}).
The odd part of this product is known as the Moyal bracket. It is a deformation of the Poisson structure defined by the first transvectant. All of this structure can be generalised from scalar modular forms to vector-valued modular forms due to the fact that transvection can be done with vectors as well \cite{lee2007vector}. This will produce a one-parameter family of Lie brackets on the space of vector-valued modular forms $\vvmfpz$ if $V$ has the structure of an associative commutative algebra and $\rho:\rg\to\Aut{V}$. It would be interesting to explore this direction further in relation to Kontsevich's quantisation \cite{kontsevich2003deformation} (cf. \cite{pevzner2018a} and references therein).

We have also a link with the important work of Krichever and Novikov \cite{krichever1987algebras,krichever1987algebras2}, 
who studied the analogues of Virasoro algebras on Riemann surfaces $X$ with two punctures. The usual Virasoro algebra corresponds to the genus zero case. To define the extension of the Lie algebra of the vector fields in higher genus case, one needs a projective connection on $X$. In the modular case $X=\Gamma\backslash\uh$ we have a remarkable connection (going back to Ramanujan) given by the Serre derivatives
(\ref{ser}),
which work for any finite index subgroup $\Gamma$ of modular group. The corresponding multi-point generalisations of Krichever-Novikov algebras are worthy to be studied in more details (cf. \cite{schlichenmaier1990krichever,schlichenmaier2021krichever}).
  This may lead also to the new explicit solutions of the integrable nonlinear equation (like the famous KP equation), via original Krichever's construction \cite{krichever1984the},  
who was the first to use  the forms in algebro-geometric integration theory. 
Applications of automorphic Lie algebras on compact Riemann surfaces in the context of the theory of integrable systems have been considered in the past (e.g. in \cite{lombardo2004reductions}), and have been more recently discussed by Bury and Mikhailov in \cite{bury2021automorphic}.

\vspace{5mm}
{\bf Acknowledgement}
\vspace{1mm}
\\We are grateful to Fabien Cl\'{e}ry, Jenya Ferapontov, Anton Khoroshkin, Vladimir Novikov, Casper Oelen, Jan Sanders, Oleg Sheinman and Wadim Zudilin for very helpful and stimulating discussions.

The work of V.K. and S.L. was supported by the Engineering and Physical Sciences Research Council (EPSRC) grant no. EP/V019252/1. The work of A.P.V. was partially supported by the Russian Science Foundation grant no. 20-11-20214. 

\def\cprime{$'$}
\providecommand{\bysame}{\leavevmode\hbox to3em{\hrulefill}\thinspace}
\providecommand{\MR}{\relax\ifhmode\unskip\space\fi MR }
\providecommand{\MRhref}[2]{%
  \href{http://www.ams.org/mathscinet-getitem?mr=#1}{#2}
}
\providecommand{\href}[2]{#2}


\begin{thebibliography}{10}

\bibitem{aigner2013markov}
 Aigner, M. \emph{Markov's Theorem and 100 years of the Uniqueness
 Conjecture}, Springer, Cham, 2013, A mathematical journey from irrational
  numbers to perfect matchings. \MR{3098784}

\bibitem{auger2019extensions}
Auger, J. and Lau, M. \emph{Extensions of modules for twisted current
  algebras}, J. Algebra \textbf{526} (2019), 356--381. \MR{3920907}

\bibitem{bannai2001some}
Bannai, E.,  Koike, M., Munemasa, A., and Sekiguchi, J.,\emph{Some
  results on modular forms---subgroups of the modular group whose ring of
  modular forms is a polynomial ring}, Groups and combinatorics---in memory of
  {M}ichio {S}uzuki, Adv. Stud. Pure Math., vol.~32, Math. Soc. Japan, Tokyo,
  2001, pp.~245--254. \MR{1893493}

\bibitem{bremner1994generalized}
Bremner, M., \emph{Generalized affine {K}ac-{M}oody {L}ie algebras over
  localizations of the polynomial ring in one variable}, Canad. Math. Bull.
  \textbf{37} (1994), no.~1, 21--28. \MR{1261553}

\bibitem{bremner1994universal}
\bysame, \emph{Universal central extensions of elliptic affine {L}ie algebras},
  J. Math. Phys. \textbf{35} (1994), no.~12, 6685--6692. \MR{1303073}

\bibitem{bremner1995four}
\bysame, \emph{Four-point affine {L}ie algebras}, Proc. Amer. Math. Soc.
  \textbf{123} (1995), no.~7, 1981--1989. \MR{1249871}

\bibitem{bruinier2008the}
Bruinier, J.H., van~der Geer, G., Harder, G., and Zagier, D.,
  \emph{The 1-2-3 of modular forms}, Universitext, Springer-Verlag, Berlin,
  2008.

\bibitem{bukhshtaber2019conway}
Buchstaber, V.M. and Veselov, A.P., \emph{Conway topograph, {${\rm PGL}_2(\Bbb
  Z)$}-dynamics and two-valued groups}, Uspekhi Mat. Nauk \textbf{74} (2019),
  no.~3(447), 17--62 (in Russian). English transl.: Russian Math. Surveys, {\bf 74 (3)} (2019), 387-430. \MR{3954327}

\bibitem{bury2021automorphic}
Bury, R.T. and  Mikhailov, A.V., \emph{Automorphic {L}ie algebras and
  corresponding integrable systems}, Differential Geom. Appl. \textbf{74}
  (2021), 101710, 25. \MR{4187211}

\bibitem{candelori2019vector}
Candelori, L. and Franc, C., \emph{Vector bundles and modular forms for
  {F}uchsian groups of genus zero}, Commun. Number Theory Phys. \textbf{13}
  (2019), no.~3, 487--528. \MR{4013728}

\bibitem{chari1986integrable}
 Chari, V., \emph{Integrable representations of affine {L}ie-algebras},
  Invent. Math. \textbf{85} (1986), no.~2, 317--335. \MR{846931}

\bibitem{chari1988integrable}
Chari, V. and Pressley, A., \emph{Integrable representations of
  twisted affine {L}ie algebras}, J. Algebra \textbf{113} (1988), no.~2,
  438--464. \MR{929772}

\bibitem{cohn1955approach}
Cohn, H., \emph{Approach to {M}arkoff's minimal forms through modular
  functions}, Ann. of Math. (2) \textbf{61} (1955), 1--12. \MR{67935}

\bibitem{collingwood1993nilpotent}
Collingwood, D.H. and McGovern, W.M. \emph{Nilpotent orbits in
  semisimple {L}ie algebras}, Van Nostrand Reinhold Mathematics Series, Van
  Nostrand Reinhold Co., New York, 1993. \MR{1251060}

\bibitem{conway1978functions}
Conway, J.B., \emph{Functions of one complex variable}, second ed., Graduate
  Texts in Mathematics, vol.~11, Springer-Verlag, New York-Berlin, 1978.
  \MR{503901}

\bibitem{dolgachev2005lectures}
Dolgachev, I., \emph{Lectures on modular forms}, Lecture Notes (2005). Available at: \\
https://dept.math.lsa.umich.edu/~lagarias//678books/dolgachev-modular98.pdf

\bibitem{duffield2020wild}
Duffield, D., Knibbeler, V. and Lombardo, S., \emph{Wild {L}ocal
  {S}tructures of {A}utomorphic {L}ie {A}lgebras}, arXiv preprint
  arXiv:2010.14264 (2020).

\bibitem{duke2008on}
Duke W., and Jenkins, P., \emph{On the zeros and coefficients of certain weakly
  holomorphic modular forms}, Pure Appl. Math. Q. \textbf{4} (2008), no.~4,
  Special Issue: In honor of Jean-Pierre Serre. Part 1, 1327--1340.
  \MR{2441704}

\bibitem{ebeling2013lattices}
Ebeling, W., \emph{Lattices and codes}, third ed., Advanced Lectures in
  Mathematics, Springer Spektrum, Wiesbaden, 2013, A course partially based on
  lectures by Friedrich Hirzebruch. \MR{2977354}

\bibitem{eichler1985the}
Eichler M. and Zagier, D., \emph{The theory of {J}acobi forms}, Progress in
  Mathematics, vol.~55, Birkh\"{a}user Boston, Inc., Boston, MA, 1985.
  \MR{781735}

\bibitem{ferapontov2010integrable}
Ferapontov E.V. and Odesskii, A.V., \emph{Integrable {L}agrangians and modular
  forms}, J. Geom. Phys. \textbf{60} (2010), no.~6-8, 896--906. \MR{2647291}

\bibitem{franc2018on}
Franc, C. and Mason, G., \emph{On the structure of modules of
  vector-valued modular forms}, Ramanujan J. \textbf{47} (2018), no.~1,
  117--139. \MR{3857938}

\bibitem{gannon2014the}
Gannon, T., \emph{The theory of vector-valued modular forms for the modular
  group}, Conformal field theory, automorphic forms and related topics,
  Contrib. Math. Comput. Sci., vol.~8, Springer, Heidelberg, 2014,
  pp.~247--286. \MR{3559207}

\bibitem{gorshkov1981geometry}
Gorshkov, D.S., \emph{Geometry of Lobachevskii in connection with certain
  questions of arithmetic}, PhD Thesis, 1953 (in Russian). Zap. Nauch. sem. LOMI {\bf 67} (1977), 39-85. English transl. in 
Journal of Soviet Mathematics \textbf{16} (1981),
  no.~1, 788--820.

\bibitem{gottesman2020the}
Gottesman, R., \emph{The module of vector-valued modular forms is
  {C}ohen-{M}acaulay}, Czechoslovak Math. J. \textbf{70(145)} (2020), no.~4,
  1211--1218. \MR{4181810}

\bibitem{gunning1962lectures}
Gunning, R.C., \emph{Lectures on Modular Forms}, Notes by Armand Brumer. Annals
  of Mathematics Studies, No. 48, Princeton University Press, Princeton, N.J.,
  1962. \MR{0132828}

\bibitem{haas1986diophantine}
Haas, A., \emph{Diophantine approximation on hyperbolic {R}iemann surfaces},
  Acta Math. \textbf{156} (1986), no.~1-2, 33--82. \MR{822330}

\bibitem{hartwig2007tetrahedron}
Hartwig B. and Terwilliger, P., \emph{The tetrahedron algebra, the
  {O}nsager algebra, and the {$\germ{sl}_2$} loop algebra}, J. Algebra
  \textbf{308} (2007), no.~2, 840--863. \MR{2295093}

\bibitem{humphreys1972introduction}
Humphreys, J.E., \emph{Introduction to {L}ie algebras and Representation
  Theory}, Springer-Verlag, New York-Berlin, 1972, Graduate Texts in
  Mathematics, Vol. 9. \MR{0323842}

\bibitem{kac1968simple}
Kac, V.G., \emph{Simple irreducible graded {L}ie algebras of finite growth},
  Izv. Akad. Nauk SSSR Ser. Mat. \textbf{32} (1968), 1323--1367. \MR{0259961}

\bibitem{kac1969automorphisms}
\bysame, \emph{Automorphisms of finite order of semisimple {L}ie algebras},
  Funkcional. Anal. i Prilo\v zen. \textbf{3} (1969), no.~3, 94--96.
  \MR{0251091}

\bibitem{kac1990infinite}
\bysame, \emph{Infinite-dimensional {L}ie Algebras}, third ed., Cambridge
  University Press, Cambridge, 1990. \MR{1104219}

\bibitem{klein1966vorlesungenII}
Klein, F., \emph{Vorlesungen \"{u}ber die {T}heorie der Elliptischen
  {M}odulfunktionen. {B}and {I}: {G}rundlegung der {T}heorie}, Bibliotheca
  Mathematica Teubneriana, Band 10, Johnson Reprint Corp., New York; B. G.
  Teubner Verlagsgesellschaft, Stuttgart, 1966, Ausgearbeitet und
  vervollst\"{a}ndigt von Robert Fricke, Nachdruck der ersten Auflage.
  \MR{0247996}

\bibitem{klein1966vorlesungenI}
\bysame, \emph{Vorlesungen \"{u}ber die {T}heorie der Elliptischen
  {M}odulfunktionen. {B}and {II}: {F}ortbildung und {A}nwendung der {T}heorie},
  Bibliotheca Mathematica Teubneriana, Band 11, Johnson Reprint Corp., New
  York; B. G. Teubner Verlagsgesellschaft, Stuttgart, 1966, Ausgearbeitet und
  vervollst\"{a}ndigt von Robert Fricke, Nachdruck der ersten Auflage.
  \MR{0247997}

\bibitem{klein1993vorlesungen}
\bysame, \emph{Vorlesungen \"uber das {I}kosaeder und die {A}ufl\"osung der
  {G}leichungen vom f\"unften {G}rade}, Birkh\"auser Verlag, Basel; B. G.
  Teubner, Stuttgart, 1993, Reprint of the 1884 original, Edited, with an
  introduction and commentary by Peter Slodowy. \MR{1315530}

\bibitem{knapp2002lie}
Knapp, A.W., \emph{Lie groups beyond an introduction}, second ed.,
  Progress in Mathematics, vol. 140, Birkh\"auser Boston, Inc., Boston, MA,
  2002. \MR{1920389}

\bibitem{knibbeler2020cohomology}
Knibbeler, V., Lombardo, S., and Sanders, J.A., \emph{{A}utomorphic {L}ie
  {A}lgebras and {C}ohomology of {R}oot {S}ystems}, J. Lie Theory \textbf{30}
  (2020), no.~1, 059--084.

\bibitem{knibbeler2017higher}
\bysame, \emph{Higher-dimensional
  automorphic {L}ie algebras}, Found. Comput. Math. \textbf{17} (2017), no.~4,
  987--1035. \MR{3682219}

\bibitem{knopp2011logarithmic}
Knopp, M. and Mason, G., \emph{Logarithmic vector-valued modular
  forms}, Acta Arith. \textbf{147} (2011), no.~3, 261--262. \MR{2773205}

\bibitem{knopp2013vector}
\bysame, \emph{Vector-valued modular forms with an unnatural boundary}, From
  {F}ourier analysis and number theory to {R}adon transforms and geometry, Dev.
  Math., vol.~28, Springer, New York, 2013, pp.~341--352. \MR{2986965}

\bibitem{kontsevich2003deformation}
Kontsevich, M., \emph{Deformation quantization of {P}oisson manifolds}, Lett.
  Math. Phys. \textbf{66} (2003), no.~3, 157--216. \MR{2062626}

\bibitem{krichever1984the}
Krichever, I.M., \emph{The {L}aplace method, algebraic curves and
  nonlinear equations}, Funktsional. Anal. i Prilozhen. \textbf{18} (1984),
  no.~3, 43--56. (in Russian) English transl.: Funct. Anal. Appl. {\bf 18} (1984), 210-223. \MR{757248}
  
  \bibitem{krichever1987algebras}
Krichever, I.M.  and Novikov, S.P., \emph{Algebras of {V}irasoro type, {R}iemann surfaces and the
  structures of soliton theory}, Funkt. Anal. i Prilozhen. \textbf{21}
  (1987), no.~2, 46--63. (Russian) English transl.: Funct. Anal. Appl. {\bf 21:2} (1987), 126-142. \MR{902293}

\bibitem{krichever1987algebras2}
\bysame, \emph{Algebras of {V}irasoro type,
  {R}iemann surfaces and strings in {M}inkowski space}, Funkt. Anal. i
  Prilozhen. \textbf{21} (1987), no.~4, 47--61, 96. (in Russian) English transl.: Funct. Anal. Appl.{\bf 21:4} (1987), 294-307.\MR{925072}


\bibitem{kuga1960on}
Kuga M. and Shimura, G., \emph{On vector differential forms attached to
  automorphic forms}, J. Math. Soc. Japan \textbf{12} (1960), 258--270.
  \MR{133480}

\bibitem{landesman2016spin}
Landesman, A., Ruhm, P. and Zhang, R., \emph{Spin canonical rings of log
  stacky curves}, Ann. Inst. Fourier (Grenoble) \textbf{66} (2016), no.~6,
  2339--2383. \MR{3580174}

\bibitem{lau2010representations}
Lau, M., \emph{Representations of multiloop algebras}, Pacific J. Math.
  \textbf{245} (2010), no.~1, 167--184. \MR{2602688}

\bibitem{lau2014representations}
\bysame, \emph{Representations of twisted current algebras}, J. Pure Appl.
  Algebra \textbf{218} (2014), no.~11, 2149--2163. \MR{3209812}

\bibitem{lee2007vector}
Lee, M.H., \emph{Vector-valued {J}acobi-like forms}, Monatsh. Math.
  \textbf{152} (2007), no.~4, 321--336. \MR{2358302}

\bibitem{lombardo2004reductions}
Lombardo, S. and Mikhailov, A.V., \emph{Reductions of integrable equations:
  dihedral group}, J. Phys. A \textbf{37} (2004), no.~31, 7727--7742.
  \MR{2090010}

\bibitem{lombardo2005reduction}
\bysame, \emph{Reduction groups and automorphic {L}ie algebras}, Comm. Math.
  Phys. \textbf{258} (2005), no.~1, 179--202. \MR{2166845}

\bibitem{lombardo2010on}
Lombardo, S. and Sanders, J.A., \emph{On the classification of automorphic
  {L}ie algebras}, Comm. Math. Phys. \textbf{299} (2010), no.~3, 793--824.
  \MR{2718933}

\bibitem{markoff1880sur}
Markoff, A., \emph{Sur les formes quadratiques binaires ind\'{e}finies}, Math.
  Ann. \textbf{17} (1880), no.~3, 379--399. \MR{1510073}

\bibitem{marks2010structure}
Marks, C. and Mason, G., \emph{Structure of the module of
  vector-valued modular forms}, J. Lond. Math. Soc. (2) \textbf{82} (2010),
  no.~1, 32--48. \MR{2669639}

\bibitem{mumford1983tata}
Mumford, D., \emph{Tata Lectures on Theta. {I}}, Progress in Mathematics,
  vol.~28, Birkh\"{a}user Boston, Inc., Boston, MA, 1983, With the assistance
  of C. Musili, M. Nori, E. Previato and M. Stillman. \MR{688651}

\bibitem{neher2012irreducible}
Neher, E. Savage, A. and Senesi, P., \emph{Irreducible
  finite-dimensional representations of equivariant map algebras}, Trans. Amer.
  Math. Soc. \textbf{364} (2012), no.~5, 2619--2646. \MR{2888222}

\bibitem{newman1962the}
Newman, M., \emph{The structure of some subgroups of the modular group},
  Illinois J. Math. \textbf{6} (1962), 480--487. \MR{140583}

\bibitem{olver1999classical}
Olver, P.J.,  \emph{Classical Invariant Theory}, London Mathematical Society
  Student Texts, vol.~44, Cambridge University Press, Cambridge, 1999.
  \MR{1694364}

\bibitem{olver2000transvectants}
Olver, P.J. and Sanders, J.A., \emph{Transvectants, modular forms, and the
  {H}eisenberg algebra}, Adv. in Appl. Math. \textbf{25} (2000), no.~3,
  252--283. \MR{1783553}

\bibitem{onsager1944crystal}
Onsager, L., \emph{Crystal statistics. {I}. {A} two-dimensional model with an
  order-disorder transition}, Phys. Rev. (2) \textbf{65} (1944), 117--149.
  \MR{10315}

\bibitem{pevzner2018a}
Pevzner, M., \emph{A generating function for {R}ankin-{C}ohen brackets},
  Lett. Math. Phys. \textbf{108} (2018), no.~12, 2627--2633. \MR{3865750}

\bibitem{poincare1882memoire}
Poincare, H., \emph{M\'{e}moire sur les fonctions fuchsiennes}, Acta Math.
  \textbf{1} (1882), no.~1, 193--294. \MR{1554584}

\bibitem{ramanujan1916on}
Ramanujan, S., \emph{On certain arithmetical functions [{T}rans. {C}ambridge
  {P}hilos. {S}oc. {\bf 22} (1916), no. 9, 159--184]}, Collected papers of
  {S}rinivasa {R}amanujan, AMS Chelsea Publ., Providence, RI, 2000,
  pp.~136--162. \MR{2280861}

\bibitem{rankin1977modular}
Rankin, R.A., \emph{Modular Forms and Functions}, Cambridge University
  Press, Cambridge-New York-Melbourne, 1977. \MR{0498390}

\bibitem{roan1991onsager}
Roan, S., \emph{Onsager's algebra, loop algebra and chiral Potts model},
  Max-Planck-Institut f{\"u}r Mathematik, 1991.

\bibitem{schlichenmaier1990krichever}
Schlichenmaier, M., \emph{Krichever-{N}ovikov algebras for more than two
  points}, Lett. Math. Phys. \textbf{19} (1990), no.~2, 151--165. \MR{1039524}
  
\bibitem{schlichenmaier2021krichever}
\bysame, \emph{Krichever-Novikov type algebras: definitions and results}, 
In Algebra and Applications 1, A. Makhlouf (Ed.) (2021).


\bibitem{schultz2015notes}
Schultz, D., \emph{Notes on modular forms}, 2015. Available at: \\
https://faculty.math.illinois.edu/~schult25/ModFormNotes.pdf

\bibitem{series1985the}
Series, C., \emph{The geometry of {M}arkoff numbers}, Math. Intelligencer
  \textbf{7} (1985), no.~3, 20--29. \MR{795536}

\bibitem{serre1973congruences}
Serre, J.P., \emph{Congruences et formes modulaires [d'apr\`es {H}. {P}.
  {F}. {S}winnerton-{D}yer]}, S\'{e}minaire {B}ourbaki, 24e ann\'{e}e
  (1971/1972), {E}xp. {N}o. 416, 1973, pp.~319--338. Lecture Notes in Math.,
  Vol. 317. \MR{0466020}

\bibitem{sheinman1990elliptic}
Sheinman, O.K., \emph{Elliptic affine {L}ie algebras}, Funktsional. Anal.
  i Prilozhen. \textbf{24} (1990), no.~3, 51--61, 96. (in Russian) English transl.: Funct. Anal. Appl. {\bf 24} (1990), 210-219. \MR{1082031}

\bibitem{shimura1959sur}
Shimura, G., \emph{Sur les int\'{e}grales attach\'{e}es aux formes
  automorphes}, J. Math. Soc. Japan \textbf{11} (1959), 291--311. \MR{120372}

\bibitem{springborn2017the}
Springborn, B., \emph{The hyperbolic geometry of {M}arkov's theorem on
  {D}iophantine approximation and quadratic forms}, Enseign. Math. \textbf{63}
  (2017), no.~3-4, 333--373. \MR{3852175}

\bibitem{voight2015canonical}
Voight J. and Zureick-Brown, D., \emph{The canonical ring of a stacky
  curve}, arXiv:1501.04657 (2015).

\bibitem{wagreich1980algebras}
Wagreich, P., \emph{Algebras of automorphic forms with few generators},
  Trans. Amer. Math. Soc. \textbf{262} (1980), no.~2, 367--389. \MR{586722}

\bibitem{whittaker1996a}
Whittaker, E.T. and Watson, G.N., \emph{A Course of Modern Analysis}, Cambridge
  Mathematical Library, Cambridge University Press, Cambridge, 1996. Reprint
  of the fourth (1927) edition. \MR{1424469}

\bibitem{zemel2015on}
Zemel, S., \emph{On quasi-modular forms, almost holomorphic modular forms,
  and the vector-valued modular forms of {S}himura}, Ramanujan J. \textbf{37}
  (2015), no.~1, 165--180. \MR{3338044}

\end{thebibliography}
\end{document}